\newtheorem{thm}{Theorem}[section]
\newtheorem{lemma}[thm]{Lemma}
\newtheorem{cor}[thm]{Corollary}
\theoremstyle{definition}
\newtheorem{conj}[thm]{Conjecture}
\newtheorem{ques}[thm]{Question}
\newcommand{\R}{\mathbb{R}}
\newcommand{\Z}{\mathbb{Z}}
\renewcommand{\P}{\mathbb{P}}
\renewcommand{\S}{\mathcal{S}}
\DeclareRobustCommand{\cev}[1]{%
	\mathpalette\do@cev{#1}%
}
\newcommand{\do@cev}[2]{%
	\fix@cev{#1}{+}%
	\reflectbox{$\m@th#1\vec{\reflectbox{$\fix@cev{#1}{-}\m@th#1#2\fix@cev{#1}{+}$}}$}%
	\fix@cev{#1}{-}%
}
\newcommand{\fix@cev}[2]{%
	\ifx#1\displaystyle
	\mkern#23mu
	\else
	\ifx#1\textstyle
	\mkern#23mu
	\else
	\ifx#1\scriptstyle
	\mkern#22mu
	\else
	\mkern#22mu
	\fi
	\fi
	\fi
}
\DeclareMathOperator\diam{diam}
\newcommand{\eps}{\varepsilon}
\DeclareMathOperator{\vol}{vol}
\newcommand*\rel@kern[1]{\kern#1\dimexpr\macc@kerna}
\newcommand*\widebar[1]{%
  \begingroup
  \def\mathaccent##1##2{%
    \rel@kern{0.8}%
    \overline{\rel@kern{-0.8}\macc@nucleus\rel@kern{0.2}}%
    \rel@kern{-0.2}%
  }%
  \macc@depth\@ne
  \let\math@bgroup\@empty \let\math@egroup\macc@set@skewchar
  \mathsurround\z@ \frozen@everymath{\mathgroup\macc@group\relax}%
  \macc@set@skewchar\relax
  \let\mathaccentV\macc@nested@a
  \macc@nested@a\relax111{#1}%
  \endgroup
}
\title{On the site percolation threshold of circle packings and planar graphs}
\date{\today}
\author{Ron Peled}
\address{Ron Peled\hfill\break
	Tel Aviv University\\
	School of Mathematical Sciences\\
	Tel Aviv, 69978, Israel.}
\email{peledron@tauex.tau.ac.il}
\urladdr{http://www.math.tau.ac.il/~peledron}
\begin{document}

\begin{abstract}
A circle packing is a collection of disks with disjoint interiors in the plane. It naturally defines a graph by tangency. It is shown that there exists $p>0$ such that the following holds for every circle packing: If each disk is retained with probability $p$ independently, then the probability that there is a path of retained disks connecting the origin to infinity is zero. The following conclusions are derived using results on circle packings of planar graphs: (i) Site percolation with parameter $p$ has no infinite connected component on recurrent simple plane triangulations, or on Benjamini--Schramm limits of finite simple planar graphs. (ii) Site percolation with parameter $1-p$ has an infinite connected component on transient simple plane triangulations with bounded degree. These results lend support to recent conjectures of Benjamini. Extensions to graphs formed from the packing of shapes other than disks, in the plane and in higher dimensions, are presented. Several conjectures and open questions are discussed. 

\end{abstract}

\maketitle

\section{Introduction}\label{sec:introduction}
\emph{Site percolation with parameter $p\in[0,1]$ on a graph $G$} is the process of independently retaining each vertex in $G$ with probability $p$ and deleting it with probability $1-p$; the (random) induced subgraph on the retained vertices is denoted $G^p$. Percolation theory is concerned with the structure of the connected components of $G^p$. It forms a huge body of research in both the physics and mathematics communities; see \cites{G99, BR06, DC17} for two books and a recent review.
Possibly the most basic question in the theory is whether $G^p$ has a connected component with infinitely many vertices. Kolmogorov's zero-one law implies that the probability of this event is either zero or one for each $p$ (on locally finite graphs), and a standard coupling shows that the probability is non-decreasing in $p$. This leads to the definition of the \emph{critical probability $p_c(G)$}, defined as the infimum over all $p$ such that $G^p$ has an infinite connected component almost surely.

The critical probability of the triangular lattice is exactly $1/2$, but for most lattices (and more general graphs) the critical probability is not predicted to have an explicit expression; simulations suggest that $p_c(\Z^2)\approx0.59$. The critical probability of general graphs, or even planar graphs, may be arbitrarily small, as evidenced by the fact that the critical probability of the $d$-regular tree is $\frac{1}{d-1}$. The lower bound $p_c(G)\ge \frac{1}{\Delta(G)-1}$ holds for graphs $G$ with finite maximal degree $\Delta(G)$, as follows from a simple union bound or by coupling the percolation on the graph with a percolation on the regular tree. For which general classes of graphs can this lower bound be improved? Motivated by ideas of coarse conformal uniformization,
Benjamini~\cite{B2018} recently made two conjectures on the behavior of site percolation with parameter $p=1/2$ on large classes of planar triangulations (Conjecture~\ref{conj:transient triangulations} and Conjecture~\ref{conj:square tiling} below). The first conjecture relates the behavior to the recurrence properties of the triangulation (a connected graph is called \emph{recurrent} if simple random walk on it returns to its starting vertex infinitely often, almost surely) while the second discusses connectivity probabilities in specific embeddings.

The present work is motivated by Benjamini's conjectures, as well as by applications to the study of planar loop models~\cite{CGHP20}. We prove a (positive) uniform lower bound on the critical probability of locally finite planar graphs which can be represented as the tangency graph of a circle packing with at most countably many accumulation points (Corollary~\ref{cor:circle packing accumulation points}). This class of graphs includes \emph{all recurrent simple plane triangulations} as well as \emph{all Benjamini--Schramm limits of finite planar graphs}. A uniform \emph{upper bound} on the critical probability is further obtained for \emph{transient} simple plane triangulations with bounded degrees. In each of these cases, the fact that the critical probability cannot be arbitrarily close to $0$, or arbitrarily close to $1$ in the latter case, was not known before. The results lend support to Benjamini's conjectures, verifying their analogues when the percolation probability 1/2 is replaced by a different universal constant (close to $0$ or close to $1$, according to context).

Our results are based on the following statement (Theorem~\ref{thm:percolation for circle packing}): There exists $p>0$ such that for \emph{all} circle packings in the plane, after retaining each disk with probability $p$ and deleting it with probability $1-p$, there is no path of retained disks connecting the origin to infinity, almost surely. In the spirit of Benjamini's conjectures, we conjecture that one may in fact take $p=1/2$ in this statement (see Section~\ref{sec:discussion and open questions}). Similar statements are obtained for packings of general shapes, in dimension two or higher, satisfying a regularity assumption.

%
%
%
%
%
%
%

\section{Results}

\subsection{Circle packings}\label{sec:circle packings}
A \emph{circle packing} is a collection of closed (geometric) disks in $\R^2$ having positive radii (possibly changing from disk to disk) and disjoint interiors. A circle packing $\S$ naturally defines a graph $G_\S$ with vertex set $\S$ by declaring disks \emph{adjacent} when they are tangent. We also write that a graph \emph{$G$ is represented by $\S$} if $G = G_\S$. We note that a circle packing may have \emph{accumulation points} - points in $\R^2$ with infinitely many disks of the packing intersecting each of their $\R^2$-neighborhoods. In addition, a disk may be tangent to infinitely many other disks.

As before, we denote by $G_\S^p$ the (random) induced subgraph on retained disks in a site percolation process on $G_\S$ with parameter $p$. The retained disks are termed \emph{open} and the non-retained ones, \emph{closed}. Given $s_0, s_1\in\S$ write $s_0\xleftrightarrow{\S, p} s_1$ for the event that $s_0$ and $s_1$ are connected in $G_\S^p$, i.e., that there is a finite path of open disks between them (in particular, $s_0$ and $s_1$ need to be open). Define the distance between $s_0$ and $s_1$ by
\begin{equation}\label{eq:distance between sets}
  d(s_0, s_1):=\min\{\|x-y\|_\infty\colon x\in s_0, y\in s_1\}
\end{equation}
where $\|\cdot\|_\infty$ denotes $\ell_\infty$ distance; we emphasize that this distance is measured \emph{in the ambient space $\R^2$} rather than in the graph~$G_\S$. Given $r>0$ and $s_0\in \S$ define the event that $s_0$ is connected by open disks to some $s\in\S$ at distance at least $r$ from it,
\begin{equation}\label{eq:connectivity event}
  E_{\S, p}(s_0, r) := \{\exists s\in\S\text{ satisfying }d(s_0, s)\ge r\text{ and }s_0\xleftrightarrow{\S, p} s\}.
\end{equation}
We also define the event that $s_0$ is connected to infinity; precisely, let $E_{\S, p}(s_0, \infty)$ be the event that $s_0$ is open and there is a sequence of open $s_1,s_2,\ldots$ in $\S$ with $s_n$ adjacent to $s_{n+1}$ in $G_\S$ for $n\ge 0$ and with $d(s_0, s_n)\to\infty$ as $n\to\infty$ (see Section~\ref{sec:other connectivity notions} for other connectivity notions). Lastly, let $\diam(s)$ be the diameter of a disk $s$.
The following is our main result.

\begin{thm}\label{thm:percolation for circle packing}
  There exists $p>0$ such that the following holds: Let $\S$ be a circle packing and $s_0\in \S$. Then
  \begin{equation}\label{eq:no open path to infinity circle packing}
    \P(E_{\S, p}(s_0, \infty))=0.
  \end{equation}
  Moreover, if $D:=\sup_{s\in\S}\diam(s)<\infty$ then for each $r>0$,
  \begin{equation}\label{eq:bounded above squares circle packing}
    \P(E_{\S, p}(s_0, r))\le e^{-\frac{r}{D}}.
  \end{equation}
\end{thm}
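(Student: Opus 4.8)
The plan is to bound the event probability by a first-moment (union bound) over self-avoiding tangency paths and to tame the unbounded degrees of $G_\S$ through a geometric disjointness estimate. Fix $s_0$ and write $\rho(s)=\tfrac12\diam(s)$ for the radius of a disk $s$. If $E_{\S,p}(s_0,r)$ occurs then there is an open self-avoiding path $\gamma=(s_0,s_1,\dots,s_n)$ of pairwise tangent disks with $\dist(s_0,s_n)\ge r$, so
\begin{equation*}
  \P(E_{\S,p}(s_0,r))\le \sum_{\gamma}p^{\,|\gamma|},
\end{equation*}
the sum running over all such paths, where $|\gamma|=n+1$ counts the disks. Everything reduces to showing this sum is at most $e^{-r/D}$ for a small universal $p$. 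The infinite-path statement then follows because $E_{\S,p}(s_0,\infty)\subseteq E_{\S,p}(s_0,r)$ for every $r$, so it suffices that the first-moment sum tends to $0$ as $r\to\infty$; this holds even without a diameter bound, since the geometric-progress estimate below still forces paths reaching distance $r$ to be long.

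First I would record the geometric progress estimate responsible for the exponent $r/D$. Consecutive tangent disks have centres at Euclidean distance $\rho(s_i)+\rho(s_{i+1})\le D$, so the polygonal path through the centres has length at least the net $\ell_\infty$-displacement, which is at least $r-D$. Hence a path reaching distance $r$ uses at least of order $r/D$ disks, and more precisely $\sum_i \diam(s_i)\gtrsim r$. Thus in the sum above every contributing $\gamma$ has $|\gamma|\ge r/D-1$, which is what will ultimately be exponentiated.

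The heart of the matter is to control the unbounded degree of $G_\S$: a single disk may be tangent to arbitrarily many others, so the naive count of paths of a given length is infinite. Here I would use the disjointness of the disks through the following angular estimate. Viewed from the centre of a disk $s$, a tangent disk $s'$ subtends a cone of half-angle $\arcsin\!\big(\rho(s')/(\rho(s)+\rho(s'))\big)$, and for disjoint disks all tangent to $s$ these cones have disjoint interiors; summing their angular widths around the circle gives, for every $s$,
\begin{equation*}
  \sum_{s'\ \text{tangent to}\ s}\frac{\rho(s')}{\rho(s)+\rho(s')}\le C_0
\end{equation*}
for a universal constant $C_0$. Equivalently, the kernel $K(s,s')=\rho(s')/(\rho(s)+\rho(s'))$ satisfies $\sum_{s'}K(s,s')\le C_0$, so $C_0^{-1}K$ is sub-stochastic and iterating it bounds the weighted path sum $\sum_{|\gamma|=n}\prod_i K(s_i,s_{i+1})\le C_0^{\,n}$. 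The estimate controls the many small neighbours (which are angularly thin) while at most a bounded number of neighbours are as large as $s$.

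It then remains to convert the weighted count into a bound on the flat sum $\sum_\gamma p^{|\gamma|}$, and this reconciliation is the step I expect to be the main obstacle. The tension is that $K(s,s')$ is close to $1$ when the path steps to a much larger disk but tiny when it steps to a much smaller one, so $\prod_i K(s_i,s_{i+1})$ and $p^{|\gamma|}$ weight paths very differently across scales. I would resolve this by a dyadic decomposition according to disk size: within a single scale the tangency degree is bounded, since only $O(1)$ disjoint disks of comparable size can touch a given disk, so same-scale counting is governed by a universal branching constant, while moves between scales are charged against the angular budget $C_0$ above and against the geometric progress of the second step (steps to smaller disks make little progress and so must be numerous, which the factor $p^{|\gamma|}$ penalises). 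Combining the within-scale exponential count, the between-scale bound from the angular estimate, and the length lower bound $|\gamma|\ge r/D-1$, one obtains $\sum_\gamma p^{|\gamma|}\le (Cp)^{\,r/D}$ for a universal $C$; choosing $p$ small enough that $Cp\le e^{-1}$ yields the claimed bound $e^{-r/D}$ and, in particular, $\P(E_{\S,p}(s_0,\infty))=0$.
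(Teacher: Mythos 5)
Your key lemma is false, and the overall first-moment strategy cannot be repaired by it. The claim that the cones subtended (from the centre of $s$) by disjoint disks tangent to $s$ have disjoint interiors is incorrect: a large disk tangent to $s$ near a point $P$ can ``arch over'' a much smaller disk tangent to $s$ at $P$ without intersecting it, so its angular shadow contains that of the small disk. Worse, the conclusion $\sum_{s'\,\text{tangent to}\,s}\rho(s')/(\rho(s)+\rho(s'))\le C_0$ is itself false: taking the Ford circles (disjoint circles of radius $1/(2q^2)$ tangent to the real line at the rationals $p/q$) and applying an inversion carrying the line to a circle produces infinitely many disks with disjoint interiors, all tangent to a fixed disk $s$ and with radii at most $\rho(s)$, whose radii have divergent sum; hence $\sum_{s'}K(s,s')=\infty$ for this packing. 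Since this estimate is the sole mechanism you propose for taming the unbounded degree of $G_\S$, the argument collapses at its central point. Relatedly, the flat first moment $\sum_\gamma p^{|\gamma|}$ is genuinely infinite for such packings (infinitely many small neighbours of $s_0$, each the start of a chain reaching distance $r$, each contributing at least $p^{O(r/D)}$), so no choice of $p$ makes the union bound over paths finite; and your claim that the statement for $r=\infty$ follows without a diameter bound because paths reaching distance $r$ must be long is also false --- with unbounded diameters a two-disk path reaches any distance.

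The paper's proof uses a different mechanism that you would need to import. It runs a double induction over the dyadic scale $k$ of the diameters and over the distance $r$, bounding $\alpha(k,r)=\sup\P(E_{\S,p}(s_0,r))$ over all packings with diameters in $[1,2^k]$. The event is decomposed according to the first disk of the top dyadic scale visited: reaching (a neighbour of) such a disk $s$ is controlled by the induction hypothesis at scale $k-1$, continuing from $s$ to distance $r$ is controlled by the induction hypothesis at smaller $r$, and the two are combined via the van den Berg--Kesten inequality; crucially, the \emph{number} of large disks at distance $\approx m2^k$ from $s_0$ is only $O(m)$ by an area (packing) argument, which is what replaces your (false) degree bound. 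The unbounded-diameter case of \eqref{eq:no open path to infinity circle packing} is then handled by a separate fixed-point argument showing $\alpha\le(1-\delta)\alpha$ for $\alpha=\sup_{(\S,s_0)}\P(E_{\S,p}(s_0,\infty))$. Your geometric-progress observation (a path of diameter-$\le D$ disks reaching distance $r$ has length $\gtrsim r/D$) is correct and is implicitly present in the paper's base case, but it is the least problematic part of the proof.
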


We emphasize that the theorem excludes only the existence of an open path to \emph{infinity} in the percolation process on $\S$. If $\S$ has accumulation points, it may still happen that $G_{\S}^p$ has an infinite connected component. Indeed, for each $p>0$ there are circle packings for which this is the case, with probability one --- e.g., circle packings of regular trees of sufficiently high degree. Another example is when $\S$ has a disk $s$ which is tangent to infinitely many other disks, in which case $G_{\S}^p$, for any $p>0$, has an infinite connected component with positive probability (with probability one on the event that $s$ is open). Infinite connected components of $G_\S^p$ are excluded, however, when $\S$ has at most countably many accumulation points and $\S$ is \emph{locally finite} in the sense that each disk of $\S$ is tangent to only finitely many other disks.
\begin{cor}\label{cor:circle packing accumulation points}
  Let $p>0$ be the constant from Theorem~\ref{thm:percolation for circle packing}. Let $\S$ be a locally finite circle packing with at most countably many accumulation points in $\R^2$. Then the probability that $G_\S^p$ has an infinite connected component is zero.
\end{cor}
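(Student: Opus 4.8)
The plan is to show that almost surely there is no infinite component by reducing every infinite component to a connectivity-to-infinity event, to which Theorem~\ref{thm:percolation for circle packing} applies either directly or after a M\"obius transformation. First I record two preliminary facts. The packing $\S$ is countable, since each disk contains a point of $\mathbb{Q}^2$ in its interior and the interiors are disjoint, so $\S$ injects into $\mathbb{Q}^2$. Writing $A\subseteq\R^2$ for the set of accumulation points of $\S$ and $A^\ast:=A\cup\{\infty\}$, a countable subset of the sphere $\widehat{\R}^2=\R^2\cup\{\infty\}$, the heart of the matter is the following structural claim: \emph{every infinite connected component $\cC$ of $G_\S^p$ contains an adjacent sequence of open disks $u_0,u_1,u_2,\dots$ (i.e.\ $u_n$ adjacent to $u_{n+1}$ for all $n$) which converges, in the spherical metric of $\widehat{\R}^2$, to a single point $a\in A^\ast$.}

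To prove the structural claim I would argue as follows. Since $\S$ is locally finite, $\cC$ is an infinite, connected, locally finite graph, so by K\"onig's lemma it contains a ray (a one-way infinite simple path) $u_0,u_1,u_2,\dots$; all of its disks are open because $\cC\subseteq G_\S^p$. Choose $x_n\in u_n$. Only finitely many disks of $\S$ can have spherical diameter exceeding a fixed $\eps>0$, because under stereographic projection they become disjoint spherical caps of area bounded below while the sphere has finite area; as the ray is simple it meets each disk at most once, so the spherical diameters of the $u_n$ tend to $0$, and since consecutive disks are tangent this forces the spherical distance between $x_n$ and $x_{n+1}$ to tend to $0$. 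A sequence with vanishing consecutive gaps in a compact metric space has a \emph{connected} set $L$ of subsequential limits, and every such limit lies in $A^\ast$ (a finite limit is approached by infinitely many distinct disks, hence is an accumulation point; the only other option is $\infty$). Thus $L$ is a connected subset of the countable set $A^\ast$, hence a single point $a$, so $x_n\to a$ and $u_n\to a$. This is the step I expect to be the main obstacle: upgrading the easy observation that $\cC$ accumulates \emph{somewhere} on $A^\ast$ (immediate from compactness) to the statement that a single ray \emph{converges} to one point of $A^\ast$. The two crucial inputs are the shrinking of consecutive disks along a simple ray and the total disconnectedness of the countable set $A^\ast$.

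Granting the claim, define for $a\in A^\ast$ the event $F_a$ that $G_\S^p$ has an infinite component carrying a convergent adjacent open sequence to $a$; the structural claim gives the inclusion $\{\exists\text{ infinite component}\}\subseteq\bigcup_{a\in A^\ast}F_a$, and since $A^\ast$ is countable it suffices to prove $\P(F_a)=0$ for each $a$. For $a=\infty$ the convergence $u_n\to\infty$ means exactly $d(u_0,u_n)\to\infty$, so $F_\infty\subseteq\bigcup_{s_0\in\S}E_{\S,p}(s_0,\infty)$, which has probability $0$ by Theorem~\ref{thm:percolation for circle packing} together with the countability of $\S$.

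For a finite $a\in A$ I would pass to $\widehat{\R}^2$ and apply the inversion $\phi$ centered at $a$. No disk contains $a$ in its interior (otherwise a small neighbourhood of $a$ would meet only that one disk, contradicting $a\in A$), and at most two disks can pass through $a$ (two disks through $a$ with disjoint interiors must be tangent to a common line at $a$ and lie on opposite sides of it). Calling this finite set $B_a$ and discarding an initial segment, the tail of the convergent sequence lies in $\S\setminus B_a$ and still converges to $a$. Now $\phi$ maps every disk not meeting $a$ to a disk and preserves disjointness of interiors and tangency, hence it is a graph isomorphism from $G_{\S\setminus B_a}$ onto the circle packing $\phi(\S\setminus B_a)$; under the coupling in which $s$ is open if and only if $\phi(s)$ is open, $p$-percolation is carried to $p$-percolation and the tail sequence is carried to an adjacent open sequence escaping to $\phi(a)=\infty$. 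Thus, under this coupling, $F_a$ is contained in $\bigcup_{t_0\in\phi(\S\setminus B_a)}E_{\phi(\S\setminus B_a),\,p}(t_0,\infty)$, which has probability $0$ by Theorem~\ref{thm:percolation for circle packing}. Summing these vanishing probabilities over the countable set $A^\ast$ yields $\P(\exists\text{ infinite component})=0$, completing the proof.
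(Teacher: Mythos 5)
Your proof is correct and follows essentially the same route as the paper's: extract a ray of tangent open disks via K\"onig's lemma, show it must converge either to infinity or to a single accumulation point, rule out the first alternative by Theorem~\ref{thm:percolation for circle packing} directly and the second by applying a M\"obius inversion sending the accumulation point to infinity and invoking the theorem again, then sum over the countably many accumulation points and starting disks. The only place you diverge is in proving that the ray converges to a single point of $A\cup\{\infty\}$: the paper finds a circle around a putative finite subsequential limit that avoids all accumulation points and hence meets only finitely many disks, acting as a barrier, whereas you pass to the spherical metric, note that diameters shrink along a simple ray by an area argument, and use that the (connected) set of subsequential limits sits inside a countable set and is therefore a singleton --- both arguments are valid and use the countability hypothesis in the same essential way.
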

The corollary follows from Theorem~\ref{thm:percolation for circle packing} by applying M\"obius transformations to $\S$, as detailed in Section~\ref{sec:proof of corollary no accumulation points}.


\subsection{Recurrent and transient triangulations}\label{sec:recurrent and transient triangulations}
In this section we deduce consequences of Theorem~\ref{thm:percolation for circle packing} for plane triangulations. A main tool is a result of He and Schramm~\cite{HS95} which connects the recurrence/transience properties of plane triangulations with their representing circle packings. The obtained results further shed light on two conjectures and a question of Benjamini~\cite{B2018}.


We start by describing required notation, following~\cite[Chapters 3,4]{N18}.
A graph is \emph{locally finite} if every vertex has finite degree, it is \emph{simple} if it has no multiple edges or self loops and it is of \emph{bounded degree} if the supremum of its degrees is finite. A \emph{proper drawing} of a planar graph $G$ is a map sending the vertices of $G$ to distinct points in $\R^2$ and the edges of $G$ to continuous curves between the corresponding vertices so that no two curves intersect, except at the vertices shared by their edges. A \emph{planar map} is a locally finite planar graph endowed with a cyclic permutation of the edges incident to each vertex, such that there exists a proper drawing of the graph for which the clockwise order of the curves touching the image of each vertex follows the cyclic permutation associated to that vertex. We often use the same notation for the planar map and its underlying graph. A planar map is \emph{simple} (\emph{connected, bounded degree}) if its underlying graph is simple (connected, bounded degree). The structure of a planar map allows to define its \emph{faces}. To this end direct each of the edges of $G$ in both ways and say that a directed edge $\vec{e}$ of the map \emph{precedes} the directed edge $\vec{f}$ if $\vec{e}=(x,v), \vec{f}=(v,y)$ and $y$ is the successor of $x$ in the cyclic permutation of $v$ (if $x$ is the only neighbor of $v$ we mean that $y=x$). Now define an equivalence relation on directed edges by saying that $\vec{e}$ and $\vec{f}$ are in the same face if there exists a directed path $\vec{e}_1,\ldots, \vec{e}_m$ in the graph with $\{\vec{e},\vec{f}\}=\{\vec{e}_1,\vec{e}_m\}$ and $\vec{e}_i$ preceding $\vec{e}_{i+1}$ for each $i$. Faces are defined as the equivalence classes of this relation. The planar map is a \emph{triangulation} if it is connected and each of its faces has exactly 3 edges. A graph (or planar map) is \emph{infinite} it has infinitely many vertices. An infinite graph is \emph{one-ended} if removing any finite subset of its vertices (and their incident edges) leaves exactly one infinite connected component. Infinite one-ended triangulations are also called \emph{plane triangulations} or \emph{disk triangulations} as they have proper drawings which `cover' the plane, or disk, in a suitable sense (see~\cite[Chapter 4.1]{N18}).

Itai Benjamini~\cite{B2018} (see also~\cite{BK18}) made several conjectures regarding percolation on planar triangulations which are suggested by assuming quasi-invariance under coarse conformal uniformization. The following conjecture and question serve as part of the motivation for our next results.
\begin{conj}\cite[Conjecture 2.1]{B2018}\label{conj:transient triangulations}
  Let $G$ be a bounded degree plane triangulation. If $G$ is transient then $G^{1/2}$ has an infinite connected component almost surely.
\end{conj}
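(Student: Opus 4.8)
The plan is to combine the geometric rigidity of circle packings of transient triangulations with the self-matching duality of site percolation on triangulations, and then to promote this into a percolation statement through crossing estimates for a sequence of concentric annuli.

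First I would invoke the theorem of He and Schramm~\cite{HS95}: since $G$ is a bounded degree plane triangulation that is transient, it is \emph{CP hyperbolic}, that is, it admits a circle packing whose carrier is the open unit disk $\mathbb{D}$. Bounded degree is essential here, because through the ring lemma it forces tangent disks to have comparable radii and makes the combinatorial and Euclidean metrics quasisymmetrically related; this is what turns the Euclidean geometry of the packing into a faithful proxy for the graph. In particular, a path of disks escaping every compact subset of $\mathbb{D}$ --- equivalently, approaching $\partial\mathbb{D}$ --- corresponds to a path in $G$ escaping to infinity, so it suffices to produce an open path of disks that reaches the boundary circle.

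Second, I would exploit the fact that site percolation on a triangulation is \emph{self-matching}: since every bounded face is a triangle, the matching graph coincides with $G$ itself, so for any topological annulus in the carrier exactly one of the following occurs --- there is an open path crossing the annulus radially, or there is a closed circuit separating its two boundary components. At $p=1/2$ this is the natural analogue of the self-duality that pins the triangular lattice to $1/2$. I would then fix radii $r_n\uparrow 1$ and try to show that, with probability bounded below uniformly in $n$, the carrier annulus $\{r_n\le|z|\le r_{n+1}\}$ is crossed radially by an open path; gluing such crossings across scales, and joining them through the intervening disks, would yield an open path converging to $\partial\mathbb{D}$ and hence an infinite cluster.

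The hard part --- and the reason the present paper settles only for percolation with the \emph{universal} parameter $1-p$ supplied by Theorem~\ref{thm:percolation for circle packing} --- is the third step: obtaining Russo--Seymour--Welsh type crossing estimates at exactly $p=1/2$ that are uniform over all bounded degree transient triangulations. Self-matching tells us only that an open radial crossing and a closed separating circuit are complementary, so that their probabilities sum to one; pinning the open crossing probability to a value bounded below by a positive constant would require a colour symmetry that a general, non-vertex-transitive triangulation does not possess. I would therefore expect the decisive missing input to be a box-crossing / RSW theory for percolation on circle packings --- the analogue of what conformal invariance provides on the triangular lattice --- together with a renormalization argument exploiting the hyperbolic geometry to keep the annuli conformally thick as $r_n\to 1$. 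Absent such a theory, the exact threshold $1/2$ remains out of reach, which is why this statement appears here as a conjecture rather than a theorem.
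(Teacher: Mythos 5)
You have not proved the statement, and you say so yourself: the decisive step --- a Russo--Seymour--Welsh/box-crossing input at exactly $p=1/2$, uniform over all bounded degree transient plane triangulations --- is missing, and without it the argument does not close. To be clear about the status: this statement is Benjamini's Conjecture~2.1 and the paper does not prove it either; it appears in the paper only as motivation, and the paper's actual theorem (Corollary~\ref{cor:percolation on plane triangulations}, part~\eqref{item:transient plane triangulations}) establishes the weaker claim that $G^{1-p}$ has an infinite component for the universal constant $p$ of Theorem~\ref{thm:percolation for circle packing}. So your proposal should be read as a strategy sketch with a correctly identified gap, not as a proof.

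That said, your skeleton does track the paper's route to the weaker result quite closely, and it is worth seeing exactly where the two diverge. The paper's proof of Lemma~\ref{lem:circle packings of unit disk} uses the same two ingredients you name: the He--Schramm packing with carrier $\mathbb{D}$, and the triangulation duality (an open path from the centre to the boundary exists iff there is no closed separating circuit). But instead of gluing radial crossings of a sequence of annuli --- which would require a crossing probability bounded below at every scale, i.e.\ precisely the RSW input you lack --- the paper runs a single union bound over all closed separating circuits, indexed by the disk of maximal radius in the circuit, and kills each term with the exponential decay $\P(E_{\S,p}(s_0,r))\le e^{-r/D}$ of Theorem~\ref{thm:percolation for circle packing} applied to the sub-packing of disks of comparable or smaller radius. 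This is exactly the step that forces the closed density to be the small universal $p$ rather than $1/2$: the sum over scales converges only because closed circuits are exponentially unlikely at that density. One further small caution about your second paragraph: the exact complementarity ``open radial crossing or closed separating circuit'' requires the dual connectivity to be taken in the triangulation itself, and in the carrier the circuit must be allowed to pass through the interstices of triangular faces; the paper handles this by defining the cycles $\mathcal{C}$ in $G_\S$ and using that the carrier is all of $\mathbb{D}$, so you would need the same care. Your identification of the missing ingredient (a uniform RSW theory for percolation on circle packings at $1/2$) is accurate and is consistent with the paper's own discussion in Section~\ref{sec:discussion and open questions}, where even the weaker Conjecture~\ref{conj:uniform exponential decay} (exponential decay for all $p<1/2$) is left open.
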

It is further pointed out in~\cite{B2018} that the conjecture remains open even if $1/2$ is replaced by any other fixed $1/2<p<1$.
\begin{ques}\cite[Section 2]{B2018}\label{ques:recurrent triangulations}
  Let $G$ be a bounded degree plane triangulation. Does recurrence of $G$ imply that $G^{1/2}$ does not have an infinite connected component almost surely?
\end{ques}

We say that a planar map $G$ is \emph{represented} by a circle packing $\mathcal{S}$ if $G = G_{\S}$ and the cyclic order on the edges incident to each vertex $v$ equals the clockwise order in which the disks corresponding to the neighbors of $v$ appear around the disk corresponding to $v$. The \emph{carrier} of a circle packing $\S$ representing a triangulation is the union of the closed disks of $\S$ together with the space between any three disks corresponding to a face of the triangulation. The definitions imply that all accumulation points of $\S$ lie outside of its carrier.

The following theorem is part of the main result of~\cite{HS95} (see also~\cite[Chapter 4]{N18}). Write $\mathbb{D}:=\{z\in\R^2\colon \|z\|<1\}$ for the open unit disk.
\begin{thm}(He--Schramm~\cite{HS95})\label{thm:He--Schramm}
Let $G$ be a simple plane triangulation.
\begin{enumerate}
  \item If $G$ is recurrent then it may be represented by a circle packing whose carrier is $\R^2$.
  \item If $G$ is transient and of bounded degree then it may be represented by a circle packing whose carrier is $\mathbb{D}$.\label{item:He--Schramm transience}
\end{enumerate}
\end{thm}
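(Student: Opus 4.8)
The plan is to follow the discrete uniformization strategy: first build a circle packing of $G$ by approximating $G$ with finite triangulations, and then match the geometric type of the limiting packing (whether it fills the plane or a bounded disk) to the probabilistic type (recurrence versus transience) of simple random walk on $G$. Fix a root vertex $v_0$ together with a chosen neighbor, and exhaust $G$ by an increasing sequence of finite simple triangulations with boundary $G_1\subset G_2\subset\cdots$ with $\bigcup_n G_n=G$. By the Koebe--Andreev--Thurston theorem each $G_n$ admits a circle packing in which the boundary cycle packs against the unit circle, unique up to M\"obius transformations; I would normalize each such packing by the M\"obius transformation sending the disk of $v_0$ to the unit disk centered at the origin and the chosen neighbor to a fixed direction.

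First I would pass to a limiting packing $\S$ of the infinite triangulation $G$. Under the bounded-degree hypothesis of part~(2) the Ring Lemma of Rodin and Sullivan---which bounds below the ratio of the radii of two tangent disks in terms of the degree of their common neighbor---supplies the compactness needed to extract a subsequential limit in which every disk keeps a positive radius and the tangency pattern reproduces $G$. In the general setting of part~(1) the Ring Lemma is unavailable, and the limit instead requires a non-degeneration argument ensuring that no disk collapses or escapes; in either case one obtains a packing $\S$ of $G$ whose carrier, by a rigidity argument for one-ended triangulations, is either all of $\R^2$ (the \emph{parabolic} case) or the open unit disk $\mathbb{D}$ (the \emph{hyperbolic} case), with no intermediate domain possible.

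The identification of these two geometric types with recurrence and transience is where the probabilistic content enters, and I would carry it out by the length--area (vertex extremal length) method. The decisive quantity is the total area $\sum_{s\in\S}r_s^2$ occupied by the disks, where $r_s$ denotes the radius of $s$. In the hyperbolic case the packing lies inside the bounded domain $\mathbb{D}$, so the radii are square-summable, $\sum_{s\in\S}r_s^2\le\pi<\infty$. This finite total area is the geometric engine for transience: through the extremal-length method it produces a unit flow from $v_0$ to infinity of finite energy, so that $G$ has finite effective resistance to infinity and is therefore transient. By contraposition, recurrence of $G$ excludes the hyperbolic case and forces the carrier to be $\R^2$, which is part~(1). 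In the parabolic case the carrier is all of $\R^2$ and the total area is infinite; here, under the bounded-degree hypothesis, I would use the Ring Lemma to make the packing-metric extremal length comparable to the combinatorial effective resistance, so that infinite area yields infinite resistance to infinity and hence recurrence of $G$. This gives part~(2) by contraposition.

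The main obstacle is precisely this correspondence between extremal length measured in the packing metric and the effective resistance governing the random walk, together with the asymmetric role of the degree bound. The comparison is uniform only when the radii of tangent disks are comparable up to a constant, which is exactly what the Ring Lemma supplies under a degree bound; this is why part~(2) genuinely requires bounded degree, since for unbounded degrees a parabolic packing need no longer correspond to a recurrent walk. Part~(1), by contrast, rests only on the implication hyperbolic $\Rightarrow$ transient, whose mechanism is the finiteness of the area $\sum_{s\in\S}r_s^2$; the delicate point there is to extract the witnessing finite-energy flow without any control on the degrees, so that the argument survives in the unbounded-degree regime. Turning these length--area estimates into quantitative flow and resistance bounds, and controlling the limiting packing when the Ring Lemma is unavailable, is the technical heart of the argument.
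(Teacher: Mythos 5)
The paper does not prove Theorem~\ref{thm:He--Schramm}: it is quoted as an external result of He and Schramm~\cite{HS95} (see also~\cite{N18}), so there is no internal argument to compare against. Your sketch correctly reproduces the architecture of the He--Schramm proof --- Koebe packings of a finite exhaustion, the parabolic/hyperbolic dichotomy for the carrier, and the vertex-extremal-length (length--area) method linking carrier type to recurrence --- and it assigns the bounded-degree hypothesis to the right place: the Ring Lemma is needed only for the implication ``parabolic $\Rightarrow$ recurrent,'' which is why part~(2) alone carries that hypothesis, while ``hyperbolic $\Rightarrow$ transient'' is degree-free because a finite-vertex-energy flow automatically has finite edge energy. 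The one imprecise step is the phrase ``infinite area yields infinite resistance'' in the parabolic case: infinite total area is not by itself sufficient for recurrence; the actual mechanism is a uniform lower bound on the effective resistance across each dyadic annulus of the carrier $\R^2$ (obtained from the Ring Lemma together with a length--area estimate localized to that annulus), which is then summed over annuli. You flag the quantitative flow and resistance estimates as the technical heart, which is accurate; filling them in is exactly the content of~\cite{HS95}.
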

The first part of Theorem~\ref{thm:percolation for circle packing}, given in~\eqref{eq:no open path to infinity circle packing}, is not helpful for circle packings whose carrier is the unit disk. However, the second part, given in~\eqref{eq:bounded above squares circle packing}, is applicable and allows to derive the following lemma, which shows the existence of an infinite cluster for sufficiently large values of the percolation parameter.
\begin{lemma}\label{lem:circle packings of unit disk}
  Let $p>0$ be the constant from Theorem~\ref{thm:percolation for circle packing}. Let $\S$ be a circle packing representing a triangulation whose carrier is $\mathbb{D}$. Then the probability that $G_\S^{1-p}$ has an infinite connected component is one.
\end{lemma}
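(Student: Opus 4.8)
The plan is to produce, with positive probability, an infinite cluster of $(1-p)$-open (equivalently, $p$-closed) disks, and then upgrade this to probability one. First I would record two standing reductions. Since the carrier is $\mathbb{D}$ we have $D=\sup_{s\in\S}\diam(s)\le 2<\infty$, so the exponential bound \eqref{eq:bounded above squares circle packing} of Theorem~\ref{thm:percolation for circle packing} is in force for the $p$-open percolation; and by part~\eqref{item:He--Schramm transience} of the He--Schramm theorem the triangulation $G_\S$ has bounded degree, hence is locally finite, so the event that $G_\S^{1-p}$ has an infinite cluster is a tail event and has probability $0$ or $1$. Thus it suffices to show it has positive probability. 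I would also record the geometric fact that the packing is locally finite in the interior: since all accumulation points of $\S$ lie on $\partial\mathbb{D}$, every compact $K\subset\mathbb{D}$ meets only finitely many disks. Fix a disk $s_0\in\S$ whose interior contains the origin (a Euclidean similarity reduces to this case), with centre $c_0$ and $d_0:=\diam(s_0)>0$.

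The next step is planar duality. Because $G_\S$ is a triangulation, its site percolation is self-matching, and the relevant topological statement is that the $p$-closed cluster of $s_0$ contains disks whose centres approach $\partial\mathbb{D}$ (and is therefore infinite, by interior local finiteness) unless there is a cycle of $p$-open disks in $G_\S$ encircling $s_0$, i.e.\ an \emph{open circuit} separating $s_0$ from $\partial\mathbb{D}$. So the plan reduces to showing that, with positive probability, no open circuit encircles $s_0$; on that event the closed cluster of $s_0$ reaches the boundary and is infinite, giving an infinite cluster in $G_\S^{1-p}$.

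To bound the probability of an encircling open circuit I would use \eqref{eq:bounded above squares circle packing}. Any such circuit is a connected set of open disks of Euclidean diameter at least $d_0$, so it contains an open disk joined by an open path to another open disk at $\ell_\infty$-distance at least $d_0/2$ from it; each individual such configuration costs at most $e^{-d_0/(2D)}$ by \eqref{eq:bounded above squares circle packing}. I would organise the estimate by Euclidean scale and location --- covering the relevant part of $\overline{\mathbb{D}}$ by $\ell_\infty$-boxes and summing the contributions of boxes through which the circuit must pass --- and exhaust $\mathbb{D}$ by compact sub-disks so that at each stage only finitely many disks are involved. Since $p$ is a fixed small constant, the total should be strictly less than $1$.

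The hard part will be exactly this last estimate. The bound \eqref{eq:bounded above squares circle packing} attaches a \emph{constant} factor $e^{-r/D}$ to each disk, whereas near $\partial\mathbb{D}$ the packing has infinitely many arbitrarily small disks, so a naive union bound over all disks that could lie on an encircling circuit diverges. Overcoming this requires genuinely geometric input: using the disjointness of the disks (a packing bound) to replace the infinitely many potential crossing disks at each scale by boundedly many boxes, while using that an encircling circuit has Euclidean diameter bounded below so that \eqref{eq:bounded above squares circle packing} supplies a truly small factor per scale. The accumulation of $\S$ on $\partial\mathbb{D}$ is the source of the difficulty, and I expect it to be handled by the same device used for Corollary~\ref{cor:circle packing accumulation points} --- applying M\"obius transformations of $\S$ to send boundary accumulation points to infinity, where instead \eqref{eq:no open path to infinity circle packing} applies --- combined with the compact exhaustion above. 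Reconciling the Euclidean exponential decay with the combinatorics of the accumulation set is the crux.
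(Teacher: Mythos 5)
Your setup is the right one and matches the paper's: reduce to positive probability via the zero--one law, and use the triangulation duality to say that an infinite retained cluster exists unless some circuit of deleted disks (each deleted with probability $p$) surrounds a fixed neighborhood of the origin. You also correctly identify the crux --- that \eqref{eq:bounded above squares circle packing} applied with the global diameter bound $D\le 2$ gives only a \emph{constant} per-disk factor, so a union bound over the infinitely many disks accumulating on $\partial\mathbb{D}$ diverges. But you do not close this gap, and the device you propose for closing it would not work. The accumulation set here is all of $\partial\mathbb{D}$, which is uncountable, so the M\"obius-transformation argument of Corollary~\ref{cor:circle packing accumulation points} (which treats countably many accumulation points one at a time, sending each to infinity and invoking \eqref{eq:no open path to infinity circle packing}) is not available; no single M\"obius map sends the whole boundary circle to infinity, and a compact exhaustion does not by itself produce a summable bound.

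The missing idea is to index each potential blocking circuit by its disk of \emph{maximal radius} and then apply \eqref{eq:bounded above squares circle packing} not to $\S$ but to the sub-packing of disks of radius at most that of the indexing disk. Concretely, if $\S_k$ denotes the disks of radius in $(2^{-(k+1)},2^{-k}]$, then a circuit surrounding $\frac12\mathbb{D}$ whose largest disk $s$ lies in $\S_k$ is a closed connection, inside the packing $\cup_{m\ge k}\S_m$, from $s$ to distance $\ge\frac12$; that sub-packing has $D\le 2^{1-k}$, so the event costs at most $e^{-c\,2^{k}}$ --- doubly exponential in the scale --- while area considerations give $|\S_k|\le 2^{2(k+1)}$, so the union bound over scales converges. (The finitely many disks at the coarsest scales are handled by conditioning on the positive-probability event that they are all retained.) Without shrinking the effective $D$ with the scale in this way, the estimate you sketch cannot be made to converge, so as written the proposal has a genuine gap at its decisive step.
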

The method of proof of the lemma can yield the existence of infinite connected components on other triangulations. We formulate one more result of this type.
\begin{lemma}\label{lem:circle packings with slowly growing radii}
  Let $p>0$ be the constant from Theorem~\ref{thm:percolation for circle packing}. There exists $c>0$ such that the following holds. Let $\S$ be a circle packing representing a triangulation whose carrier is $\mathbb{\R}^2$. Let $f(r)$ be the maximal radius of a disk in $\S$ intersecting $r\mathbb{D}$. Suppose that
  \begin{equation}\label{eq:radius growth}
     \limsup_{r\to\infty} f(r)\cdot\frac{\log\log r}{r}\le c.
   \end{equation}
   Then the probability that $G_\S^{1-p}$ has an infinite connected component is one.
\end{lemma}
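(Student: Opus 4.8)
The plan is to run the strategy behind Lemma~\ref{lem:circle packings of unit disk}, but now arranged over a sequence of geometric scales, using the slow growth of the radii to compensate for the fact that $\sup_{s\in\S}\diam(s)=\infty$. Since the carrier of $\S$ is all of $\R^2$, the packing has no accumulation points and is locally finite, so every bounded region of $\R^2$ meets only finitely many disks; consequently an infinite connected component of $G_\S^{1-p}$ is automatically unbounded in $\R^2$. The event that $G_\S^{1-p}$ has an infinite component is a tail event in the disk states, so by Kolmogorov's zero--one law it has probability $0$ or $1$, and it suffices to prove it is positive. Fixing the disk $s_0$ nearest the origin, set $\rho_k=2^k$, $A_k=\{x\in\R^2\colon \rho_k\le\|x\|_\infty\le\rho_{k+1}\}$, and $S_k=\{x\colon \rho_k\le\|x\|_\infty\le\rho_{k+2}\}$. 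I would call $A_k$ \emph{good} if it contains an open circuit $\gamma_k$ of retained disks encircling $s_0$ and $S_k$ contains an open radial crossing $\sigma_k$ of retained disks; since $\sigma_k$ traverses the shell $A_{k+1}$ it must meet $\gamma_{k+1}$, and it meets $\gamma_k$ as well, so the $\gamma_k$ are linked through the $\sigma_k$ into a single unbounded open component. Thus it is enough to show that almost surely all large $A_k$ are good.

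The combinatorial input is the matching/duality property of site percolation on the triangulation $G_\S$: the absence of an open circuit around $s_0$ in $A_k$ forces a crossing of $A_k$ by deleted (closed) disks from its inner to its outer boundary, and the absence of an open radial crossing of $S_k$ forces a closed circuit around $s_0$ inside $S_k$. In either case one obtains a connected cluster of closed disks whose $\R^2$-diameter is of order $\rho_k$. The closed disks form a site percolation with parameter $p$, which is precisely the regime controlled by the quantitative bound~\eqref{eq:bounded above squares circle packing} of Theorem~\ref{thm:percolation for circle packing}: from a given disk, the probability of a closed connection reaching $\R^2$-distance $r$ decays like $e^{-r/D}$ with $D$ the maximal diameter. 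Every disk meeting $S_k$ has diameter at most $D_k:=2f(2\rho_{k+2})$, so a single closed connection across the scale-$\rho_k$ region has probability at most $e^{-\Omega(\rho_k/D_k)}$.

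The main obstacle is that~\eqref{eq:bounded above squares circle packing} bounds the connection probability \emph{from a fixed disk}, whereas a bounded region of the plane may meet arbitrarily many small disks, so a naive union bound over all possible starting disks of a closed crossing diverges. I would resolve this exactly as in the proof of Lemma~\ref{lem:circle packings of unit disk}, by passing to the box--renormalization underlying Theorem~\ref{thm:percolation for circle packing}: cover the inner boundary of $A_k$ by $O(\rho_k/D_k)$ geometric boxes of side comparable to $D_k$ and bound, for each box, the probability that the closed cluster meeting it reaches $\R^2$-distance $\gtrsim\rho_k$. Because this box event is governed by a connected chain of ``bad'' boxes rather than by individual disks, it obeys the same estimate $e^{-\Omega(\rho_k/D_k)}$, and a union over the $O(\rho_k/D_k)$ boxes is harmless; the bound for closed circuits in $S_k$ follows by covering the whole shell with $O((\rho_k/D_k)^2)$ boxes. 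This per-disk-to-per-region upgrade is the delicate point, and I expect it to be the main technical content of the argument.

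It remains to insert the growth hypothesis. Since $\limsup_{r\to\infty}f(r)\,\log\log r/r\le c$ by~\eqref{eq:radius growth}, for large $k$ one has $D_k\le C c\,\rho_{k}/\log\log\rho_{k}$, whence $\rho_k/D_k\ge c'\log\log\rho_k=c'\log(k\log 2)$ with $c'$ inversely proportional to $c$. Hence the probability that $A_k$ fails to be good is at most $\mathrm{poly}(\log k)\cdot k^{-\kappa/c}$ for a universal $\kappa>0$, the implicit constants depending only on the fixed $p$. Choosing $c$ small enough that $\kappa/c>1$ makes $\sum_k\P(A_k\text{ not good})$ finite, so by Borel--Cantelli only finitely many $A_k$ fail; almost surely all large $A_k$ are good, and the linked circuits $\gamma_k$ produce an infinite open component. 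The two places where care is genuinely needed are the renormalization step just described and the observation that the $\log\log$ threshold in~\eqref{eq:radius growth} is exactly what balances the decay rate $1/D_k\sim\log\log\rho_k/(c\rho_k)$ against the dyadic growth of the scales.
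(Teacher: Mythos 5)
Your global architecture (dyadic annuli, open circuits linked by radial crossings, duality on the triangulation turning the failure of a circuit or crossing into a long closed connection, and a Borel--Cantelli argument whose summability is exactly what the $\log\log$ hypothesis~\eqref{eq:radius growth} buys) is sound, and is a legitimate alternative organization to the paper's, which instead works with a single family of closed cycles surrounding a fixed disk $r_0\mathbb{D}$ and shows directly that the probability that no such closed cycle exists is positive. But there is a genuine gap at precisely the step you flag as the main technical content: the upgrade from the per-disk bound~\eqref{eq:bounded above squares circle packing} to a per-region bound. You propose to cover the boundary of $A_k$ by $O(\rho_k/D_k)$ boxes of side comparable to $D_k$ and assert that the event that the closed cluster meeting a given box reaches distance of order $\rho_k$ ``obeys the same estimate $e^{-\Omega(\rho_k/D_k)}$'' because it is ``governed by a connected chain of bad boxes.'' No such box renormalization appears in the paper (neither in the proof of Theorem~\ref{thm:percolation for circle packing} nor in that of Lemma~\ref{lem:circle packings of unit disk}), and it is not clear it can be made to work: a box of side $D_k$ may meet arbitrarily many small disks, so one cannot union over starting disks, and a box-level ``bad'' event has no useful independence or domination structure when the disks meeting a single box have unbounded ratios of radii. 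As written, this step is an assertion rather than an argument.

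The paper closes this gap with a different and much simpler device, which you could substitute directly into your scheme: union not over starting positions but over the \emph{maximal-radius disk} $s$ of the dual closed connected set (the closed crossing or closed circuit). Any such set has a maximal disk; on the event that this disk is $s$, the whole set lies in the sub-packing of disks of radius at most that of $s$, to which~\eqref{eq:bounded above squares circle packing} applies with $D=\diam(s)$, giving a bound of the form $e^{-c'\rho_k/\diam(s)}$ since the set must reach distance of order $\rho_k$ from $s$. One then groups the candidate disks $s$ by dyadic radius scale and by the annulus they meet, bounds the number in each group by an area count (as in~\eqref{eq:S k m bound}), and sums: smaller disks are more numerous (polynomially in the inverse radius) but carry a much faster exponential decay, so the sum is dominated by the top scale $\diam(s)\asymp D_k$ and yields the $e^{-\Omega(\rho_k/D_k)}$ bound, hence the $(\log k)^{O(1)}k^{-\kappa/c}$ failure probability you want. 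With that replacement your Borel--Cantelli computation goes through; without it, the proof is incomplete.
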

We remark that the assertion of the lemma may fail if $f(r)$ grows linearly; see Section~\ref{sec:value of p_c}.

Corollary~\ref{cor:circle packing accumulation points}, Theorem~\ref{thm:He--Schramm} and Lemma~\ref{lem:circle packings of unit disk} yield the following corollary, which resolves the versions of Conjecture~\ref{conj:transient triangulations} and Question~\ref{ques:recurrent triangulations} in which the parameter $1/2$ is replaced by sufficiently large and sufficiently small probabilities, respectively.
\begin{cor}\label{cor:percolation on plane triangulations}
  Let $p>0$ be the constant from Theorem~\ref{thm:percolation for circle packing}. Let $G$ be a simple plane triangulation.
  \begin{enumerate}
    \item If $G$ is recurrent then the probability that $G^p$ has an infinite connected component is zero.\label{item:recurrent plane triangulations}
    \item If $G$ is transient and has bounded degrees then the probability that $G^{1-p}$ has an infinite connected component is one.\label{item:transient plane triangulations}
  \end{enumerate}
\end{cor}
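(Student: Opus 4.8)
The plan is to obtain both parts directly by combining the He--Schramm representation theorem (Theorem~\ref{thm:He--Schramm}) with the two percolation results already proved for circle packings. The observation that reduces this to bookkeeping is that if a circle packing $\S$ represents a simple plane triangulation $G$, then $G_\S = G$ as graphs, so the site percolations $G^p$ and $G_\S^p$ agree in distribution. Moreover $G$, being a planar map, is locally finite and connected, and consequently $\S$ is a locally finite circle packing: the number of disks tangent to a given disk $s$ equals the degree in $G$ of the corresponding vertex, which is finite.

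For part~\eqref{item:recurrent plane triangulations}, assume $G$ is recurrent. I would apply the first part of Theorem~\ref{thm:He--Schramm} to represent $G$ by a circle packing $\S$ whose carrier is all of $\R^2$. As recorded in the discussion preceding Theorem~\ref{thm:He--Schramm}, every accumulation point of a packing representing a triangulation lies outside the carrier; since here the carrier equals $\R^2$, the packing $\S$ has no accumulation points whatsoever, and in particular at most countably many. Together with the local finiteness noted above, this places $\S$ squarely in the scope of Corollary~\ref{cor:circle packing accumulation points}, which then yields that $G^p = G_\S^p$ has no infinite connected component almost surely.

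For part~\eqref{item:transient plane triangulations}, assume $G$ is transient and of bounded degree. Here I would invoke Theorem~\ref{thm:He--Schramm}\eqref{item:He--Schramm transience} to represent $G$ by a circle packing $\S$ whose carrier is the open unit disk $\mathbb{D}$. This is exactly the hypothesis of Lemma~\ref{lem:circle packings of unit disk}, whose conclusion gives immediately that $G^{1-p} = G_\S^{1-p}$ has an infinite connected component with probability one.

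The argument is thus essentially a citation of the three ingredients, and the only step demanding genuine care is the verification in part~\eqref{item:recurrent plane triangulations} that the representing packing really is free of accumulation points, so that Corollary~\ref{cor:circle packing accumulation points} applies. This is precisely where one uses both the structural fact that accumulation points of a triangulation-packing lie outside its carrier and the He--Schramm conclusion that the carrier is the full plane; without the latter one could imagine accumulation points sitting on the boundary of a smaller carrier. No quantitative estimates enter, so I do not anticipate any serious obstacle beyond this bookkeeping.
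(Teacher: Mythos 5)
Your proposal is correct and follows exactly the route the paper intends: it derives part (1) from Theorem~\ref{thm:He--Schramm}(1) together with the fact that accumulation points lie outside the carrier (so a carrier equal to $\R^2$ forces none) and Corollary~\ref{cor:circle packing accumulation points}, and part (2) from Theorem~\ref{thm:He--Schramm}(2) together with Lemma~\ref{lem:circle packings of unit disk}. The paper gives no further argument beyond this combination, so there is nothing to add.
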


Benjamini made a second conjecture regarding left-right crossings in percolation on square tilings.
\begin{conj}\cite[Conjecture 2.2]{B2018}\label{conj:square tiling} There exists $c>0$ so that the following holds. Tile the unit square with (possibly infinitely many) squares of varying sizes so that at most three squares meet at corners. Color each square black or white with equal probability independently. Then the probability of a black left-right crossing is at least $c$.
\end{conj}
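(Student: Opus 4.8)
The plan is to reduce the conjecture, by planar self-duality and the color symmetry of the model, to a uniform crossing estimate of Russo--Seymour--Welsh type, and then to attempt that estimate through the circle-packing machinery underlying the present paper. I would first exploit the hypothesis that at most three squares meet at any corner: this trivalence condition makes site percolation on the tiling self-dual, since the tangency graph of the tiling and its dual interlock like a triangulation and its dual, and one obtains the exact dichotomy that either there is a black left--right crossing of the unit square or there is a white top--bottom crossing, these two events being mutually exclusive. Hence
\[
  \P(\text{black LR}) + \P(\text{white TB}) = 1 .
\]
Since each square is black or white with equal probability, swapping the two colors preserves the measure, so $\P(\text{white TB}) = \P(\text{black TB})$ and therefore
\[
  \P(\text{black LR}) + \P(\text{black TB}) = 1 .
\]
The conjecture is thus \emph{equivalent} to the assertion that $\P(\text{black LR})$ cannot be pushed arbitrarily close to $0$ uniformly over admissible tilings. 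No purely symmetric argument can settle this, because a general tiling carries no diagonal symmetry exchanging the horizontal and vertical directions; genuine quantitative input comparing crossings in the two directions is unavoidable.

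To supply that input I would try two complementary routes. The first is to circle pack the dual triangulation of the tiling (squares become vertices, tangencies edges, trivalent corners triangular faces); the packing endows the unit square with a discrete conformal structure adapted to the tiling, and the aim would be to run a Russo--Seymour--Welsh gluing scheme in this geometry, using the packing to transport a crossing of one sub-region to a crossing of another and the FKG inequality to concatenate them, thereby comparing crossing probabilities across scales, aspect ratios and directions. The second route is to reflect the given tiling across the four sides of the unit square and tile the whole plane periodically by copies of it; a crossing in each cell then links up along the lattice of copies, so that a uniform lower bound on $\P(\text{black LR})$ is tied to the existence of an infinite black cluster in the periodic tiling, a question squarely in the framework of Theorem~\ref{thm:percolation for circle packing}.

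The main obstacle is that the conjecture lives exactly at the self-dual point $p=1/2$. The estimates available here, including the exponential decay of Theorem~\ref{thm:percolation for circle packing}, only bite deep in the subcritical regime of small retention parameter, where crossings are suppressed rather than guaranteed; at $p=1/2$ there is no decay to exploit, and the reflection route likewise constrains the infinite-cluster behavior only at extreme parameters, not at $1/2$. Compounding this, the admissible tilings can be geometrically wild --- squares of vastly different scales may accumulate, so there is no bound on the number of squares meeting a line or on the combinatorial aspect ratios of the induced rectangles, and the bounded-geometry hypotheses behind standard RSW arguments are simply absent. A plausible way forward is a compactness argument: were no universal $c$ to exist, one would extract from a sequence of tilings with vanishing black left--right crossing probability a limiting self-similar square tiling of a strip or quadrant exhibiting degenerate crossing behavior, and derive a contradiction with the conformal invariance expected of such a limit. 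Making this limit, together with the passage to the critical regime, rigorous is where I expect the real difficulty to lie.
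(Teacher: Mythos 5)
The statement you were asked to prove is a \emph{conjecture} (Benjamini's Conjecture 2.2), and the paper does not prove it either: it explicitly records that the conjecture is open for every fixed coloring probability in $[1/2,1)$, and establishes only the far-from-critical, finite-tiling variant (Corollary~\ref{cor:square tiling of square}), in which the squares are colored white with a small absolute constant probability $p$ and the probability of a white top--bottom crossing is bounded away from one. That corollary is proved by a union bound over dyadic scales: on any top--bottom crossing the square of maximal diameter must be connected to $\ell^\infty$-distance at least $1/4$ within the sub-packing of squares no larger than itself, and Theorem~\ref{thm:percolation for general packings} makes each such term doubly exponentially small in the scale. Your opening reductions are correct and consistent with the paper's own remark: since the tiling forms a triangulation, a black left--right crossing fails exactly when a white top--bottom crossing occurs, and color symmetry at $1/2$ then gives $\P(\text{black LR})+\P(\text{black TB})=1$. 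But this identity is only a restatement of the problem, as you yourself observe: it shows one of the two crossing probabilities is at least $1/2$ without controlling which.

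From that point on your proposal is a research program rather than a proof, and the gap is exactly the one you name: both of your routes (an RSW gluing scheme in the circle-packed geometry, and periodic reflection reducing to an infinite-cluster statement) require a quantitative crossing estimate at the self-dual point that is uniform over tilings with no bound on the number of scales, on degrees, or on aspect ratios of sub-rectangles, and no such estimate is available. The only quantitative tool in the paper, the exponential decay of Theorem~\ref{thm:percolation for circle packing}/\ref{thm:percolation for general packings}, is proved only for $p$ a small explicit constant (of order $e^{-26}$) and cannot hold at $p=1/2$ (the triangular lattice is critical there), so it supplies nothing at the parameter the conjecture concerns. The concluding compactness-and-conformal-invariance idea is not made precise (no tightness, no identification of the limit object, no contradiction derived). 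In short: no step you wrote is wrong, but the statement is not proved, and the missing ingredient is precisely the content of the open conjecture.
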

It is pointed out in~\cite{B2018} that the conjecture is open even when the probability to color a square black is $2/3$, and to the author's knowledge the conjecture is open for all fixed probabilities in $[1/2,1)$. Our results suffice to verify the version of the conjecture in which the probability of coloring a square black is an absolute constant close to $1$ and the tiling involves only finitely many squares (otherwise, the notion of left-right crossing may need to be made more precise; see Section~\ref{sec:other connectivity notions}). We prove a slightly stronger statement. Note that as the squares in Conjecture~\ref{conj:square tiling} form a triangulation, the event of a black left-right crossing equals the event that there is no white top-bottom crossing. We consider a collection of squares with disjoint interiors in the unit square (not necessarily a tiling, with four squares allowed to share a corner) and bound the probability of a white top-bottom crossing where the crossing is allowed to use diagonal connectivity when squares share a corner diagonally. The statement (and proof) involve Theorem~\ref{thm:percolation for general packings} of Section~\ref{sec:general packings}.
\begin{cor}\label{cor:square tiling of square}
  There exists $c>0$ so that the following holds. Let $p>0$ be the constant from Theorem~\ref{thm:percolation for general packings} for a packing of squares in $\R^2$. Pack finitely many squares in the unit square. Color each square white with probability $p$ independently. Then the probability of a white top-bottom crossing of the unit square (with diagonal connectivity allowed when four squares share a corner) is at most $1-c$.
\end{cor}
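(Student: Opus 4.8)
The plan is to recognize a white top-bottom crossing as an open connection emanating from the \emph{entire bottom edge} of the unit square, and then to bound its probability by the exponential connectivity estimate supplied by Theorem~\ref{thm:percolation for general packings}. Write $Q=[0,1]^2$ for the unit square and let $\S$ be the given finite packing of squares inside $Q$, each colored white with probability $p$. A white top-bottom crossing is exactly the event that some open square meeting the line $\{y=0\}$ is joined, through open squares (with diagonal steps permitted when squares share a corner), to some open square meeting the line $\{y=1\}$. In particular, on this event the open cluster generated by the squares touching the bottom edge contains points with $y$-coordinate $0$ and with $y$-coordinate $1$, hence reaches $\ell_\infty$-distance at least $1$ from the bottom edge.

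First I would set up the source. The crucial point is to treat the bottom edge as a \emph{single} source set rather than summing over the individual squares meeting it; this is what lets the estimate be applied without a union bound over the (possibly very numerous, arbitrarily small) squares along $\{y=0\}$. Concretely, I would run the argument behind Theorem~\ref{thm:percolation for general packings} with the source taken to be the collection of squares meeting $\{y=0\}$: the event that the open cluster of this source reaches $\ell_\infty$-distance at least $r$ is controlled by the same contour mechanism that yields \eqref{eq:bounded above squares circle packing}, giving a bound of the form $e^{-r/D}$, where $D=\sup_{s\in\S}\diam(s)$.

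Next I would insert the numerics. Because every square of $\S$ lies inside $Q$, its side is at most $1$ and therefore $D\le\sqrt2$ is bounded by a universal constant, irrespective of the packing. Applying the estimate with $r=1$ (the vertical reach forced by a crossing) yields
\begin{equation*}
  \P(\text{white top-bottom crossing})\le e^{-1/D}\le e^{-1/\sqrt2},
\end{equation*}
so the corollary holds with $c=1-e^{-1/\sqrt2}>0$. The diagonal connectivity allowed when four squares share a corner is already part of the adjacency structure used in Theorem~\ref{thm:percolation for general packings}, so it needs no separate treatment; one only checks that a packing of squares falls under the regularity hypothesis of that theorem.

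I expect the main obstacle to be the first step: transferring the single-source estimate to a source lying along a boundary edge without losing a factor counting the squares that meet the edge. A naive union bound over those squares fails, since there may be arbitrarily many of them and the factor $e^{-r/D}$ does not decay in their number. The resolution is to measure the reach of the \emph{whole} bottom-edge cluster---its vertical extent, which a crossing forces to be at least $1$---rather than the separation between two individual squares. This keeps the relevant distance equal to $1$ even when large squares are present: a tall square touching both edges merely realizes the extent directly, so large squares, which would otherwise shrink a square-to-square separation and thereby weaken the bound, cause no difficulty here. Verifying that the contour argument underlying \eqref{eq:bounded above squares circle packing} indeed produces such a bound for this boundary-anchored cluster is the crux of the proof.
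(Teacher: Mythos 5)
There is a genuine gap, and it sits exactly where you flagged it: the ``boundary-anchored'' exponential decay estimate is neither a consequence of Theorem~\ref{thm:percolation for general packings} nor true as a general principle. Your key intermediate claim is that the open cluster of \emph{all} squares meeting a source segment reaches $\ell_\infty$-distance $r$ with probability at most $e^{-r/D}$, with no dependence on the source. This fails in general: for a segment of length $L$ overlaid with a strip of unit squares ($D=1$), the cluster of the segment reaches distance $r$ with probability at least $1-(1-p^{r})^{\lfloor L\rfloor}$, which tends to $1$ as $L\to\infty$ for any fixed $p>0$, while $e^{-r/D}=e^{-r}$ stays fixed. So any correct version must degrade with the length of the source, and re-running the double induction of Section~\ref{sec:percolation on square packings} with a segment source is not routine: after rescaling so the smallest square has side $1$, the source has length $1/\ell_{\min}$, the annulus counts in~\eqref{eq:area estimates} acquire an extra factor proportional to (source length)$/2^k$, and this factor is not absorbed by the induction hypothesis $\alpha(k,r)\le e^{-r/2^{k-1}}$. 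Separately, your assertion that large squares ``cause no difficulty'' is not borne out: when $D$ is comparable to $1$ even a corrected union-bound over boundary squares gives a bound exceeding $1$, so this regime needs its own argument. In short, you have correctly located the crux but not supplied it, and the route you propose to it rests on a statement that is false without quantitative modification.

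For contrast, the paper resolves the difficulty by anchoring not at the boundary but at the \emph{maximal-diameter square of the crossing}. It first conditions on the event $F$ that every square of diameter exceeding $2^{-6}$ is closed; by area considerations there are at most a bounded number of such squares, so $\P(F)$ is bounded below by an absolute constant, and this disposes of the large-square regime. On $F$, any white crossing contains a square $s$ of maximal diameter, lying in some dyadic class $\S_k$ with $k\ge 6$, and the crossing must reach $\ell_\infty$-distance at least $1/4$ from $s$ using only squares of diameter at most $\diam(s)$. Theorem~\ref{thm:percolation for general packings} applied with $s_0=s$ and $D=\diam(s)\le 2^{-k}$ then gives $\P(E_s)\le e^{-2^{k-2}}$, and since $|\S_k|\le 2^{2(k+1)}$ the union bound $\sum_{k\ge 6}2^{2(k+1)}e^{-2^{k-2}}$ converges to something strictly less than $1$. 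The doubly-exponential decay in $k$ against the merely exponential growth of $|\S_k|$ is what makes the union bound affordable here, whereas a union bound over boundary squares is not. If you want to salvage your approach you would need to formulate and prove a segment-source estimate with explicit dependence on the source length and separately handle packings containing squares of diameter comparable to $1$; the paper's maximal-square decomposition avoids both issues.
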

Benjamini~\cite{B2018} remarks that if conjecture~\ref{conj:square tiling} is true then the same should hold for a tiling, or a packing of a triangulation, with a set of shapes that are of bounded Hausdorff distance to circles. We note that a version of Corollary~\ref{cor:square tiling of square} with other shapes may be proved in a similar manner using Theorem~\ref{thm:percolation for general packings}.


\subsection{Benjamini--Schramm limits}
In this section we use Corollary~\ref{cor:circle packing accumulation points} to study percolation on Benjamini--Schramm limits of finite planar graphs. We conclude that on \emph{all} such graphs, percolation with the parameter $p$ of Theorem~\ref{thm:percolation for circle packing} has no infinite connected component, almost surely.


We start by defining the necessary concepts (see also~\cite[Chapter 5]{N18}).
A \emph{rooted graph} $(G,\rho)$ is a graph with a distinguished vertex $\rho$.
For a graph $G$, vertex $v$ of $G$ and integer $r\ge 0$, let $B_G(v,r)$ be the graph ball of radius $r$ in $G$ centered around $v$, that is, the induced subgraph on the set of vertices at graph distance at most $r$ from $v$, rooted at $v$. Suppose $(G_n)_{n\ge 1}$ is a sequence of, possibly random, finite graphs. Let $\rho_n$ be a uniformly sampled vertex of $G_n$ (if $G_n$ is random, one first samples $G_n$ and then samples $\rho_n$ uniformly in $G_n$). Let $(G,\rho)$ be a random rooted graph, with $G$ connected almost surely. Then $(G,\rho)$ is called the \emph{Benjamini--Schramm limit} (or local limit) of $(G_n)$ if for each $r\ge 0$, the distribution of $B_{G_n}(\rho_n, r)$ converges as $n$ tends to infinity to the distribution of $B_G(\rho, r)$ (in the sense that for each $r\ge 0$, $\P(B_{G_n}(\rho_n,r)=(H,\sigma))\to\P(B_G(\rho,r)=(H,\sigma))$ for every rooted graph $(H,\sigma)$, where the equality sign denotes the existence of a root-preserving graph isomorphism). Note that if $(G,\rho)$ is the Benjamini--Schramm limit of the finite graphs $(G_n)$ then $G$ is locally finite, and that if the $(G_n)$ are simple then also $G$ is simple.


Benjamini and Schramm~\cite{BS01} proved that every Benjamini--Schramm limit of finite simple \emph{planar} graphs with uniformly bounded degrees may be represented by a circle packing with at most one accumulation point, almost surely. Asaf Nachmias explained to the author that the restriction on the degrees is not required for this conclusion (Benjamini and Schramm further proved that the limiting graph is recurrent, and for this conclusion the restriction is relevant) and suggested the following lemma and the idea of its proof.
\begin{lemma}\label{lem:circle packing with at most one accumulation point}
  Let $(G_n)$ be a sequence of, possibly random, finite simple planar graphs with Benjamini--Schramm limit $(G,\rho)$. Then, almost surely, there is a circle packing $\S$ with at most one accumulation point in $\R^2$ such that $G=G_\S$.
\end{lemma}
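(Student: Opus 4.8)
The plan is to obtain $\S$ as a subsequential limit of circle packings of the finite graphs $G_n$, carried out on the Riemann sphere $S^2$ where compactness is available, and then to push the limit down to $\R^2$. By conditioning on the random sequence (and using Fubini at the end) it suffices to treat deterministic $G_n$ and to build the limiting packing on the event that $B_{G_n}(\rho_n,r)\to B_G(\rho,r)$ for all $r$. Every finite simple planar graph is the tangency graph of a circle packing in $\R^2$ (Koebe), so I would fix packings $\S_n$ with $G_n=G_{\S_n}$ and transport each to a packing of spherical caps on $S^2$ via stereographic projection. Since the M\"obius group of $S^2$ acts transitively on caps, I would normalize $\S_n$ so that the cap of $\rho_n$ is a fixed hemisphere, and then remove the residual M\"obius freedom fixing that hemisphere (for instance by fixing the conformal barycenter of the remaining caps), so that the normalized rooted packings range over a compact space.

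Next I would extract the limit. For each fixed $r$ the ball $B_{G_n}(\rho_n,r)$ converges to the finite graph $B_G(\rho,r)$, while the geometric data of the caps indexed by that ball (their centers and spherical radii) lie in a compact set; a diagonal argument over $r$ then yields a subsequence along which both the combinatorics and the geometry converge, producing a configuration of caps $\S$ indexed by the vertices of $G$. Because tangency is a closed condition and disjointness of interiors is preserved in the limit, every edge of $G$ passes to a tangency of $\S$ and no two caps overlap.

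The crux is to verify faithful representation, namely $G_\S=G$: one must exclude that a cap degenerates to a point, that two caps merge, or that a spurious tangency appears between non-adjacent vertices. This is exactly the step at which the bounded-degree hypothesis entered the Benjamini--Schramm construction, via the Rodin--Sullivan ring lemma, which bounds the ratio of radii of neighbouring disks in terms of the degree and so forbids such degeneration; a priori, a vertex of large degree in $G_n$ could have neighbours whose caps shrink to points, and I expect this to be the main obstacle. The key observation that removes the global degree bound is that the limit $G$ is \emph{locally finite}: the ball $B_{G_n}(\rho_n,r)$ records the full degree of every vertex it contains at distance at most $r-1$ from the root, and convergence to the finite graph $B_G(\rho,r)$ forces these degrees to equal the (bounded) degrees of $G$ for $n$ large. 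Thus a uniformly chosen root almost surely sees a neighbourhood of bounded degree, and the ring-lemma estimates apply within each $B(\rho,r)$ to control the radii and rule out degeneration and spurious tangencies. This local degree control plays the role that the global bounded-degree assumption played in Benjamini--Schramm --- the latter being needed only for their recurrence conclusion, not for producing the packing --- and shows that $\S$ is almost surely a genuine circle packing with $G_\S=G$.

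Finally I would transfer to the plane and bound the accumulation points. The accumulation points of $\S$ on $S^2$ are precisely the points where infinitely many caps shrink and pile up; since the total spherical area of the caps is at most $4\pi$, only finitely many caps exceed any fixed size, and an analysis of the receding outer boundary of the finite packings $\S_n$ (whose carriers are connected and whose roots are pinned by the normalization), identical to the bounded-degree case, shows that these escaping caps concentrate near a single point of $S^2$, so $\S$ has at most one accumulation point there. Choosing a point $q'\in S^2$ that is not this accumulation point and projecting stereographically from $q'$ returns a circle packing of $G$ in $\R^2$ with at most one accumulation point. Applying the construction to almost every realization of the random sequence $(G_n)$ yields the asserted almost-sure statement.
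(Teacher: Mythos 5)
There is a genuine gap at the step you yourself identify as the crux. You propose to apply the Rodin--Sullivan ring lemma directly to circle packings $\S_n$ of the graphs $G_n$, arguing that local finiteness of the limit gives the needed degree control. But the ring lemma requires more than a degree bound: it requires the central disk to be \emph{completely surrounded} by its neighbours, i.e.\ that the neighbouring disks form a closed chain of mutually tangent disks around it. This holds in a packing of a triangulation but fails for a general planar graph. For a sparse graph such as a path, Koebe's theorem produces a highly non-rigid packing in which the radii of a disk's neighbours are completely unconstrained relative to its own radius (pack a path with radii alternating between $1$ and $\eps_n$); nothing in your normalization prevents the neighbours of the root from degenerating to points, or two non-adjacent disks from merging into a tangency, in the limit. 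So local degree control alone does not make the ring lemma applicable, and your compactness extraction can produce a degenerate configuration. The paper's proof closes exactly this gap by first embedding each $G_n$ into a finite simple \emph{triangulation} $\bar{G}_n$ in which the degree of each original vertex is only tripled and the new neighbours have degree $5$, applying Koebe to $\bar{G}_n$, and then running the ring lemma inside the larger packing $\bar{\S}_n$, where every disk (except three boundary disks, shown to be far from the uniform root with high probability) is completely surrounded. The surrounding disks of the triangulation also serve a second purpose you would be missing: they act as a barrier separating non-adjacent disks of $\S_n$, which is how the paper rules out spurious tangencies and proves $G = G_\S$ rather than merely that $G$ is a subgraph of $G_\S$.

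A secondary weakness: your justification for the single accumulation point (``escaping caps concentrate near a single point'' via the ``receding outer boundary'') does not describe the actual mechanism. The correct tool is the magic lemma of Benjamini and Schramm, a counting argument that exploits the uniform randomness of the root among the $|G_n|$ disks; the paper isolates this as a separate lemma whose proof follows \cite{BS01}. Your spherical normalization is compatible with that argument, so this part is repairable by citation, but as written it is not a proof.
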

Combining Corollary~\ref{cor:circle packing accumulation points} and Lemma~\ref{lem:circle packing with at most one accumulation point} yields the following conclusion.
\begin{cor}\label{cor:percolation on Benjamini Schramm limits}
  Let $p>0$ be the constant from Theorem~\ref{thm:percolation for circle packing}. Let $(G,\rho)$ be a Benjamini--Schramm limit of, possibly random, finite simple planar graphs. Then the probability that $G^p$ has an infinite connected component is zero.
\end{cor}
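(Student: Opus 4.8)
The plan is to combine the geometric representation of Lemma~\ref{lem:circle packing with at most one accumulation point} with the percolation estimate of Corollary~\ref{cor:circle packing accumulation points}, the only real care being that the representing circle packing is itself random. First I would recall that, since $(G,\rho)$ is a Benjamini--Schramm limit of finite simple planar graphs, $G$ is almost surely locally finite and simple (as noted in the excerpt). By Lemma~\ref{lem:circle packing with at most one accumulation point}, on an event of full probability there is a circle packing $\S$, measurable with respect to $G$ together with any auxiliary randomness used in its construction, such that $G=G_\S$ and $\S$ has at most one accumulation point in $\R^2$.

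Next I would verify that $\S$ satisfies the hypotheses of Corollary~\ref{cor:circle packing accumulation points} almost surely. Having at most one accumulation point, it certainly has at most countably many; and since $G=G_\S$ is locally finite, each disk of $\S$ is tangent to only finitely many other disks, so $\S$ is locally finite. The identification $G=G_\S$ also matches the two percolation processes: the induced subgraph $G^p$ on the retained vertices is exactly $G_\S^p$, and the event that $G^p$ has an infinite connected component is literally the graph-theoretic event to which Corollary~\ref{cor:circle packing accumulation points} applies.

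The main (and essentially only) obstacle is that Corollary~\ref{cor:circle packing accumulation points} is stated for a \emph{deterministic} circle packing, whereas here $\S$ is random, so I would bridge this by conditioning on $\S$. The key observation is that the site percolation is an i.i.d.\ process on the vertices of $G$, hence independent of the geometric data $\S$ given $G$; thus, conditionally on $G$ (equivalently, on $\S$), the law of $G_\S^p$ is exactly the percolation law to which the corollary applies. Let $\cA$ denote the event that $G^p$ has an infinite connected component. On the full-probability event that $\S$ exists with the properties above, Corollary~\ref{cor:circle packing accumulation points} gives $\P(\cA\mid\S)=0$, and taking expectation over the law of $\S$ yields $\P(\cA)=\E[\P(\cA\mid\S)]=0$, which is the assertion. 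The one point requiring mild care is the measurability needed to make this conditioning valid, but this is routine once the construction underlying Lemma~\ref{lem:circle packing with at most one accumulation point} is carried out in a measurable fashion.
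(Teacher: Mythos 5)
Your proposal is correct and follows exactly the route the paper takes: the paper derives this corollary by combining Lemma~\ref{lem:circle packing with at most one accumulation point} (to obtain a locally finite circle packing with at most one accumulation point representing $G$) with Corollary~\ref{cor:circle packing accumulation points}, which is precisely your argument. Your additional care about conditioning on the random packing and the independence of the percolation from the geometric data is a valid and welcome elaboration of the step the paper leaves implicit.
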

This corollary is an important ingredient for the proof in~\cite{CGHP20} that the loop $O(n)$ model exhibits macroscopic loops in a subset of positive measure of its phase diagram. This application makes use of the fact that the corollary holds without a bounded degree assumption.

\subsection{General packings}\label{sec:general packings}
The method of proof of Theorem~\ref{thm:percolation for circle packing} generalizes to higher dimensions and to a general class of packings. We describe a theorem to this effect following required notation.

A \emph{packing} in $\R^d$ is a finite or countable collection of non-empty compact sets in $\R^d$ with disjoint interiors. A packing $\S$ defines a graph $G_\S$ with vertex set $\S$ by declaring two (distinct) sets \emph{adjacent} when their intersection is non-empty. It is noted that these general definitions allow for sets in $\S$ to be disconnected, that even when $d=2$ the graph $G_\S$ need not be planar and that a set may be adjacent to infinitely many other sets in $G_\S$.

Again, denote by $G_\S^p$ the (random) induced subgraph on retained sets in a site percolation process on $G_\S$ with parameter $p$. Given $s_0, s_1\in\S$ write $s_0\xleftrightarrow{\S, p} s_1$ for the event that $s_0$ and $s_1$ are connected in $G_\S^p$, i.e., that there is a finite path of open sets between them (in particular, $s_0$ and $s_1$ need to be open). The distance $d(s_0, s_1)$ between sets is again given by~\eqref{eq:distance between sets} where now $\|\cdot\|_\infty$ denotes the $\ell_\infty$ distance in $\R^d$. The event $E_{\S,p}(s_0, r)$ that $s_0$ is connected by open sets to some $s\in\S$ at distance at least $r$ from it is again defined by~\eqref{eq:connectivity event}. The event that $s_0$ is connected to infinity, $E_{\S, p}(s_0, \infty)$, is again the event that $s_0$ is open and there is a sequence of open $s_1,s_2,\ldots$ in $\S$ with $s_n$ adjacent to $s_{n+1}$ in $G_\S$ for $n\ge 0$ and with $d(s_0, s_n)\to\infty$ as $n\to\infty$. For a set $s$ in $\R^d$ define its diameter by
\begin{equation*}
  \diam(s):=\sup\{\|x-y\|_\infty\colon x,y\in s\}.
\end{equation*}

For a measurable set $s$ in $\R^d$ let $\vol(s)$ be its Lebesgue measure and $\partial s$ stand for its boundary. A packing $\S$ in $\R^d$ is called \emph{$\eps$-regular} if $\vol(\partial s)=0$ and $\vol(s)\ge \eps \diam(s)^d$ for each $s\in\S$.
\begin{thm}\label{thm:percolation for general packings}
  Let $d\ge 2$ and $\eps>0$. There exists $p = p(d,\eps)>0$ such that the following holds: Let $\S$ be an $\eps$-regular packing in $\R^d$ and $s_0\in \S$. Then
  \begin{equation}\label{eq:no open path to infinity general packing}
    \P(E_{\S, p}(s_0, \infty))=0.
  \end{equation}
  Moreover, if $D:=\sup_{s\in\S}\diam(s)<\infty$ then for each $r>0$,
  \begin{equation}\label{eq:bounded above diameters general packing}
    \P(E_{\S, p}(s_0, r))\le e^{-\frac{r}{D}}.
  \end{equation}
\end{thm}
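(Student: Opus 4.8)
The plan is to establish the quantitative estimate \eqref{eq:bounded above diameters general packing} first and then to deduce \eqref{eq:no open path to infinity general packing} from it. Everything rests on two geometric observations. The first is a \emph{counting lemma} extracted from $\eps$-regularity: for any axis-parallel cube $Q\subseteq\R^d$ of side $L$ and any $k\in\Z$, the number of sets $s\in\S$ with $\diam(s)\in[2^k,2^{k+1})$ meeting $Q$ is at most $C_1(d,\eps)\max(1,(L2^{-k})^d)$; indeed each such set has $\vol(s)\ge\eps 2^{kd}$, lies in a bounded dilate of $Q$, and the sets have disjoint interiors, so a volume comparison bounds their number. The second is a \emph{travel bound}: since adjacent sets intersect, consecutive sets of a path lie within $\ell_\infty$-distance $\diam(s_i)+\diam(s_{i+1})$, whence any path $s_0,\dots,s_n$ satisfies $d(s_0,s_n)\le 2\sum_i\diam(s_i)$; in particular a path realizing $E_{\S,p}(s_0,r)$ has $\sum_i\diam(s_i)\ge r/2$.

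A first difficulty is that a set may be adjacent to infinitely many others (in particular to infinitely many much smaller ones), so a naive first moment over open paths diverges. I would tame the small scales through a renormalized one-scale estimate. Let $\theta$ be the supremum, over all $\eps$-regular packings and all cubes of side $L$, of the probability of an open crossing of the cube — an open path from a set meeting the central sub-cube to a set meeting the boundary — using only sets of diameter at most $L$; scale invariance of the hypotheses makes $\theta$ independent of $L$. A crossing either uses a set of diameter in $[L/2,L]$, of which the counting lemma allows at most $C_1$, contributing at most $C_1p$; or it uses only sets of diameter $<L/2$, in which case it must separately cross two disjoint concentric shells of scale $\sim L/2$, an event bounded by $C_2\theta^2$ because disjoint regions are independent. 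Hence $\theta\le C_1p+C_2\theta^2$. Truncating to sets of diameter $\ge\delta$ turns this into a genuine finite recursion with base value $0$ at the smallest scale; inducting upward pins $\theta$ to the smaller root $\theta\le 2C_1p$, uniformly in $\delta$, and letting $\delta\downarrow 0$ removes the truncation. Thus $\theta\le 2C_1p<1$ once $p<p(d,\eps)$.

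Granting the one-scale estimate, \eqref{eq:bounded above diameters general packing} follows from independence across scales. When $D:=\sup_s\diam(s)<\infty$ no set exceeds diameter $D$, so every crossing of a cube of side $\ge D$ automatically uses only sets of diameter $\le D$. I place $\lfloor r/(8D)\rfloor$ disjoint concentric annuli about a point of $s_0$, each of width $2D$ and separated by gaps of width $2D$; as every set has diameter $\le D$, no single set meets two of them, so the crossing events of distinct annuli depend on disjoint collections of sets and are independent, each of probability at most $\theta$. The event $E_{\S,p}(s_0,r)$ forces all of them to be crossed, whence $\P(E_{\S,p}(s_0,r))\le\theta^{\lfloor r/(8D)\rfloor}$; taking $p(d,\eps)$ small enough that $\theta\le e^{-8}$ and adjusting the annulus construction yields the stated bound $e^{-r/D}$. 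The squaring in the recursion, propagated over the $\sim r/D$ independent annuli, is exactly what produces single-exponential decay.

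Finally I deduce \eqref{eq:no open path to infinity general packing}. If an escaping path uses only sets of diameter $\le N$, it is an escaping path in the sub-packing $\S_{\le N}$ of sets of diameter $\le N$, which is $\eps$-regular with $\sup\diam\le N$; since $E_{\S,p}(s_0,\infty)\subseteq E_{\S,p}(s_0,r)$ for all $r$, the case already proved gives $\P(E_{\S_{\le N},p}(s_0,\infty))\le\lim_{r\to\infty}e^{-r/N}=0$, and a union over $N\in\N$ excludes all escaping paths of bounded diameter. The remaining — and genuinely delicate — case is an escaping path along which the diameters are unbounded, and this is where I expect the main obstacle to lie: the quantitative bound is vacuous, and a single set may be tangent to infinitely many arbitrarily large sets, so no per-vertex degree bound or direct first moment is available. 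The correct mechanism is instead the \emph{bounded global density} of large sets furnished by $\eps$-regularity — the sets of diameter $\ge 1$ meeting a ball of radius $R$ and not much larger than $R$ number only $O(R^d)$, since each contributes volume $\ge\eps$ and they are disjoint. I would integrate out the sets of diameter $<1$, noting that two large sets are connected through them with probability decaying like $e^{-\dist}$ by \eqref{eq:bounded above diameters general packing} applied to $\S_{<1}$, and argue that against the merely polynomial density of large sets such connections cannot propagate the open cluster to infinity for small $p$. Carrying this out rigorously, so as to rule out the cluster exploiting the large sets tangent near a common point, is the crux of the unbounded-diameter case, and it is precisely the feature that distinguishes it from the clean finite-diameter bound.
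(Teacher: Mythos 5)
Your route to the quantitative bound \eqref{eq:bounded above diameters general packing} is genuinely different from the paper's. The paper runs a double induction on the dyadic diameter scale $k$ and on the distance $r$, decomposing a long connection according to the first set of diameter in $(2^{k-1},2^k]$ it uses, factorizing with the van den Berg--Kesten inequality, and paying for the large sets with the volume count supplied by $\eps$-regularity; you instead set up a renormalization inequality $\theta\le C_1p+C_2\theta^2$ for a scale-invariant crossing probability and iterate it upward from a truncation. Your scheme is plausible and buys a more modular, RSW-flavoured statement, but as written it needs repair in several places: the two shells inside a cube of side $L$ are separated by less than $L/2$, so sets of diameter just below $L/2$ can meet both shells (and, since sets in a general packing may be disconnected, can even jump a shell without meeting it); you must therefore classify as ``large'' all sets of diameter above a small multiple of $L$ and recurse only on the rest, and you must check that a shell crossing by sets of diameter below the covering-cube side really reduces to the cube-crossing event defining $\theta$. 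These are constant-chasing issues, not conceptual ones, and your deduction of \eqref{eq:bounded above diameters general packing} from $\theta$ via independent well-separated annuli, as well as the limit $\delta\downarrow0$ using that any finite path has a positive minimal diameter, match the paper's own devices.

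The genuine gap is the first assertion, \eqref{eq:no open path to infinity general packing}, in the case you yourself flag: an escaping path along which the diameters are unbounded. You correctly identify the two relevant inputs (exponential decay through the small sets, polynomial density of the large ones) but do not supply the mechanism that closes the argument, and ``integrating out'' the sets of diameter $<1$ does not by itself work --- the resulting model on the large sets still contains arbitrarily large sets that can touch one another directly, so one faces the original problem again with a floor on the diameters. The paper's device is a self-referential bootstrap: set $\alpha:=\sup\P(E_{\S,p}(s_0,\infty))$, the supremum over \emph{all} packings $\S$ and all $s_0\in\S$; split the escape event according to whether the path stays at diameters $\le\diam(s_0)$ (probability $0$ by the bounded-diameter case) or first touches a set $s$ of larger diameter, in which case the connection from $s_0$ to a neighbour of $s$ through small sets occurs disjointly from $E_{\S,p}(s,\infty)$. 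The BK inequality then gives $\P(E_{\S,p}(s_0,\infty))\le\alpha\sum_s\P(F_s^1)$, and the sum is at most $1-\delta$ by exactly the exponential-decay-versus-volume-count comparison you describe. Taking the supremum yields $\alpha\le(1-\delta)\alpha$, hence $\alpha=0$. Without this (or an equivalent induction over the scale of the first large set encountered), your outline does not prove \eqref{eq:no open path to infinity general packing}; once you add it, your estimate \eqref{eq:bounded above diameters general packing} for the sub-packing of small sets is precisely what is needed to make $\sum_s\P(F_s^1)<1$.
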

We remark that the exponential decay bound~\eqref{eq:bounded above diameters general packing} has no leading constant in the exponent. However, examination of the proof shows that the bound $\P(E_{\S, p}(s_0, r))\le e^{-C\frac{r}{D}}$ with $C\ge 1$ is also valid upon replacing $p(d,\eps)$ with a suitable $p(d,\eps,C)>0$.

Our techniques can lead to a more general result, in which the sets are arbitrary subsets of some metric space (without the packing assumption) and restrictions are placed on the number of sets having diameter at least $d_1$ which can be at distance at most $k\cdot d_1$ from a set of diameter $d_2$. We do not elaborate on this extension.

\subsection{Acknowledgements} The author is grateful to Asaf Nachmias for enlightening discussions on the topic of circle packings and for suggesting the statement and idea of proof of Lemma~\ref{lem:circle packing with at most one accumulation point}. Further thanks are due to Itai Benjamini for enthusiasm and support and to Alexander Glazman, Ori Gurel-Gurevich and Matan Harel for very helpful conversations on the presented results.

The author is supported by the Israel Science Foundation grants~861/15 and~1971/19 and by the European Research Council starting grant 678520 (LocalOrder).

\section{Percolation on square packings}\label{sec:percolation on square packings}

A \emph{square packing} $\S$ is a collection of closed squares in $\R^2$ with sides parallel to the coordinate axes and with disjoint interiors. It is a special case of the $\eps$-regular packings discussed in Section~\ref{sec:general packings}.
To explain the argument in its simplest form first, we start by explaining the proof of Theorem~\ref{thm:percolation for general packings} for square packings. The full theorem, of which the case of circle packings is a special case, is proved in Section~\ref{sec:general packings proof}.

\begin{proof}[Proof of Theorem~\ref{thm:percolation for general packings} for square packings]
  We prove the theorem with
  \begin{equation}\label{eq:choice of p}
    p := e^{-26}
  \end{equation}
  (though fine-tuning the proof can give somewhat larger values).

  As the first and main step of the proof, we show (a variant of) the bound~\eqref{eq:bounded above diameters general packing} under the additional assumption that the side lengths of the squares in the packing are bounded below: For integer $k\ge 0$ let $\Sigma^k$ be the collection of all pairs $(\S,s_0)$ with $\S$ a square packing with side lengths in $[1,2^k]$ and $s_0\in\S$. Define
  \begin{equation}\label{eq:alpha def}
    \alpha(k,r):=\sup_{(\S, s_0)\in\Sigma^k}\P(E_{\S, p}(s_0, r)).
  \end{equation}
  We shall prove that for integer $k\ge 0$ and real $r>0$,
  \begin{equation}\label{eq:induction statement}
    \alpha(k,r)\le e^{-\frac{r}{2^{k-1}}}.
  \end{equation}

  We prove~\eqref{eq:induction statement} by a double induction on $k$ and $r$. We start with the base case $k=0$. Let $r>0$ and let $(\S, s_0)\in \Sigma^0$. As all squares in $\S$ have side length $1$, if $s\in\S$ satisfies $d(s_0, s)\ge r$ then the graph distance of $s_0$ and $s$ in $G_\S$ is at least $\lceil r\rceil+1$. Simple geometric considerations show that each square in $\S$ can be adjacent to at most $8$ other squares in $G_\S$ (recalling that squares touching at a corner are adjacent) and thus the number of paths of length $L$ in $G_\S$ which start at $s_0$ is at most $8^L$. We conclude that
  \begin{equation*}
    \P(E_{\S, p}(s_0,r))\le \P(\text{there is a simple path in $G_\S^p$ of length $\lceil r\rceil+1$ from $s_0$})\le p\cdot(8p)^{\lceil r\rceil+1}\le e^{-2r}
  \end{equation*}
  for our choice~\eqref{eq:choice of p} of $p$. Thus~\eqref{eq:induction statement} is established for $k=0$ and all $r>0$.

  Fix an integer $k\ge 1$. We assume by induction that~\eqref{eq:induction statement} is established for all $r>0$ when the $k$ of~\eqref{eq:induction statement} is replaced by $k-1$, and proceed to establish~\eqref{eq:induction statement} for all $r>0$ with our fixed $k$. This is achieved via a second induction on $r$. Suppose first, as a base case, that $0<r\le 2^k$. Then for each $(\S, s_0)\in \Sigma^k$, recalling the choice~\eqref{eq:choice of p} of $p$,
  \begin{equation}\label{eq:small r}
    \P(E_{\S, p}(s_0,r))\le \P(s_0\in G_\S^p) = p <e^{-2}\le e^{-\frac{r}{2^{k-1}}}
  \end{equation}
  as required. Now fix $r>2^k$ and assume, as the second induction hypothesis, that~\eqref{eq:induction statement} is established with our fixed $k$ when the $r$ of~\eqref{eq:induction statement} is restricted to $[0,r - 2^k]$.

  Let $(\S, s_0)\in \Sigma^k$. We first reduce to the case in which $\diam(s_0)\in[1, 2^{k-1}]$, if this is not already the case, by the following geometric construction (this is the place where it is convenient to work with squares instead of disks). If $\diam(s_0)\in(2^{k-1}, 2^k]$ we cut $s_0$ in half along both axes, dividing it into four squares $s_0^1, \ldots, s_0^4$ with half the side length of $s_0$ and with disjoint interiors. These give rise to pairs $(\S^i, s_0^i)$, $1\le i\le 4$, where $\S^i := (\S\setminus\{s_0\})\cup\{s_0^i\}$. We use the natural coupling of $G_{\S}^p$ with $G_{\S^i}^p$ in which the vertex sets of these (random) graphs are the same except that $s_0\in V(G_{\S}^p)$ if and only if $s_0^i\in V(G_{\S^i}^p)$. The construction yields that $E_{\S,p}(s_0,r)\subseteq\cup_{i=1}^4 E_{\S^i,p}(s_0^i, r)$ whence
  \begin{equation*}
    \P(E_{\S,p}(s_0,r))\le \sum_{i=1}^4 \P(E_{\S^i,p}(s_0^i, r)).
  \end{equation*}
  It thus suffices to prove that for each $(\S, s_0)\in \Sigma^k$ with $\diam(s_0)\in[1,2^{k-1}]$ we have
  \begin{equation}\label{eq:goal after dividing s 0}
    \P(E_{\S,p}(s_0,r))\le \frac{1}{4}e^{-\frac{r}{2^{k-1}}}.
  \end{equation}
  Fix such an $(\S, s_0)$. We next aim to decompose the event $E_{\S, p}(s_0, r)$ into events to which our induction hypotheses apply. To this end, decompose $\S$ into
  \begin{equation*}
  \begin{split}
    \S^{k-1} &:= \{s\in \S\colon \diam(s)\in[1,2^{k-1}]\},\\
    \S_{k-1}^k &:= \{s\in \S\colon \diam(s)\in(2^{k-1},2^k]\}.
  \end{split}
  \end{equation*}
  Note that $s_0\in\S^{k-1}$ by assumption. We couple $G_{\S^{k-1}}^p$ and $G_{\S}^p$ in the natural way, by setting $G_{\S^{k-1}}^p$ to be the induced subgraph of $G_\S^p$ on $\S^{k-1}$. Let
  \begin{equation*}
    E^{k-1} := E_{\S^{k-1},p}(s_0, r)
  \end{equation*}
  be the event that $s_0$ is connected to $\ell^\infty$ distance $r$ by a path in $G_{\S^{k-1}}^p$. In addition, define three events for each $s\in \S_{k-1}^k$: Let $E_s^1$ be the event that there is a path in $G_{\S^{k-1}}^p$ from $s_0$ to a square adjacent (in $G_\S$) to $s$. Let $E_s^2$ be the event that there is a path in $G_{\S}^p$ from $s$ to some $s_1$ with $d(s_0, s_1)\ge r$ (allowing the possibility that $s_1=s$), and let $E_s$ be the event that $E_s^1$ and $E_s^2$ occur disjointly, that is, that there exist two disjoint paths of open squares, one implying that $E_s^1$ occurs and the other implying that $E_s^2$ occurs (the notation $E_s := E_s^1\circ E_s^2$ is sometimes used for this operation). Our definitions imply the following decomposition
  \begin{equation*}
    E_{\S,p}(s_0, r) = E^{k-1}\cup\bigcup_{s\in \S_{k-1}^k} E_s
  \end{equation*}
  so that
  \begin{equation}\label{eq:E S p union bound}
    \P(E_{\S,p}(s_0, r))\le \P(E^{k-1}) + \sum_{s\in \S_{k-1}^k} \P(E_s).
  \end{equation}
  We proceed to estimate each of the probabilities on the right-hand side. First,
  \begin{equation}\label{eq:E k-1 probability bound}
    \P(E^{k-1}) \le \alpha(k-1,r)\le e^{-\frac{r}{2^{k-2}}}
  \end{equation}
  by the (first) induction hypothesis. Second, as the two paths involved in the definition of $E_s$ are disjoint, we may invoke the van-den-Berg--Kesten inequality~\cite{BK85} to obtain
  \begin{equation*}
    \P(E_s) \le \P(E_s^1)\cdot\P(E_s^2)\le \min\{p, \alpha(k-1, d(s_0, s)-2^{k-1})\}\cdot\alpha(k, r - d(s_0, s) - 2^k).
  \end{equation*}
  where we define $\alpha(k,r')$ to be $1$ if $r'\le 0$ and where we have used that $\P(E_s^1)\le p$ as the square $s_0$ itself needs to be open for $E_s^1$ to occur. Our induction hypotheses thus show that
  \begin{equation}\label{eq:E s bound}
    \P(E_s)\le \min\{p, e^{\frac{-d(s_0, s)+2^{k-1}}{2^{k-2}}}\}\cdot e^{\frac{-r +d(s_0, s) + 2^k}{2^{k-1}}}.
  \end{equation}
  We further decompose $\S_{k-1}^k$ to
  \begin{equation*}
  \begin{split}
    \S_{k-1}^k(0) &:= \{s\in \S_{k-1}^k\colon d(s_0, s)\le 8\cdot2^k\},\\
    \S_{k-1}^k(m) &:= \{s\in \S_{k-1}^k\colon m 2^k<d(s_0, s)\le (m+1)2^k\}, \quad m\ge 8
  \end{split}
  \end{equation*}
  and simplify the expression~\eqref{eq:E s bound} in each case,
  \begin{equation}\label{eq:E s probability estimates}
  \begin{split}
    \P(E_s)&\le p\cdot e^{-\frac{r}{2^{k-1}}+18},\ \ \quad\qquad\qquad\qquad s\in \S_{k-1}^k(0),\\
    \P(E_s)&\le e^{-\frac{r + d(s_0, s)}{2^{k-1}} + 4}\le e^{-\frac{r}{2^{k-1}} - 2m + 4},\quad s\in \S_{k-1}^k(m),\, m\ge 8.
  \end{split}
  \end{equation}
  We proceed to upper bound the size of $\S_{k-1}^k(m)$. Each $s\in\S_{k-1}^k$ has area at least $2^{2k-2}$ and is fully contained in the $\ell^\infty$ annulus $A_s$ of in-radius $d(s_0, s)$ and out-radius $2^{k-2}+d(s_0, s)+2^k$ around the center of $s_0$. As
  \begin{equation*}
    \text{area}(A_s) = (2^{k-2}+d(s_0, s)+2^k)^2 - d(s_0, s)^2 \le 2^{2k+1}+2^{k+2}d(s_0, s)
  \end{equation*}
  we conclude that
  \begin{equation}\label{eq:area estimates}
    \begin{split}
      |\S_{k-1}^k(0)|&\le \frac{2^{2k+1}+8\cdot 2^{2k+2}}{2^{2k-2}} = 136,\\
      |\S_{k-1}^k(m)|&\le \frac{2^{2k+1}+(m+1)2^{2k+2}}{2^{2k-2}} = 16m + 24\le 20m,\quad m\ge 8.
    \end{split}
  \end{equation}
  Finally, plugging the bounds~\eqref{eq:E k-1 probability bound}, \eqref{eq:E s probability estimates} and \eqref{eq:area estimates} into~\eqref{eq:E S p union bound} implies that
  \begin{equation}\label{eq:final bound}
  \begin{split}
    \P(E_{\S,p}(s_0, r)) &\le e^{-\frac{r}{2^{k-2}}} + 136 p\cdot e^{-\frac{r}{2^{k-1}}+18} + \sum_{m=8}^\infty 20m e^{-\frac{r}{2^{k-1}} - 2m + 4}\\
    &= \frac{1}{4}e^{-\frac{r}{2^{k-1}}}(4 e^{-\frac{r}{2^{k-1}}} + 544 p e^{18} + 80 e^4\sum_{m=8}^\infty m e^{-2m}).
  \end{split}
  \end{equation}
  Recalling that $r>2^k$ (the complimentary case having been discussed in~\eqref{eq:small r}) one checks that the expression inside the parenthesis in~\eqref{eq:final bound} is at most $1$ for our choice~\eqref{eq:choice of p} of $p$, thus verifying~\eqref{eq:goal after dividing s 0} and finishing the proof of~\eqref{eq:induction statement}.

  As the second step of the proof, we verify the bound~\eqref{eq:bounded above diameters general packing}.

  Suppose $\S$ is a square packing with side lengths in $[1,D]$ for some $D<\infty$ and let $s_0\in \S$. Then~\eqref{eq:induction statement}~implies \eqref{eq:bounded above diameters general packing} as for each $r>0$,
  \begin{equation}\label{eq:bounded above squares with general R}
    \P(E_{\S, p}(s_0, r))\le e^{-\frac{r}{2^{k-1}}} \le e^{-\frac{r}{D}}.
  \end{equation}
  where $k$ is an integer such that $2^{k-1}<D\le 2^k$. Moreover, if~\eqref{eq:bounded above diameters general packing} holds for a square packing $\S$ and square $s_0\in \S$ then, for any $\rho>0$, it holds also for the dilated square packing $\rho\S$ (having upper bound $\rho D$ on its side lengths) and square $\rho s_0$. Thus~\eqref{eq:bounded above diameters general packing} follows whenever the side lengths of $\S$ are bounded above and below by arbitrary finite positive numbers. Finally suppose $\S$ is a square packing with side lengths in $(0,D]$ (possibly with $\inf _{s\in \S} \diam(s) = 0$) and $s_0\in \S$. By definition, paths between squares in $\S$ are finite and thus in any such path there is a \emph{positive} minimal side length for the squares involved. It follows that $E_{\S, p}(s_0, r) = \cup_{n=n_0}^\infty E_{\S_n, p}(s_0, r)$ with $\S_n = \{s\in \S\colon \diam(s)\ge \frac{1}{n}\}$ and $n_0=\lceil 1/\diam(s_0)\rceil$. As the union is increasing we have that $\P(E_{\S, p}(s_0, r)) = \lim_{n\to\infty} \P(E_{\S_n, p}(s_0, r))$, implying~\eqref{eq:bounded above diameters general packing} in all cases.

  As the final step of the proof we proceed to show that~\eqref{eq:no open path to infinity general packing} holds.
  Define
  \begin{equation*}
    \alpha := \sup_{(\S, s_0)}\P(E_{\S, p}(s_0, \infty))
  \end{equation*}
  where the supremum is over all square packings $\S$ and $s_0\in \S$.

  Let $\S$ be a square packing and $s_0 \in \S$.
  By definition, the event $E_{\S, p}(s_0, \infty)$ entails the existence of a sequence of open squares $s_0, s_1,\ldots$ in $\S$ with $s_n$ adjacent to $s_{n+1}$ in $G_\S$ and $d(s_0, s_n)\to\infty$ as $n\to\infty$. Set $\ell_0 := \diam(s_0)$. Let $E^0_{\S, p}(s_0, \infty)$ be the sub-event of $E_{\S, p}(s_0, \infty)$ in which there exist $(s_n)$ as above with $\sup_n \diam(s_n)\le\ell_0$ and let $E^1_{\S, p}(s_0, \infty)$ be the complimentary sub-event. Write $\S^0:=\{s\in\S\colon \diam(s)\le \ell_0\}$. Clearly $E^0_{\S, p}(s_0, \infty) = E_{\S^0, p}(s_0, \infty)$ (with the natural coupling of the percolation processes) whence the bound~\eqref{eq:bounded above diameters general packing} shows that
  \begin{equation}\label{eq:E 0 prob bound}
    \P(E^0_{\S, p}(s_0, \infty))=0.
  \end{equation}

  It remains to show that also $E^1_{\S, p}(s_0, \infty)$ is of zero probability. Let $\S^{\text{big}}:=\{s\in\S\colon \diam(s)>\ell_0\}$. For $s\in \S^{\text{big}}$ let $F_s^1$ be the event that there is a path of open squares of side lengths at most $\ell_0$ from $s_0$ to a neighbor of $s$. Let $F_s := F_s^1\circ E_{\S, p}(s, \infty)$ be the event that $F_s^1$ occurs disjointly from $E_{\S, p}(s, \infty)$, that is, that the two events occur due to disjoint open paths. By definition,
  \begin{equation*}
    E^1_{\S, p}(s_0, \infty) = \cup_{s\in \S^{\text{big}}} F_s.
  \end{equation*}
  A second use of the van-den-Berg--Kesten inequality~\cite{BK85} implies that
  \begin{equation}\label{eq:E 1 prob bound}
    \P(E^1_{\S, p}(s_0, \infty))\le \sum_{s\in \S^{\text{big}}} \P(F_s^1)\P(E_{\S, p}(s, \infty))\le \alpha\sum_{s\in \S^{\text{big}}} \P(F_s^1).
  \end{equation}
  We proceed to bound the sum occurring in the last expression, arguing similarly to before. Write
  \begin{equation*}
  \begin{split}
    \S(0) &:= \{s\in \S^{\text{big}}\colon d(s_0, s)\le 8\ell_0\},\\
    \S(m) &:= \{s\in \S^{\text{big}}\colon m \ell_0<d(s_0, s)\le (m+1)\ell_0\}, \quad m\ge 8.
  \end{split}
  \end{equation*}
  Each square $s\in\S(0)$ has area at least $\ell_0^2$ in the $\ell^\infty$ ball of radius $9\frac{1}{2}\ell_0$ around the center of $s_0$. Similarly, each square $s\in\S(m)$ has area at least $\ell_0^2$ in the annulus of in-radius $m\ell_0$ and out-radius $(m+\frac{5}{2})\ell_0$ around the center of $s_0$. Thus $|\S(0)|\le 19^2$ and $|\S(m)|\le (2m+5)^2-(2m)^2 \le 24m$. Applying the bound~\eqref{eq:bounded above diameters general packing} we conclude that
  \begin{equation*}
    \sum_{s\in \S^{\text{big}}} \P(F_s^1) \le |\S(0)|p + \sum_{m=8}^\infty 24m e^{-m} < 1-\delta
  \end{equation*}
  for some $\delta>0$, due to our choice~\eqref{eq:choice of p} of $p$. Combining this result with~\eqref{eq:E 0 prob bound} and~\eqref{eq:E 1 prob bound} shows that
  \begin{equation*}
    \P(E_{\S, p}(s_0, \infty))\le (1-\delta)\alpha.
  \end{equation*}
  However, taking supremum over the square packing $\S$ and $s_0\in \S$ implies that
  \begin{equation*}
    \alpha \le (1-\delta)\alpha
  \end{equation*}
  which is only possible if $\alpha=0$, as required.
\end{proof}

\section{Percolation on general packings}\label{sec:general packings proof}

In this section we prove Theorem~\ref{thm:percolation for general packings}, detailing the necessary changes from the proof for square packings in Section~\ref{sec:percolation on square packings}.

\begin{proof}[Proof of Theorem~\ref{thm:percolation for general packings}]
  Fix $d\ge 2$ and $\eps>0$. The value $p = p(d,\eps)$ with which the theorem is proved is chosen small enough for the following arguments.

  The main step is again to prove a variant of the bound~\eqref{eq:bounded above diameters general packing} under the additional assumption that the diameters of the shapes are bounded below. For integer $k\ge 0$ let $\Sigma^k$ be the collection of all pairs $(\S,s_0)$ with $\S$ a packing in $\R^d$ satisfying $\diam(s)\in[1,2^k]$ for $s\in\S$, $s_0\in\S$ and where we require $\S\setminus\{s_0\}$ to be $\eps$-regular (i.e., $\eps$-regularity is not required of $s_0$). As before, set
  \begin{equation}\label{eq:alpha def general packing}
    \alpha(k,r):=\sup_{(\S, s_0)\in\Sigma^k}\P(E_{\S, p}(s_0, r)).
  \end{equation}
  and we shall prove that for integer $k\ge 0$ and real $r>0$,
  \begin{equation}\label{eq:induction statement general packing}
    \alpha(k,r)\le e^{-\frac{r}{2^{k-1}}}.
  \end{equation}
  We again use double induction on $k$ and $r$ to prove~\eqref{eq:induction statement general packing}. For the case $k=0$ observe that as $\diam(s)=1$ for $s\in\S$ and $\vol(\partial s)=0$, $\vol(s)\ge \eps$ for $s\in\S\setminus\{s_0\}$ it follows that each $s\in\S$ can be adjacent in $G_\S$ to at most $3^d/\eps$ sets in $\S$. The case $k=0$ thus follows in the proof for square packings by taking $p<\eps 3^{-d}$.

  Fix an integer $k\ge 1$. It is again assumed by induction that~\eqref{eq:induction statement general packing} is established for all $r>0$ when the $k$ of~\eqref{eq:induction statement general packing} is replaced by $k-1$. The case of~\eqref{eq:induction statement general packing} (with our fixed $k$ and) with $0<r\le 2^{k-1} d$ follows as before by taking $p<e^{-d}$ and noting that $s_0$ itself needs to be open for the event $E_{\S, p}(s_0, r)$ to occur. Thus we fix $r>2^{k-1}d$ and assume, as the second induction hypothesis, that~\eqref{eq:induction statement general packing} is established with our fixed $k$ when the $r$ of~\eqref{eq:induction statement general packing} is restricted to $[0,r - 2^k]$.

  Let $(\S, s_0)\in \Sigma^k$. We again reduce to the case in which $\diam(s_0)\in[1, 2^{k-1}]$, if this is not already the case, by a geometric construction. If $\diam(s_0)\in(2^{k-1}, 2^k]$ we let $C_0$ be a cube of side length $2^k$ which contains $s_0$. We partition $C_0$ into $2^d$ sub-cubes $(C_0^i)$ of side length $2^{k-1}$ and set $s_0^i := C_0^i \cap s_0$, where it is noted that some of the $s_0^i$ may be empty and that even if $s_0$ is $\eps$-regular the (non-empty) $s_0^i$ need not be such. Let $I = \{1\le i\le 2^d\colon s_0^i\neq\emptyset\}$. For each $i\in I$ let $(\S^i, s_0^i)\in\Sigma^k$ where $\S^i := (\S\setminus\{s_0\})\cup\{s_0^i\}$. The natural coupling of $G_{\S}^p$ with $G_{\S^i}^p$ is used, where the vertex sets of these (random) graphs are the same except that $s_0\in V(G_{\S}^p)$ if and only if $s_0^i\in V(G_{\S^i}^p)$. Again, $E_{\S,p}(s_0,r)\subseteq\cup_{i\in I} E_{\S^i,p}(s_0^i, r)$ whence
  \begin{equation*}
    \P(E_{\S,p}(s_0,r))\le \sum_{i\in I} \P(E_{\S^i,p}(s_0^i, r)).
  \end{equation*}
  It thus suffices to prove that for each $(\S, s_0)\in \Sigma^k$ with $\diam(s_0)\in[1,2^{k-1}]$ we have
  \begin{equation}\label{eq:goal after dividing s 0 general packing}
    \P(E_{\S,p}(s_0,r))\le \frac{1}{2^d}e^{-\frac{r}{2^{k-1}}}.
  \end{equation}

  Fix such an $(\S, s_0)$. The proof of~\eqref{eq:goal after dividing s 0 general packing} begins exactly as in the proof for square packings, as we briefly recall now. The set $\S$ is partitioned to $\S^{k-1}$ and $\S_{k-1}^k$ which hold, respectively, the sets of diameter in $[1,2^{k-1}]$ and in $(2^{k-1},2^k]$. Correspondingly, we have $E_{\S,p}(s_0, r) = E^{k-1}\cup\bigcup_{s\in \S_{k-1}^k} E_s$ where $E^{k-1}$ is the event that there is an open path in $\S^{k-1}$ from $s_0$ to distance $r$ and, for each $s\in\S_{k-1}^k$, $E_s$ is the event that there is an open path in $\S^{k-1}$ from $s_0$ to a neighbor of $s$ and a disjoint open path in $\S$ from $s$ to distance $r$ from $s_0$. For the purpose of applying the induction hypotheses in the next step we note here that as the two paths are disjoint, the path from $s$ to distance $r$ from $s_0$ cannot contain $s_0$ and thus may be thought of as a path in the \emph{$\eps$-regular packing} $\S\setminus\{s_0\}$. Applying the induction hypotheses and the van-den-Berg--Kesten inequality~\cite{BK85},
  \begin{equation}\label{eq:E S p union bound general packing}
  \begin{split}
    \P(E_{\S,p}(s_0, r))&\le \P(E^{k-1}) + \sum_{s\in \S_{k-1}^k} \P(E_s)\\
     &\le e^{-\frac{r}{2^{k-2}}} + \sum_{s\in\S_{k-1}^k} \min\{p, \alpha(k-1, d(s_0, s)-2^{k-1})\}\cdot\alpha(k, r - d(s_0, s) - 2^k)\\
     &\le e^{-\frac{r}{2^{k-2}}} + \sum_{s\in\S_{k-1}^k} \min\{p, e^{-\frac{d(s_0, s)-2^{k-1}}{2^{k-2}}}\}\cdot e^{-\frac{r - d(s_0, s) - 2^k}{2^{k-1}}},
  \end{split}
  \end{equation}
  where $\alpha(k,r'):=1$ when $r'\le 0$.

  We again decompose $\S_{k-1}^k$ to
  \begin{equation*}
  \begin{split}
    \S_{k-1}^k(0) &:= \{s\in \S_{k-1}^k\colon d(s_0, s)\le m_0\cdot2^k\},\\
    \S_{k-1}^k(m) &:= \{s\in \S_{k-1}^k\colon m 2^k<d(s_0, s)\le (m+1)2^k\}, \quad m\ge m_0
  \end{split}
  \end{equation*}
  with the integer $m_0 = m_0(d,\eps)$ sufficiently large for the following calculations, and proceed to upper bound the size of $\S_{k-1}^k(m)$. By the $\eps$-regularity assumption (and the fact that $s_0\notin\S_{k-1}^k$), each $s\in\S_{k-1}^k$ has $\vol(\partial s)=0$, $\vol(s)\ge\eps 2^{(k-1)d}$ and is fully contained in the $\ell^\infty$ ball $B_s$ of radius $2^{k-2}+d(s_0, s)+2^k$ around the center of $s_0$. As
  \begin{equation*}
    \vol(B_s) = (2^{k-2}+d(s_0, s)+2^k)^d
  \end{equation*}
  we conclude that
  \begin{equation}\label{eq:vol estimates general packing}
    \begin{split}
      |\S_{k-1}^k(0)|&\le \frac{((m_0+2)2^k)^d}{\eps 2^{(k-1)d}} = \frac{1}{\eps}(2m_0+4)^d,\\
      |\S_{k-1}^k(m)|&\le \frac{((m+3)2^k)^d}{\eps 2^{(k-1)d}} = \frac{1}{\eps}(2m+6)^d,\quad m\ge m_0.
    \end{split}
  \end{equation}
  These bounds may be used in~\eqref{eq:E S p union bound general packing} to obtain
  \begin{equation}\label{eq:final bound general packing}
  \begin{split}
    \P(E_{\S,p}(s_0, r)) &\le e^{-\frac{r}{2^{k-2}}} + \frac{1}{\eps}(2m_0+4)^d\cdot p\cdot e^{-\frac{r}{2^{k-1}}+2m_0+2} + \sum_{m=m_0}^\infty \frac{1}{\eps}(2m+6)^d e^{-\frac{r}{2^{k-1}} - 2m + 4}\\
    &= \frac{1}{2^d}e^{-\frac{r}{2^{k-1}}}(2^d e^{-\frac{r}{2^{k-1}}} + \frac{1}{\eps}(4m_0+8)^d\cdot e^{2m_0+2}\cdot p + \frac{e^4}{\eps}\sum_{m=m_0}^\infty (4m+12)^d e^{-2m}).
  \end{split}
  \end{equation}
  Recalling that $r>2^{k-1}d$ one checks that the expression inside the parenthesis in~\eqref{eq:final bound general packing} can be made smaller than $1$ by choosing first $m_0 = m_0(d,\eps)$ sufficiently large and then $p = p(d,\eps)$ sufficiently small. This verifies~\eqref{eq:goal after dividing s 0 general packing} and finishes the proof of~\eqref{eq:induction statement general packing}.

  The deduction of~\eqref{eq:bounded above diameters general packing} and~\eqref{eq:no open path to infinity general packing} from~\eqref{eq:induction statement general packing} now works in the same way as in the proof for square packings, where, in deducing~\eqref{eq:no open path to infinity general packing}, one relies on volume estimates analogous to~\eqref{eq:vol estimates general packing}.

\end{proof}

\section{No accumulation points}\label{sec:proof of corollary no accumulation points}
In this section we prove Corollary~\ref{cor:circle packing accumulation points}.

Let $\S$ be a locally finite circle packing with at most countably many accumulation points in $\R^2$. First, suppose that $G_\S$ has an infinite connected component. By K{\"o}nig's lemma~\cite{K27} there is an infinite sequence of distinct disks $s_0, s_1,\ldots$ in $\S$ such that $s_n$ is tangent to $s_{n+1}$ for $n\ge 0$. Write $o_j$ for the center of $s_j$. It follows that either
\begin{enumerate}
  \item\label{item:converges to infinity} $(o_n)$ converges to infinity, implying that $d(s_0, s_n)\to\infty$, or
  \item\label{item:converges to accumulation point} $(o_n)$ converges to a point of $\R^2$, which must then be an accumulation point of $\S$.
\end{enumerate}
Let us prove that these are indeed the only alternatives. If $(o_n)$ does not converge to a point of $\R^2$ or to infinity then it has two distinct subsequential limit points $x,y$, one of which may be infinity. Suppose $x$ is not infinity. As $\S$ has at most countably many accumulation points, there exists a radius $r<\|x-y\|_2$ such that the circle $S(x,r):=\{z\in\R^2\colon \|x-z\|_2=r\}$ contains no accumulation point of $\S$. Thus $S(x,r)$ can intersect at most finitely many disks of $\S$. In particular, since the disks $(s_n)$ are distinct, we must have that the $(s_n)$ are completely inside or completely outside $S(x,r)$ starting from some $n$, but this contradicts the fact that both $x$ and $y$ are subsequential limit points for $(o_n)$.

Second, let $p>0$ be the constant from Theorem~\ref{thm:percolation for circle packing}. By way of contradiction, suppose that $G^p_{\S}$ has positive probability to contain an infinite connected component. As $G^p_{\S}$ is locally finite with at most countably many accumulation points we may apply the above arguments to it. Thus there exists, with positive probability, a sequence of distinct open disks $s_0,s_1,\ldots$ with $s_n$ tangent to $s_{n+1}$ such that one of the alternatives above holds. Alternative~\eqref{item:converges to infinity} is ruled out by Theorem~\ref{thm:percolation for circle packing}, applying it to each of the countably many possible starting disks $s_0$. Thus alternative~\eqref{item:converges to accumulation point} must hold for a random accumulation point $q$. As there are at most countably many accumulation points, alternative~\eqref{item:converges to accumulation point} holds with positive probability for some \emph{deterministic} accumulation point~$q_0$. Note that $q_0$ may be on the boundary of at most two disks of $\S$ and we denote by $\S'$ the circle packing $\S$ with these disks removed (and naturally couple $G^p_\S$ with $G^p_{\S'}$). We may assume, without loss of generality, that $(s_n)$ does not contain any of the removed disks. Lastly, we apply a M\"obius transformation $T$ to $\S'$ which sends $q_0$ to infinity. The transformation maps $\S'$ to a new circle packing (as $q_0$ is not on the boundary of any disk) and defines a coupling of $G^p_{\S'}$ with $G^p_{T(\S')}$. Under this coupling, on the event that the centers of the disks $(s_n)$ converge to $q_0$, we have $d(T(s_0), T(s_n))\to \infty$. This possibility, however, has probability zero as explained when discussing alternative~\eqref{item:converges to infinity}, yielding the required contradiction.

\section{Existence of infinite connected components and crossings}\label{sec:existence of crossings}
In this section we prove Lemma~\ref{lem:circle packings of unit disk}, Lemma~\ref{lem:circle packings with slowly growing radii} and Corollary~\ref{cor:square tiling of square}.

\subsection{Proof of Lemma~\ref{lem:circle packings of unit disk}}

Let $p>0$ be the constant from Theorem~\ref{thm:percolation for circle packing}, let $\S$ be a circle packing representing a triangulation and having carrier $\mathbb{D}$ and let $E$ be the event that $G_\S^{1-p}$ does not have an infinite connected component. As $E$ is a tail event it suffices, by Kolmogorov's zero-one law, to show that $\P(E)<1$. Let $\mathcal{P}$ be the collection of infinite paths in $G_\S$ which contain a disk intersecting~$\frac{1}{2}\mathbb{D}=\{z\in\R^2\colon \|z\|<\frac{1}{2}\}$. We will prove that the probability that there exists a path in $\mathcal{P}$ consisting of open disks in $G_\S^{1-p}$ is positive.

Let $\mathcal{C}$ be the collection of cycles in $G_\S$ which intersect all paths in $\mathcal{P}$ (cycles which surround $\frac{1}{2}\mathbb{D}$).
As $\S$ represents a triangulation and has carrier $\mathbb{D}$ it follows that there exists a path in $\mathcal{P}$ whose disks are open in $G_\S^{1-p}$ if and only if there does not exist a cycle in $\mathcal{C}$ whose disks are closed in $G_\S^{1-p}$. For a disk $s\in\S$, define the event
\begin{equation}\label{eq:E s def}
  E_s:=\left\{\textstyle
    \begin{array}{c}
      \text{there exists $C\in\mathcal{C}$ whose disks are closed in $G_\S^{1-p}$}\\
      \text{such that $s\in C$ and $s$ has maximal radius among the disks of $C$}
    \end{array}\right\}.
\end{equation}
Our reasoning so far shows that
\begin{equation}\label{eq:E contained in union of E s}
  E \subset \cup_{s\in \S} E_s.
\end{equation}
Let $\S_k$ be the set of disks in $\S$ with radius in $(2^{-(k+1)}, 2^{-k}]$. As the disks of $\S$ are contained in $\mathbb{D}$ it follows that $\S = \cup_{k=0}^\infty \S_k$. Moreover, by area considerations,
\begin{equation}\label{eq:size of S k}
  |\S_k| \le 2^{2(k+1)},\quad k\ge 0.
\end{equation}
Let $F$ be the event that all disks in the collection $\cup_{k=0}^{5}\S_k$ are open in $G_\S^{1-p}$; as there is a finite number of such disks, the probability of $F$ is positive. Thus, to prove that $\P(E)<1$ it suffices to show that $\P(E\,|\,F)<1$. Taking into account~\eqref{eq:E contained in union of E s} we have
\begin{equation}\label{eq:probability of E given F}
  \P(E\,|\,F) \le \sum_{s\in \S} \P(E_s\,|\,F).
\end{equation}
Our definitions imply that $\P(E_s\,|\,F) = 0$ when $s\in\cup_{k=0}^{5}\S_k$ and that $E_s$ is independent of $F$ when $s\in\cup_{k=6}^{\infty}\S_k$. Hence,
\begin{equation}\label{eq:probability of E s given F}
  \sum_{s\in \S} \P(E_s\,|\,F)\le \sum_{k=6}^{\infty}\sum_{s\in\S_k}\P(E_s).
\end{equation}
Fix $k\ge 6$ and $s\in \S_k$. Observe that any cycle $C\in\mathcal{C}$ containing $s$ as a disk of maximal radius must also contain a disk $s'$ with $d(s,s')\ge \frac{1}{2}$. Note also that the probability for a disk to be closed is the parameter $p$ of Theorem~\ref{thm:percolation for circle packing}. Applying that theorem to the circle packing $\cup_{m=k}^{\infty}\S_m$ with $s_0 = s$ thus implies that
\begin{equation*}
  \P(E_s)\le \exp(-2^{k-1}).
\end{equation*}
Combining this estimate with~\eqref{eq:size of S k},\eqref{eq:probability of E given F} and~\eqref{eq:probability of E s given F} yields
\begin{equation*}
  \P(E\,|\,F)\le \sum_{s\in \S} \P(E_s\,|\,F)\le \sum_{k=6}^{\infty}2^{2(k+1)}\exp(-2^{k-1})<1
\end{equation*}
and thus also $\P(E)<1$, as we wanted to prove.

\subsection{Proof of Lemma~\ref{lem:circle packings with slowly growing radii}}\label{sec:circle packings carried by R2} The lemma is proved in a similar manner to Lemma~\ref{lem:circle packings of unit disk} so we will be brief on some of the details.

Let $p>0$ be the constant from Theorem~\ref{thm:percolation for circle packing}. Let $\S$ be a circle packing representing a triangulation whose carrier is $\R^2$ and satisfying the assumption~\eqref{eq:radius growth} for some $c>0$ sufficiently small for the following arguments. Let $r_0\ge 3$ be large enough so that $f(r)\le 2c\, r / \log\log r$ for all $r\ge r_0$. Let $E$ be the event that $G_\S^{1-p}$ does not have an infinite connected component, so that our goal is to show that $\P(E)<1$.

Let $\mathcal{P}$ be the collection of infinite paths in $G_\S$ which contain a disk intersecting~$r_0\mathbb{D}$. Let $\mathcal{C}$ be the collection of cycles in $G_\S$ which intersect all paths in $\mathcal{P}$ (cycles which surround $r_0\mathbb{D}$).
As $\S$ represents a triangulation carried by $\R^2$ it follows that there exists a path in $\mathcal{P}$ whose disks are open in $G_\S^{1-p}$ if and only if there does not exist a cycle in $\mathcal{C}$ whose disks are closed in $G_\S^{1-p}$. For $s\in \S$ define the event $E_s$ by~\eqref{eq:E s def}, so that the relation~\eqref{eq:E contained in union of E s} again holds.

Let $\S_0$ be the set of disks intersecting $r_0\mathbb{D}$. Let $\S_{m,k}$ be the set of disks in $\S$ whose radius is in $(2^{m-k-1}r_0/\log(m+1),\, 2^{m-k} r_0/\log(m+1)]$ and which intersect the annulus $2^m r_0\mathbb{D}\setminus 2^{m-1} r_0\mathbb{D}$. For each $m\ge 1$, the definition of $r_0$ implies that $\S_{m,k}$ is empty for $k< C$ for some large integer $C$ depending only on $c$ and tending to infinity as $c$ tends to zero. Thus $\S$ is the union of $\S_0$ and the sets $\S_{m,k}$ with $m\ge 1$ and $k\ge C$. In addition, area considerations imply that
\begin{equation}\label{eq:S k m bound}
  |\S_{m,k}| \le 2^{2(k+2)}\log(m+1)^2.
\end{equation}


Let $F$ be the event that all disks in $\S_0$ are open. As $\S_0$ is finite, the probability of $F$ is positive whence it is enough to show that $\P(E\,|\,F)<1$. We now have
\begin{equation}\label{eq:E F big sum}
  \P(E\,|\,F) \le \sum_{m=1}^\infty\sum_{k=C}^\infty \sum_{s\in \S_{m,k}}\P(E_s \,|\, F).
\end{equation}
Let $m\ge 1, k\ge C$ and $s\in\S_{m,k}$. Observe that, as $C$ is large, any cycle $C\in\mathcal{C}$ containing $s$ as a disk of maximal radius must also contain a disk $s'$ with $d(s,s')\ge 2^{m-2}r_0$. Thus, applying Theorem~\ref{thm:percolation for circle packing}, with $s_0 = s$ and the circle packing consisting of the disks in $\S$ whose radius is at most that of $s$ and which are not in $\S_0$, implies that
\begin{equation}\label{eq:E s F bound}
  \P(E_s\,|\,F)\le \exp(-2^{k-2}\log(m+1)).
\end{equation}
Combining~\eqref{eq:S k m bound},~\eqref{eq:E F big sum} and~\eqref{eq:E s F bound} we conclude that
\begin{equation*}
  \P(E \,|\, F) \le \sum_{m=1}^\infty\sum_{k=C}^\infty 2^{2(k+2)}\log(m+1)^2 \exp(-2^{k-2} \log(m+1))<1
\end{equation*}
when $C$ is a sufficiently large absolute constant, as we wanted to prove.

%

\subsection{Proof of Corollary~\ref{cor:square tiling of square}} The proof is again a variation on the proof of Lemma~\ref{lem:circle packings of unit disk}.

Let $p>0$ be the constant from Theorem~\ref{thm:percolation for general packings} for a packing of squares in $\R^2$. Let $\S$ be a packing of finitely many squares in the unit square. Let $E$ be the event of a top-bottom crossing of the unit square in $G_\S^p$. We need to bound the probability of $E$ away from one uniformly in $\S$.

Let $\mathcal{C}$ be the collection of paths in $G_\S$ which connect the top of the unit square to its bottom. For a square $s\in\S$, define the event
\begin{equation*}
  E_s:=\left\{\textstyle
    \begin{array}{c}
      \text{there exists $C\in\mathcal{C}$ whose squares are open in $G_\S^p$}\\
      \text{such that $s\in C$ and $s$ has maximal diameter among the squares of $C$}
    \end{array}\right\}.
\end{equation*}
Let $\S_k$ be the set of squares in $\S$ with diameter in $(2^{-(k+1)}, 2^{-k}]$. By area considerations,
\begin{equation}\label{eq:bound on number of squares}
  |\S_k| \le 2^{2(k+1)}.
\end{equation}
Let $F$ be the event that all squares in $\cup_{k=0}^5\S_k$ are closed. As the number of squares in this collection is at most an absolute constant by~\eqref{eq:bound on number of squares}, the probability of $F$ is at least an absolute constant. Thus it suffices to bound $\P(E\,|\,F)$ from one, uniformly in $\S$. We use that
\begin{equation}\label{eq:P E F squares}
  \P(E\,|\,F)\le \sum_{k=0}^\infty \sum_{s\in\S_k} \P(E_s\,|\, F) = \sum_{k=6}^\infty \sum_{s\in\S_k} \P(E_s).
\end{equation}
Lastly, if $s\in\S_k$ for some $k\ge 6$ then any cycle $C\in\mathcal{C}$ containing $s$ as a square of maximal diameter must contain another square $s'\in\S$ with $d(s,s')\ge \frac{1}{4}$. Thus, combining Theorem~\ref{thm:percolation for general packings} (for each $s\in\S_k$ it is applied with $s = s_0$ and the square packing $\cup_{m=k}^\infty\S_m$) and the bounds~\eqref{eq:bound on number of squares} and~\eqref{eq:P E F squares} leads to
\begin{equation*}
  \P(E\,|\,F)\le \sum_{k=6}^\infty 2^{2(k+1)}\exp(-2^{k-2})
\end{equation*}
which is bounded away from one uniformly in $\S$, as we wanted to prove.


\section{Circle packings of Benjamini--Schramm limits of finite planar graphs}
In this section we prove Lemma~\ref{lem:circle packing with at most one accumulation point}. The proof is encumbered by technical details so for the reader's convenience we briefly describe its steps here: (i) Embed each $G_n$ inside a finite \emph{triangulation} $\bar{G}_n$ in a way that the degrees in $\bar{G}_n$ of the vertices of $G_n$ are controlled. (ii) Use the circle packing theorem to obtain a circle packing of each $\bar{G}_n$. Translate and dilate the obtained circle packing so that a uniformly chosen root disk becomes the unit disk. (iii) For each $r$, use the assumption of Benjamini--Schramm convergence and the ring lemma (as $\bar{G}_n$ is a triangulation) to obtain tightness for the centers and radii of the disks of the circle packing which correspond to vertices of $G_n$ and are at graph distance at most $r$ from the root disk. (iv) Use the obtained tightness to extract, along a subsequence, a limiting circle packing of the disks corresponding to vertices of $G_n$. (v) The magic lemma of Benjamini--Schramm implies that the limiting circle packing has at most one accumulation point. (vi) Argue that the limiting circle packing represents the graph $G$ (for this last step we place additional restrictions on the triangulations $(\bar{G}_n)$).

\subsection{Ingredients}
We start by describing several tools which are used in the proof.
\subsubsection{Convergence of circle packings}\label{sec:convergence of circle packings}
Call a circle packing $\S$ \emph{connected} (\emph{locally finite}) if its underlying graph $G_\S$ is connected (locally finite). A rooted circle packing is a pair $(\S, s)$ with $\S$ a circle packing and $s$ one of the disks in $\S$. For a locally finite rooted circle packing $(\S,s)$ and integer $r\ge 0$, write $B_\S(s,r)$ for the induced subgraph of $G_\S$ on the disks whose graph distance to $s$ is at most $r$, rooted at $s$. We proceed to define a notion of convergence for a sequence of rooted circle packings.

Let $(\S_n, s_n)$ be a sequence of finite, connected, rooted circle packings. Say that $(\S_n, s_n)$ \emph{converge locally} to the triple $((G,\rho), \S, \tau)$, where $(G,\rho)$ is a locally finite connected rooted graph, $\S$ is a circle packing and $\tau$ is a bijection of the vertices of $G$ and the disks of $\S$, if
\begin{enumerate}
  \item (local graph convergence) There is a non-decreasing sequence of integers $(r_n)$ tending to infinity such that for each $n$ there exists an isomorphism $I_n$ between the rooted graphs $B_{\S_n}(s_n, r_n)$ and $B_G(\rho, r_n)$.
  \item (convergence of disks) For each vertex $v$ in $G$, the disk $I_n^{-1}(v)$ (well defined for large $n$) converges as $n$ tends to infinity to a non-trivial disk (i.e., its center converges to a point in $\R^2$ and its radius converges to a number in $(0,\infty)$). The mapping $\tau$ takes $v$ to the limiting disk and $\S$ is the set of all $\tau(v)$, $v\in G$ (it is straightforward that $\S$ is a circle packing).
\end{enumerate}
It is pointed out that this convergence does not force the graph $G_\S$ of the circle packing $\S$ to coincide with $G$. Indeed, adjacency of $v$ to $w$ in $G$ implies that $\tau(v)$ is tangent to $\tau(w)$, but the converse implication may fail in general. Moreover, the graph $G_\S$ may even fail to be locally finite (e.g., it is possible that $G_{\S_n}$ is a path of length $n$ and $G_\S$ is a star graph with infinitely many `arms' of length $1$). In general, we thus have only  that $(G_\S,\tau(\rho))$ contains $(G,\rho)$ as a rooted subgraph.

We now extend the above convergence notion to \emph{random} rooted circle packings. A sequence of random finite, connected, rooted circle packings $(\S_n, s_n)$ is said to \emph{converge locally in distribution} to the random triple $((G,\rho), \S, \tau)$ with $(G,\rho), \S, \tau$ random objects of the above types if there exists a coupling of the $(\S_n, s_n)$ and $((G,\rho), \S, \tau))$ so that, almost surely under this coupling, $(\S_n, s_n)$ converges locally to $((G,\rho), \S, \tau)$.

The following \emph{tightness condition} will be of use. Let $(\S_n, s_n)$ be a sequence of random, finite, connected rooted circle packings. Let $(G,\rho)$ be a random locally finite connected rooted graph $(G,\rho)$. Suppose that (i) for each integer $r\ge 0$ and each rooted graph $(H,\sigma)$ it holds that $\P(B_{\S_n}(s_n, r)=(H,\sigma))\to \P(B_G(\rho, r)=(H,\sigma))$, (ii) for each $r\ge 0$, the centers of the disks in all $B_{\S_n}(s_n, r)$, as $n$ varies, are tight, when considered as random variables in $\R^2$, and (iii) for each $r\ge 0$, the radii of the disks in all $B_{\S_n}(s_n, r)$, as $n$ varies, are tight, when considered as random variables in $(0,\infty)$. Then, via compactness arguments (and the Skorohod representation theorem), there exists a subsequence $(n_k)$, random circle packing $\S$ and random bijection $\tau$ so that $(\S_{n_k}, s_{n_k})$ converges locally in distribution to $((G,\rho), \S, \tau)$.

\subsubsection{Applying the magic lemma}

Benjamini and Schramm proved that every Benjamini–Schramm limit of finite simple planar graphs with uniformly bounded degrees may almost surely be realized as the tangency graph of a circle packing with at most one accumulation point. The main tool in their proof is the so-called \emph{magic lemma}~\cite[Lemma 2.3]{BS01}. We now apply the magic lemma to obtain a similar conclusion in our context.

Denote the closed unit disk by $\bar{\mathbb{D}}=\{z\in\R^2\colon \|z\|\le 1\}$.
\begin{lemma}(Benjamini--Schramm limit of finite circle packings)\label{lem:magic lemma consequence}
Let $(\S_n)$ be a sequence of random finite and connected circle packings. For each $n$, let $s_n$ be a uniformly sampled disk in $\S_n$ (again, first $\S_n$ is sampled and then $s_n$ is sampled uniformly from it). Let $T_n$ be the (unique) mapping $z\mapsto az+b$ with $a>0$ and $b\in\R^2$ for which $T_n(s_n) = \bar{\mathbb{D}}$. If $(T_n(\S_n), \bar{\mathbb{D}})$ converges locally in distribution to some $((G,\rho), \S, \tau)$ then the circle packing $\S$ has at most one accumulation point in $\R^2$, almost surely.
\end{lemma}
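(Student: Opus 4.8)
The plan is to adapt the Benjamini--Schramm strategy, taking the magic lemma \cite[Lemma 2.3]{BS01} as the only quantitative input and deriving a contradiction from a single realization rather than through an expectation. I work throughout under the almost sure coupling furnished by local convergence in distribution, so that $(T_n(\S_n),\bar{\mathbb{D}})$ converges locally to $((G,\rho),\S,\tau)$ almost surely. Recall that the accumulation points of $\S$ are exactly the points of $\R^2$ every neighbourhood of which meets infinitely many of the disks $\tau(v)$, $v\in G$. The goal is to show that $\P(\S \text{ has at least two accumulation points})=0$, so I assume the contrary and fix a realization on which both this event and the local convergence hold.

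The engine is the magic lemma, which I use in the form: for every $\delta>0$ there is a finite constant $M(\delta)$, depending on $\delta$ alone, such that any finite packing of disks in the plane contains at most $M(\delta)$ disks that are \emph{$\delta$-separated} --- disks from which two families of disks of comparable-or-larger size recede in two angularly separated regions (the precise formulation being dictated by the proof of the magic lemma). The decisive feature is that $M(\delta)$ does not depend on the number of disks in the packing.

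On the fixed realization the limit packing $\S$ has two distinct accumulation points $a_1\neq a_2$; choose disjoint balls $B_1\ni a_1$ and $B_2\ni a_2$, each meeting infinitely many disks of $\S$. I would then show that this configuration forces \emph{arbitrarily many} $\delta$-separated disks in $\S$ for a single suitable $\delta=\delta(a_1,a_2,B_1,B_2)$: a disk $D$ lying deep in the cluster converging to $a_1$ sees, after rescaling $D$ to unit size, a nearby family receding towards $a_1$ and --- because such a $D$ is tiny relative to the disks gathering near $a_2$ --- a second, far family of comparable-or-larger disks towards $a_2$, which is exactly the $\delta$-separation pattern. Picking $K>M(\delta)$ such disks, all at graph distance at most some finite $r^\ast$ from the root, and using that local convergence supplies isomorphisms $I_n$ of the balls $B_{\S_n}(s_n,r_n)$ with $r_n\to\infty$ together with convergence of the individual disks, I obtain that for all large $n$ the \emph{finite} packing $T_n(\S_n)$ contains at least $K>M(\delta)$ disks that are $\delta$-separated ($\delta$-separation being an open condition, stable under small perturbations of centres and radii). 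This contradicts the magic lemma, which forbids more than $M(\delta)$ such disks in any finite packing; hence the assumed event has probability zero.

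The main obstacle is the geometric translation of the previous paragraph: one must pin down a notion of $\delta$-separation that (i) is controlled uniformly by the magic lemma and (ii) is genuinely produced, at one fixed scale, by two accumulation regions. The difficulty is that the natural scale of the witnessing disks drifts to zero as they descend towards $a_1$ while the far cluster recedes, so --- as in the original Benjamini--Schramm analysis --- one must select the witnessing disks at scales where both the near and the far families fall inside the same $\delta$-window. A secondary, technical point is that tangency in $\S$ may strictly exceed adjacency in $G$ and that $G_\S$ may even fail to be locally finite, so the witnessing disks and their neighbours must be tracked through the convergence of centres and radii rather than through the combinatorics of $G$ when they are transported to $T_n(\S_n)$.
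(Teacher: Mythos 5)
There is a genuine gap, and it lies exactly where you place your ``decisive feature.'' The magic lemma of Benjamini--Schramm (\cite[Lemma 2.3]{BS01}) does \emph{not} bound the number of $(\delta,s)$-supported points of a finite set $C\subset\R^2$ by a constant $M(\delta)$ independent of $|C|$; it bounds it by a quantity proportional to $|C|$, of the form $c\,|C|/(s\delta^2)$. No cardinality-independent bound can hold: placing $N$ far-separated translates of a single configuration that contains one supported (or, in your language, $\delta$-separated) point produces a finite packing with at least $N$ such points. Consequently your plan --- fix one realization with two accumulation points, exhibit $K>M(\delta)$ separated disks in a ball around the root, transport them to $T_n(\S_n)$ for large $n$, and contradict the magic lemma --- cannot close: finding $K$ separated disks inside the finite packing $T_n(\S_n)$ contradicts nothing, since $|T_n(\S_n)|$ may be enormous and the lemma only controls the \emph{fraction} of such disks. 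Your explicit decision to derive ``a contradiction from a single realization rather than through an expectation'' discards the one ingredient that makes the argument work.

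The paper's proof simply follows \cite[Proposition 2.2]{BS01}, whose structure is an averaging argument over the uniformly chosen root: because the root disk is uniform in the finite packing, the magic lemma gives $\P(\text{root is }(\delta,s)\text{-supported})\le c/(s\delta^2)$ for every $n$, uniformly in the (random) packing. One then shows that if the limit $\S$ had two accumulation points with probability at least $\eps$, the root would be $(\delta,s)$-supported for a fixed $\delta$ and \emph{every} $s$ with probability bounded below by some $\eps'>0$; since for each fixed $s$ this is witnessed by finitely many disks, local convergence transfers the lower bound to $T_n(\S_n)$ for large $n$, and letting $s\to\infty$ contradicts the $c/(s\delta^2)$ upper bound. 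If you restore the role of the uniform root and replace your ad hoc ``angular separation'' notion by the lemma's actual $(\delta,s)$-supported condition (a cluster of $s$ centers in a ball of radius $\delta$ times its distance to the root), the rest of your outline --- choosing the witnessing disks at a common scale and tracking them through the convergence of centers and radii rather than through $G_\S$ --- is the right technical scaffolding.
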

\begin{proof}
   The proof follows that of~\cite[Proposition 2.2]{BS01}.
\end{proof}

\subsubsection{The circle packing theorem}
To make use of Lemma~\ref{lem:magic lemma consequence} we need a way to generate appropriate circle packings. This is provided by the following celebrated theorem of Koebe.
\begin{thm}(The circle packing theorem~\cite{K36},~\cite[Theorem 3.5]{N18})\label{thm:circle packing}
  For any finite planar map $G$ there exists a circle packing $\S$ which represents $G$.
\end{thm}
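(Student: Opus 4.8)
The plan is to prove Koebe's theorem by first reducing to the case of a triangulation and then solving for the radii of the disks by a variational argument. For the reduction, given an arbitrary finite (simple) planar map $G$, I would augment it to a triangulation $\widehat{G}$ by adding one new vertex inside each non-triangular face (including the outer face) and joining it to all vertices on that face's boundary, keeping the augmented map simple. If $\widehat{\S}$ is a circle packing representing $\widehat{G}$, then deleting the disks corresponding to the added vertices yields a collection of disks whose tangencies and cyclic orders are exactly those of $G$: we added edges only incident to the new vertices, so no tangencies among the original disks are created or destroyed, and the cyclic order at each original vertex is the one inherited from $\widehat{G}$. Hence it suffices to produce a packing for triangulations, and after stereographic projection it suffices to treat a triangulation of the sphere, or equivalently a triangulation of a closed disk with a distinguished outer face.

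For a triangulation $T$, I would parametrize candidate packings by a vector of radii $r=(r_v)_v\in(0,\infty)^{V}$. For a face $\{u,v,w\}$ whose three disks are mutually tangent, the centers form a Euclidean triangle with side lengths $r_u+r_v$, $r_v+r_w$, $r_w+r_u$, and the law of cosines determines the interior angle $\alpha^{uvw}_v(r)$ at $v$. Summing over the faces incident to $v$ gives the cone angle $\Theta_v(r)=\sum_{\{u,v,w\}}\alpha^{uvw}_v(r)$. A consistent planar layout of the triangles, giving an actual packing, exists precisely when $\Theta_v=2\pi$ at every interior vertex, with suitably prescribed angles at the distinguished boundary vertices; once these curvature conditions hold one develops the triangles face by face and checks, using the angle conditions, that the development closes up and is an embedding.

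To solve the system $\Theta_v=2\pi$, I would pass to logarithmic radii $x_v=\log r_v$ and invoke the Colin de Verdi\`ere variational principle: the angle functions satisfy the symmetry $\partial\Theta_v/\partial x_w=\partial\Theta_w/\partial x_v$, so the map $x\mapsto(2\pi-\Theta_v(x))_v$ is the gradient of a function that is strictly convex on the slice $\{\sum_v x_v=0\}$ (the overall scale being a free parameter since the $\Theta_v$ are scale invariant). Establishing coercivity of this functional --- equivalently, ruling out degenerations in which some radii tend to $0$ or $\infty$ --- then yields an interior minimizer whose vanishing gradient furnishes radii solving the packing equations. Alternatively, one may argue topologically, showing that the curvature map is a proper local homeomorphism (injectivity following from a discrete maximum principle) and hence surjective onto the prescribed-curvature target by invariance of domain.

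The main obstacle I anticipate is twofold. First is the analytic heart of the variational argument: verifying the symmetry of the angle derivatives (which is what makes the curvature a genuine gradient) and proving strict convexity and coercivity, which rest on quantitative estimates for how a triangle's angles vary with its three tangent radii and on controlling boundary behaviour so that the minimizer does not escape to $\partial(0,\infty)^V$. Second is the passage from a valid radius vector to an honestly embedded packing: one must show that developing the triangles around each vertex closes up globally and produces disks with pairwise disjoint interiors, which is a monodromy/degree argument using the angle conditions together with the planarity of $T$. Handling the distinguished outer face is where the boundary conditions must be chosen with care, since Gauss--Bonnet forbids a flat metric on the whole sphere with all cone angles equal to $2\pi$, forcing the special treatment of one face (or vertex) throughout.
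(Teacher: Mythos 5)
The paper does not prove this statement: it is Koebe's circle packing theorem, quoted from \cite{K36} and \cite[Theorem 3.5]{N18} and used as a black box, so there is no internal proof to compare against. Your outline is the standard Thurston--Colin de Verdi\`ere variational proof (reduce to triangulations, solve for the radii by minimizing a convex functional of the logarithmic radii subject to the cone-angle conditions, then develop the triangles), which is a legitimate and well-trodden route to the theorem --- indeed essentially the proof given in the cited reference. So the only ``comparison'' to make is that you are supplying a proof strategy where the paper supplies a citation.

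Two substantive remarks on the strategy itself. First, a concrete flaw in your reduction: adding a single new vertex inside each non-triangular face and joining it to all vertices on that face's boundary does not in general produce a \emph{simple} triangulation, because the boundary walk of a face of a simple planar map may visit a vertex more than once (for instance when the map is not $2$-connected); joining the new vertex once per occurrence on the walk creates multiple edges, while joining it once per vertex fails to triangulate the face. The standard repair --- used in this very paper in Lemma~\ref{lem:extension to triangulation} --- is to first insert a cycle of new vertices inside the face, attached to consecutive corners of the boundary walk, and only then add a hub joined to that cycle. Second, your proposal defers exactly the steps that constitute the proof: the symmetry $\partial\Theta_v/\partial x_w=\partial\Theta_w/\partial x_v$ that makes the curvature map a gradient, the strict convexity and (especially) the coercivity of the potential, where the combinatorics of a planar triangulation must enter in an essential way, and the monodromy and embeddedness argument showing the developed layout closes up into an honest packing. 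All of these can be carried out, but as written none of the listed ``obstacles'' is discharged, so what you have is a correct plan rather than a proof.
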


\subsubsection{The ring lemma}
The following lemma of Rodin and Sullivan will be used to check the tightness condition discussed in Section~\ref{sec:convergence of circle packings}.

In a circle packing, we say that the disks $s_1,\ldots, s_M$ \emph{completely surround} the disk $s_0$ if each $s_i$ is tangent to $s_0$ and $s_i$ is tangent to $s_{i+1}$ for $1\le i\le M$, where we set $s_{M+1}:=s_1$.
\begin{lemma}(The ring lemma~\cite{RS87},~\cite[Lemma 4.2]{N18})\label{lem:ring lemma}
For each integer $M>0$ there exists $c(M)>0$ for which the following holds. Suppose that $s_0, \ldots, s_M$ are disks in a circle packing and $s_1,\ldots, s_M$ completely surround $s_0$. Let $r_i$ be the radius of $s_i$. Then $r_i / r_0 \ge c(M)$ for all $i$.
\end{lemma}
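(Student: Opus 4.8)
The plan is to normalize, argue by compactness to produce a degenerate limiting ring, and then propagate the degeneracy around the ring until it collides with the fact that the ring must wind once around $s_0$. First I would normalize so that $r_0=1$ with the center of $s_0$ at the origin; the statement is invariant under similarities of the plane, so this loses nothing. Since each $s_i$ ($1\le i\le M$) is externally tangent to $s_0$, its center lies at distance $1+r_i$ from the origin, in some direction $\phi_i$ on the unit circle, and since consecutive disks $s_i,s_{i+1}$ are externally tangent their centers are at distance $r_i+r_{i+1}$. Writing $\alpha_i=\phi_{i+1}-\phi_i\in[0,2\pi)$ for the angular gap, the law of cosines in the triangle formed by the origin and the two centers yields the exact relation
\[
  \sin^2(\alpha_i/2)=\frac{r_i\,r_{i+1}}{(1+r_i)(1+r_{i+1})},
\]
and because the ring surrounds $s_0$ exactly once, $\sum_{i=1}^M\alpha_i=2\pi$. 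These two identities encode all the geometry I will use.

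Next I would suppose, toward a contradiction, that no uniform lower bound $c(M)>0$ exists. Then there is a sequence of valid $M$-rings around the unit disk $s_0$ with $\min_i r_i\to 0$, and after a rotation I may assume the minimizing disk is always $s_1$. I would regard each configuration as the point $(\phi_i,r_i)_{i=1}^M$ in the compact space $(\text{circle}\times[0,\infty])^M$, allowing $r_i=\infty$ (a disk degenerating to a half-plane) by working on the Riemann sphere, and extract a convergent subsequence with limit $(\phi_i^\ast,r_i^\ast)$ satisfying $r_1^\ast=0$. The displayed identity, the winding condition $\sum\alpha_i=2\pi$, the consecutive-tangency relations, and the disjoint-interior condition are all continuous and therefore persist in the limit (interpreted on the sphere). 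Thus the limit is a \emph{generalized ring} in which $s_1^\ast$ has collapsed to the single point $P\in\partial s_0$ at which it touched $s_0$.

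The engine of the proof is then a propagation step. From $r_1^\ast=0$ the identity forces $\alpha_1^\ast=\alpha_M^\ast=0$, so the centers of $s_2^\ast$ and $s_M^\ast$ lie in the direction of $P$; being tangent to $s_0$, both $s_2^\ast$ and $s_M^\ast$ are therefore tangent to $s_0$ \emph{at the same point} $P$. But two distinct sets externally tangent to $s_0$ at a common point $P$ have their centers on the common outward normal ray at $P$, so the one of smaller radius is contained in the other (internal tangency), violating the disjoint-interior condition unless one of the two is itself degenerate. Hence $r_2^\ast=0$ (or $r_M^\ast=0$), whereupon $\alpha_2^\ast=0$ forces $s_3^\ast$ to sit at $P$ as well, and so on. Iterating around the cycle, every $r_i^\ast=0$, hence every $\alpha_i^\ast=0$ and $\sum_i\alpha_i^\ast=0\neq 2\pi$ --- the desired contradiction. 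I would conclude that $\min_i r_i$ is bounded below by some $c(M)>0$.

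The step I expect to be the main obstacle is making the limiting argument clean in the presence of disks whose radii diverge: a disk can legitimately grow without bound even in a genuine $M$-ring, so the compactness must be set up on the Riemann sphere, and the propagation step must be checked in the case where $s_2^\ast$ or $s_M^\ast$ is a half-plane (there ``the smaller is contained in the larger'' becomes ``the tangent disk is contained in the tangent half-plane,'' which still contradicts disjointness). Once these limiting conventions are pinned down, the remainder is just the trigonometry recorded above. A more hands-on alternative, avoiding compactness, is the original Rodin--Sullivan chain estimate, which tracks the exact relation step by step to show that the radii cannot simultaneously stay small and close the ring up after $M$ tangencies; that route has the advantage of producing an explicit (if exponentially small) value of $c(M)$.
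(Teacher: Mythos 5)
The paper does not prove this lemma at all --- it is imported as a known result of Rodin--Sullivan (see also Lemma~4.2 of Nachmias's notes) --- so there is no in-paper argument to compare against; I can only assess your proof on its own terms. Your overall strategy (normalize $r_0=1$, compactify the configuration space allowing $r_i\in[0,\infty]$, extract a degenerate limiting ring, and propagate the degeneracy around the cycle until it contradicts the winding) is a legitimate and essentially standard route, and your law-of-cosines identity $\sin^2(\alpha_i/2)=\tfrac{r_ir_{i+1}}{(1+r_i)(1+r_{i+1})}$ is correct, as is the observation that tangency and disjointness of interiors are closed conditions that survive the limit.

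There is, however, a genuine gap in the propagation step, and it is not a mere technicality. Your encoding of ``the ring surrounds $s_0$ once'' as $\sum_i\alpha_i=2\pi$ with $\alpha_i:=\phi_{i+1}-\phi_i\in[0,2\pi)$ is too weak: it is satisfied by chains that do not surround $s_0$ and for which the conclusion fails. Concretely, take $s_3$ a unit disk tangent to $s_0$ at angle $0$, let $s_2$ be a small disk of radius $\rho$ in the cusp between $s_0$ and $s_3$ (tangent point at angle $\phi_2\approx\sqrt{2\rho}$), and let $s_1$ be the Descartes disk tangent to $s_0,s_2,s_3$ in the curvilinear triangle between them (tangent point $\phi_1\in(0,\phi_2)$). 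Then $s_1\sim s_2\sim s_3\sim s_1$, all are tangent to $s_0$ with disjoint interiors, $\alpha_1=\phi_2-\phi_1>0$, $\alpha_2=2\pi-\phi_2$, $\alpha_3=\phi_1>0$, so $\sum\alpha_i=2\pi$ exactly --- yet $r_1/r_0\to0$ as $\rho\to0$. Your argument must therefore break on this configuration, and it does at the step ``$\sin^2(\alpha_i^*/2)=0$ forces $\alpha_i^*=0$'': the other root $\alpha_i^*=2\pi$ is also admissible, and in the limit of the configuration above one has $r_1^*=r_2^*=0$, $r_3^*>0$ and $\alpha_2^*=2\pi$, so the angle sum remains $2\pi$ and no contradiction arises. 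The fix is to use the correct meaning of ``completely surround'': the tangent points $e^{i\phi_1},\dots,e^{i\phi_M}$ occur in positive cyclic order, equivalently each $\alpha_i$ equals the triangle angle $\angle C_iOC_{i+1}\in(0,\pi)$ (this is what holds for the flower around an interior vertex of a packed triangulation, the only case the paper uses, and note that the paper's own informal definition also omits this and is false without it). With $\alpha_i\in(0,\pi)$ the limit angles lie in $[0,\pi]$, the $2\pi$ branch disappears, and your propagation closes --- though the ``and so on around the cycle'' should be tightened: let $D$ be the set of degenerate indices, observe that the two nondegenerate petals flanking any maximal cyclic interval of $D$ are tangent to $s_0$ at the same point and hence violate disjointness, and conclude $D=\{1,\dots,M\}$, whence $\sum\alpha_i^*=0\neq2\pi$. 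With these repairs the proof is correct, albeit without the explicit constant that the chain-estimate proofs provide (which the paper does not need).
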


\subsubsection{Extension to a triangulation}
To verify the assumption of the ring lemma we will need to work with triangulations. The following lemma provides a way to extend a finite planar graph to a triangulation with control on the degrees of the vertices of the original graph as well as their new neighbors.
\begin{lemma}\label{lem:extension to triangulation}
  Let $H$ be a finite simple connected planar graph. There exists a finite simple \emph{triangulation} $\bar{H}$ such that $H$ is contained in the graph of $\bar{H}$, $\deg_{\bar{H}}(v)=3\deg_H(v)$ for all vertices $v$ of $H$ (where $\deg_G(v)$ is the degree of $v$ in the graph $G$) and $\deg_{\bar{H}}(v)=5$ for all vertices in $\bar{H}\setminus H$ which are neighbors of the vertices of $H$.
\end{lemma}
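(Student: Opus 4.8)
The plan is to fix a planar embedding of $H$ and triangulate it by a purely local ``corner'' construction, engineered so that each original vertex has its degree exactly tripled and each new vertex touching $H$ has degree exactly $5$. I will work throughout with the planar-map (rotation-system) structure of the embedding and its faces, described as in Section~\ref{sec:recurrent and transient triangulations} as equivalence classes of directed edges. I assume $H$ has at least three vertices, so that (since $H$ is simple and connected) every face is bounded by a closed walk of length at least $3$; write $\ell(f)$ for this length, i.e.\ the number of directed edges in the face $f$. The cases of one or two vertices are degenerate and are excluded (see the last paragraph).

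The construction is as follows. Keep all vertices and edges of $H$. For each face $f$ and each directed edge $\vec e\in f$, introduce a \emph{corner vertex} $c_{\vec e}$, drawn just inside $f$ alongside the undirected edge $e$, and join it to both endpoints of $e$. For each face $f$ introduce a \emph{center vertex} $x_f$ in the interior of $f$. Then add the edges $c_{\vec e}c_{\vec{e}'}$ whenever $\vec e$ precedes $\vec{e}'$ within $f$ (so the corner vertices of $f$ form a cycle $C_f$ following $\partial f$), together with $x_f c_{\vec e}$ for every $\vec e\in f$. All new edges can be drawn without crossings inside the respective faces, so $\bar H$ is a planar map; I then check it is a triangulation by listing its faces, which are of three types: an \emph{edge triangle} $(u,v,c_{\vec e})$ on the appropriate side of each edge $e=\{u,v\}$, a \emph{corner triangle} $(w,c_{\vec e},c_{\vec{e}'})$ at the vertex $w$ shared by two consecutive directed edges $\vec e,\vec{e}'$ of a face, and the interior triangles $(x_f,c_{\vec e},c_{\vec{e}'})$ subtended by the center of $f$.

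Next I verify the degree bookkeeping. An original vertex $v$ retains its $\deg_H(v)$ original neighbours and acquires one corner neighbour $c_{\vec e}$ for each directed edge $\vec e$ incident to $v$ (as head or tail); there are $2\deg_H(v)$ such directed edges and no other edges are added at $v$, giving $\deg_{\bar H}(v)=3\deg_H(v)$. A corner vertex $c_{\vec e}$, with $\vec e$ a directed version of $e=\{u,v\}$ lying in a face $f$, is adjacent exactly to $u$, to $v$, to $x_f$, and to the two corner vertices preceding and succeeding $\vec e$ along $C_f$; since $\ell(f)\ge 3$ these five neighbours are distinct, so $\deg_{\bar H}(c_{\vec e})=5$. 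The remaining new vertices are the centers $x_f$, which are joined only to corner vertices and hence are \emph{not} neighbours of any vertex of $H$; thus the degree-$5$ requirement is imposed on precisely the corner vertices, which satisfy it. Simplicity of $\bar H$ follows since corner vertices are indexed by directed edges, so the cycles $C_f$ are simple of length $\ge 3$ and no loops or parallel edges are created.

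The step requiring the most care is that $H$ need not be $2$-connected, so a face may be bounded by a non-simple walk (for instance a bridge is traversed twice by the same face, and a cut vertex is visited repeatedly). Indexing corner vertices by \emph{directed edges} rather than by (edge, face) pairs is exactly what makes the argument robust to this: a bridge contributes two distinct corner vertices, each traversal of a boundary walk yields a distinct vertex of $C_f$, and the bound $\ell(f)\ge 3$ guarantees that $C_f$ is a genuine simple polygon triangulable by $x_f$ and that each corner vertex attains degree exactly $5$. This bound holds for every face as soon as $H$ has at least three vertices, and fails only for $H=K_2$ (and trivially $K_1$), which is why those cases must be set aside. A short Euler-characteristic check $E=3V-6$ on small examples such as $C_3$ and $P_3$ can be used to confirm that the construction indeed yields a sphere triangulation with the asserted degrees.
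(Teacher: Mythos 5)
Your construction is exactly the paper's: inside each face, an inner cycle of new vertices indexed by the directed edges of the boundary walk, each joined to the two endpoints of its edge, plus a central vertex joined to the whole inner cycle, with the same degree bookkeeping ($2\deg_H(v)$ new corner neighbours per original vertex, $5$ neighbours per corner vertex). If anything you are slightly more careful than the paper, which asserts $k(f)\ge 3$ ``as $H$ is simple'' without noting that this (and indeed the conclusion of the lemma) fails for $K_1$ and $K_2$ --- precisely the degenerate cases you set aside.
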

\begin{proof}
  Draw $H$ in the plane with straight lines for edges (e.g., with a circle packing) and consider $H$ with the resulting map structure. Each face $f$ of $H$ may be represented by a directed path $\vec{e}_1^f, \ldots, \vec{e}_{k(f)}^f$, with $k(f)\ge 3$ as $H$ is simple. Write $\vec{e}_j^f = (v_j^f, v_{j+1}^f)$ so that $v_1^f, \ldots, v_{k(f)}^f, v_{k(f)+1}^f=v_1^f$ is a cycle in which the vertices need not be distinct. For each face $f$: First, draw a new cycle $w_1^f, \ldots, w_{k(f)}^f,  w_{k(f)+1}^f = w_1^f$ having distinct vertices in the region surrounded by $(v_j^f)$ in the drawing (the bounded region, unless $f$ is the `outer face'), and draw edges from $w_j^f$ to both $v_j^f$ and $v_{j+1}^f$ for $1\le j\le k$. Second, add a new vertex $u^f$ in the region surrounded by the cycle $(w_j^f)$ in the drawing and draw edges from $u^f$ to each of the $(w_j^f)$. The resulting proper drawing describes a simple triangulation which contains $H$ as a subgraph, and it is straightforward to check that the degree of each vertex of $H$ is exactly tripled in this construction. In addition, among the added vertices, the only ones neighboring those of $H$ are the $(w_j^f)$ and each of these has degree $5$ by construction.
\end{proof}

\subsection{Proof of Lemma~\ref{lem:circle packing with at most one accumulation point}}
Let $(G_n)$ be a sequence of, possibly random, finite simple planar graphs with Benjamini--Schramm limit $(G,\rho)$. Let $\rho_n$ be uniformly sampled in $G_n$.

\subsubsection{First observations} We begin with some observations. First we may, and will, assume without loss of generality that each $G_n$ is connected. Indeed, otherwise we may replace $G_n$ with $\hat{G}_n$ where the distribution of $\hat{G}_n$ is obtained by first sampling $G_n$ and then sampling any one of the connected components of $G_n$ with probability proportional to the number of vertices in the component. It is simple to check that for any $r\ge 0$, the distribution of $B_{G_n}(\rho_n, r)$ is equal to that of $B_{\hat{G}_n}(\hat{\rho}_n, r)$, where $\hat{\rho}_n$ is uniformly sampled in $\hat{G}_n$. Thus $(G,\rho)$ is also the Benjamini--Schramm limit of $(\hat{G}_n)$.

Second, write $|H|$ for the number of vertices of a graph $H$. We assume without loss of generality that $|G|=\infty$, almost surely.
Let us show that this is indeed without loss of generality. The assumption that $(G_n)$ converges to $(G,\rho)$ in the Benjamini--Schramm sense implies that there exists a coupling of $((G_n, \rho_n))_n$ and $(G_,\rho)$ so that for each integer $r\ge 0$, $B_{G_n}(\rho_n, r)$ converges to $B_G(\rho, r)$ almost surely (by the Skorohod representation theorem). Under the coupling, the event $\{|G|=\infty\}$ equals the event $\{\lim_n |G_n|=\infty\}$ (as $(G_n)$ are connected). On the event $\{|G|<\infty\}$ it is clear that $G$ can be represented by a circle packing with no accumulation points, by Theorem~\ref{thm:circle packing}. Assume that the probability of $\{|G|=\infty\}$ is positive, as otherwise the lemma follows trivially. Condition (under the coupling) on $\{|G|=\infty\}$ and note that it still holds under the conditioning that $\rho_n$ is uniformly distributed in $G_n$, as $\{|G|=\infty\}$ is independent of the choice of the $(\rho_n)$ in $(G_n)$ (it depends only on $|G_n|$, as explained above). We may now replace the distribution of each $G_n$ and the distribution of $(G,\rho)$ by their distribution conditioned on $\{|G|=\infty\}$ and preserve the property that $(G_n)$ converges in the Benjamini--Schramm sense to $(G,\rho)$, while adding the property that $|G|=\infty$ almost surely.

Third, the assumption that $|G|=\infty$ almost surely implies that $|G_n|$ converges to infinity in probability, i.e., that
\begin{equation}\label{eq:size of G_n tends to infinity}
\text{For each $M>0$,\quad $\P(|G_n|\le M)\to 0$\quad as $n\to\infty$}.
\end{equation}
In addition, the Benjamini--Schramm convergence of $(G_n)$ to $(G,\rho)$ implies that for each integer $r\ge 0$, the number of vertices of $G_n$ at distance at most $r$ from $\rho_n$ is tight as $n$ tends to infinity. In other words,
\begin{equation}\label{eq:tightness of size of r neighborhoods}
  \text{For each integer $r\ge 0$,\quad $\lim_{M\to\infty}\sup_{n\to\infty}\P(|B_{G_n}(\rho_n, r)|\ge M) = 0$}.
\end{equation}
Combining~\eqref{eq:size of G_n tends to infinity} and~\eqref{eq:tightness of size of r neighborhoods} we conclude that for each random sequence of vertices $v_n\in G_n$ with $v_n$ independent of $\rho_n$, the distance between $v_n$ and $\rho_n$ tends to infinity in probability, i.e.,
\begin{equation}\label{eq:v n out of neighborhood}
  \text{For each integer $r\ge 0$,\quad $\P(v_n\in B_{G_n}(\rho_n, r))\to 0$ as $n\to\infty$}.
\end{equation}
To see this, observe that the uniformity of $\rho_n$ (and its independence from $v_n$) implies that, for each integer $r\ge 0, M\ge 1$,
\begin{equation*}
\begin{split}
  \P(v_n\in B_{G_n}(\rho_n, r)\, |\, G_n, v_n) &= \P(\{v_n\in B_{G_n}(\rho_n, r)\}\cap\{|B_{G_n}(v_n, r)|\ge M\}\, |\, G_n, v_n)\\
   &+ \P(\{v_n\in B_{G_n}(\rho_n, r)\}\cap\{|B_{G_n}(v_n, r)|< M\}\, |\, G_n, v_n)\\
   &\le \P(|B_{G_n}(\rho_n, 2r)|\ge M\, |\, G_n, v_n) + \min\left\{\frac{M}{|G_n|}, 1\right\}.
\end{split}
\end{equation*}
Averaging over $G_n$ and $v_n$ we see that both terms can be made as small as we like by choosing first $M$ large and then $n$ large, by~\eqref{eq:tightness of size of r neighborhoods} and~\eqref{eq:size of G_n tends to infinity}.

\subsubsection{Circle packings}
We proceed to make use of the circle packing theorem. For each $n$, let $\bar{G}_n$ be the extension of $G_n$ to a (random) finite simple triangulation given by Lemma~\ref{lem:extension to triangulation}. Apply the circle packing theorem, Theorem~\ref{thm:circle packing}, to $\bar{G}_n$ to obtain a (random) circle packing $\bar{S}_n$. Let $\S_n$ be the subset of $\bar{S}_n$ of the disks corresponding to $G_n$ (chosen arbitrarily if there is more than one correspondence). Let $s_n$ be uniformly sampled from the disks of $\S_n$. Let $T_n$ be the unique mapping $z\mapsto az+b$, with $a>0$ and $b\in\R^2$, for which $T_n(s_n) = \bar{\mathbb{D}}$. We will establish that
\begin{enumerate}
  \item There is a subsequence $(n_k)$ for which $(T_{n_k}(\S_{n_k}), \bar{\mathbb{D}})$ converges locally in distribution to $((G,\rho), \S, \tau)$ with $(G,\rho)$ the Benjamini--Schramm limit of $(G_n)$, $\S$ a (random) circle packing and $\tau$ a bijection of the vertices of $G$ and the disks of $\S$.
  \item Almost surely, $G = G_\S$.
\end{enumerate}
The two statements suffice to finish the proof as Lemma~\ref{lem:magic lemma consequence} implies that $\S$ has at most one accumulation point in $\R^2$.

\subsubsection{Tightness}
We first prove the existence of $(n_k)$ for which $(T_{n_k}(\S_{n_k}), \bar{\mathbb{D}})$ converges locally in distribution. We employ the tightness condition discussed in Section~\ref{sec:convergence of circle packings}. For brevity, write $B_{n,r}:=B_{T_n(\S_n)}(\bar{\mathbb{D}}, r)$.

The fact that for each integer $r\ge 0$ and each rooted graph $(H,\sigma)$ the convergence $\P(B_{n,r}=(H,\sigma))\to \P(B_G(\rho, r)=(H,\sigma))$ holds is equivalent to our assumption that $(G,\rho)$ is the Benjamini--Schramm limit of $(G_n)$.

To see that for each $r$ the centers and radii of the disks in $(B_{n,r})$, as $n$ varies, are tight we make use of the ring lemma, Lemma~\ref{lem:ring lemma}, applied to the larger circle packing $\bar{\S}_n$. Fix an integer $r\ge 0$. As $\bar{G}_n$ is a triangulation, every disk in $T_n(\bar{\S}_n)$, besides three disks $b_n^1, b_n^2, b_n^3$ (which border the `outer face'), is completely surrounded by other disks in $T_n(\bar{\S}_n)$. The three disks, or a subset of them, may belong to the smaller circle packing $\S_n$ but in any case they are unlikely to be in the neighborhood $B_{n,r}$. Precisely, by~\eqref{eq:v n out of neighborhood},
\begin{equation}\label{eq:boundary is far}
  \P(\{b_n^1, b_n^2, b_n^3\}\cap B_{n,r}\neq \emptyset) \to 0\quad\text{as $n\to\infty$}.
\end{equation}
On the event in~\eqref{eq:boundary is far}, the ring lemma may be used for each of the disks in $B_{n,r}$, when considered inside the larger circle packing $T_n(\bar{\S}_n)$, by considering a shortest path in $B_{n,r}$ from the disk to $\bar{\mathbb{D}}$. Write $\bar{\Delta}_{n,r}$ for the maximal degree of the disks of $B_{n,r}$ when considered in the graph $G_{T_n(\bar{\S}_n)}$ and $\Delta_{n,r}$ for the maximal degree of the same disks when considered in the graph $G_{T_n(\S_n)}$. Lemma~\ref{lem:extension to triangulation} shows that $\bar{\Delta}_{n,r} = 3\Delta_{n,r}$. Thus the ring lemma, together with~\eqref{eq:boundary is far}, implies that the centers and radii of the disks in $B_{n,r}$ are tight if $\Delta_{n,r}$ is tight. This latter statement now follows from~\eqref{eq:tightness of size of r neighborhoods}.


\subsubsection{Graph of the limiting circle packing} It remains to prove that $G = G_\S$ almost surely. The discussion in Section~\ref{sec:convergence of circle packings} already shows that $G$ is a subgraph of $G_\S$, so we need only show that if $v,w\in G$ are non-neighbors, then their disks $\tau(v), \tau(w)\in\S$ are non-tangent. This in turn is implied by the following claim: For each integer $r\ge 0$, the minimal distance between two non-tangent disks in $B_{n,r}$ is a tight random variable in $(0,\infty)$, as $n$ tends to infinity. To see the claim, introduce the intermediate circle packing $\hat{\S}_n$, consisting of the disks of $\S_n$ and the disks of $\bar{\S}_n$ which are tangent to them. As explained in the previous section, the disks in $B_{n,r}$ are completely surrounded by disks in $T_n(\hat{\S}_n)$, with probability tending to one as $n$ tends to infinity. These surrounding disks can act as a `barrier', preventing two non-tangent disks of $B_{n,r}$ from coming too near to each other. With this idea, the above claim will follow once we show that the minimal radius of a disk in $B_{T_n(\hat{\S}_n)}(\bar{\mathbb{D}}, r+1)$ is tight in $(0,\infty)$ as $n$ tends to infinity. The proof is similar to that of the previous section, making use of two facts: (i) Each disk in $\hat{\S}_n\setminus \S_n$ is tangent to exactly 5 other disks in $\bar{\S}_n$ by Lemma~\ref{lem:extension to triangulation}, and (ii) if a boundary disk $b_n^1, b_n^2$ or $b_n^3$ belongs to $T_n(\hat{\S}_n\setminus \S_n)$ and is also in $B_{T_n(\hat{\S}_n)}(\bar{\mathbb{D}}, r+1)$ then one of its 5 neighbors belongs to $B_{n,r+2}$, which is unlikely by~\eqref{eq:v n out of neighborhood}.

\section{Discussion, open questions and conjectures}\label{sec:discussion and open questions}
\subsection{Percolation on circle packings} Theorem~\ref{thm:percolation for circle packing} shows that there exists $p>0$ such that for any circle packing, there is zero probability for the origin to be connected to infinity by open disks after site percolation with parameter $p$. What is the largest value of $p$ for which this statement holds? The example of the triangular lattice shows that $p$ cannot exceed $1/2$. The following conjecture states, with the notation of Section~\ref{sec:circle packings}, that this bound is tight.
\begin{conj}\label{conj:p=1/2}
  Let $\S$ be a circle packing and $s_0\in \S$. Then
  \begin{equation}
    \P(E_{\S, 1/2}(s_0, \infty))=0.
  \end{equation}
\end{conj}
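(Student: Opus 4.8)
The plan is to sharpen the multi-scale induction behind Theorem~\ref{thm:percolation for circle packing} all the way to the critical value $p=1/2$. The obstruction to doing so within the present framework is structural: the union bounds and the van-den-Berg--Kesten inequality used there are intrinsically lossy, so no fine-tuning of the constants $m_0$ and $p$ will reach $1/2$. The extremal example---the triangular lattice of equal disks---attains $p_c=1/2$ precisely because of the \emph{self-matching} duality of site percolation on planar triangulations at $p=1/2$, and any argument reaching the sharp constant must exploit this planar duality rather than mere path counting. Accordingly, I would abandon the pure induction in favour of a renormalization scheme organized scale-by-scale around duality.

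First I would reduce to circle packings whose tangency graph is a triangulation with carrier $\R^2$. Enriching a given packing by filling its interstices can only \emph{increase} connectivity, so $\P(E_{\S,1/2}(s_0,\infty))\le\P(E_{\S',1/2}(s_0,\infty))$ for the enriched packing $\S'$; making this enrichment rigorous (either as a genuine packing or as an abstract planar triangulation carrying the ambient geometry) is a technical point to be handled with care. On such a triangulation site percolation enjoys exact duality: for a dyadic $\ell^\infty$ annulus $A_R$ lying at ambient distance between $R$ and $2R$ from $s_0$, the existence of an \emph{open radial crossing} of $A_R$ is complementary to the existence of a \emph{closed surrounding circuit} in $A_R$, i.e.\ a cycle of closed, pairwise-tangent disks separating $s_0$ from infinity. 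At $p=1/2$ the open/closed colour swap is measure-preserving, which is the source of the sharp constant.

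The heart of the proof would then be an RSW-type theory for circle packings: uniform crossing estimates showing that $\P(\text{open radial crossing of }A_R)\le 1-c$ for an absolute $c>0$, uniformly over the packing, over $R$, and over $s_0$. Granting such bounds, the dual events that $A_{R_j}$ contains a closed surrounding circuit, taken along a geometrically spaced sequence of scales $R_j$, each have probability at least $c$; treating the finitely many anomalously large disks separately by the area estimates used in the proof of Theorem~\ref{thm:percolation for circle packing}, and controlling the residual cross-scale dependence they create, a Borel--Cantelli--type argument forces a closed surrounding circuit at some scale almost surely. A single such circuit blocks every open path from $s_0$ to infinity, yielding $\P(E_{\S,1/2}(s_0,\infty))=0$.

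The main obstacle, and the reason the statement remains a conjecture, is exactly the RSW input. Classical RSW rests on the homogeneity and exact symmetries of a lattice, neither of which survives in an arbitrary circle packing: radii may vary without bound, the tangency graph is combinatorially irregular, and the ambient-distance notion of ``reaching infinity'' does not align with graph distance, so a surrounding \emph{graph} cycle must additionally be shown to surround in $\R^2$ (it is here that interstices between disks make the blocking property delicate, and where the reduction to triangulations earns its keep). One would need an RSW mechanism relying only on planarity, the FKG inequality, and self-matching duality---ideally in the spirit of the coarse conformal uniformization motivating Benjamini's conjectures---rather than on any rigid symmetry. Establishing such scale-free, symmetry-free crossing estimates for inhomogeneous planar triangulations is the crux; absent it, pinning down the \emph{exact} value $1/2$ (as opposed to the universal $p>0$ of Theorem~\ref{thm:percolation for circle packing}) appears out of reach, mirroring the well-known difficulty of proving absence of percolation \emph{at} criticality even on Euclidean lattices.
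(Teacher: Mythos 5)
This statement is a \emph{conjecture} in the paper (Conjecture~\ref{conj:p=1/2}); the paper offers no proof of it, and indeed explicitly notes that it is open even in the nicest setting of a circle packing representing a triangulation with carrier $\R^2$ and radii bounded away from $0$ and $\infty$. Your proposal is therefore not comparable to a proof in the paper, and more importantly it is not a proof: you correctly identify that the entire argument hinges on an RSW-type input --- uniform annulus-crossing bounds $\P(\text{open radial crossing of }A_R)\le 1-c$ at $p=1/2$, uniform over arbitrary (inhomogeneous, scale-free) circle packings --- and you state yourself that this input is not available. That is precisely the open problem; everything downstream of it (duality between open radial crossings and closed surrounding circuits in a triangulation, geometric scales, Borel--Cantelli) is standard machinery that nobody doubts. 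So the proposal is a reasonable research program, not an argument, and the gap is exactly where you place it.

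Two further points where even the program as stated is incomplete. First, the reduction to triangulations by ``filling interstices'' is more delicate than a technical point: the conjecture allows packings with accumulation points and disks tangent to infinitely many others, and an enrichment to a locally finite plane triangulation carrying the ambient metric need not exist; moreover the paper's remark that the conjecture is open already for triangulations with carrier $\R^2$ and uniformly bounded radii shows the reduction buys nothing --- the crux survives it untouched. Second, your Borel--Cantelli step needs the closed-circuit events at distinct scales to be sufficiently independent (or at least positively correlated in a usable way); in an arbitrary packing a single huge disk can intersect infinitely many of the annuli $A_{R_j}$, so the ``finitely many anomalously large disks'' dichotomy does not hold uniformly and the cross-scale dependence you wave at is itself a nontrivial obstacle. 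As written, the proposal should be read as an (honest and sensible) discussion of why the conjecture is hard, not as progress toward proving it.
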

The conjecture is similar in spirit to Conjecture~\ref{conj:square tiling} of Benjamini. We emphasize that Conjecture~\ref{conj:p=1/2} does not make any assumptions on the circle packing but point out that it is open and interesting also when the circle packing represents a triangulation, has carrier $\R^2$ and uses disks whose radii are uniformly bounded from zero and infinity.

If the conjecture is verified then it would follow that site percolation with $p=1/2$ on the following classes of graphs has no infinite connected component almost surely (using the same proofs as the corresponding statements here): (i) Graphs represented by locally finite circle packings with at most countably many accumulation points (as in Corollary~\ref{cor:circle packing accumulation points}). (ii) Recurrent simple plane triangulations (as in part~\eqref{item:recurrent plane triangulations} of Corollary~\ref{cor:percolation on plane triangulations}), giving a positive answer to Question~\ref{ques:recurrent triangulations} of Benjamini. (iii) Benjamini--Schramm limits of, possibly random, finite simple planar graphs (as in Corollary~\ref{cor:percolation on Benjamini Schramm limits}).

\subsection{Exponential decay} The second part of Theorem~\ref{thm:percolation for circle packing} states that if the radii of the disks in the circle packing are uniformly bounded above then the probability in the site percolation that the origin is connected to distance $r$ decays exponentially in $r$, at a rate which is uniform in the circle packing. Such a statement cannot hold at $p=1/2$, again due to the example of the triangular lattice for which this value of $p$ is critical. On transitive graphs, exponential decay of connection probabilities has been shown to hold in the sub-critical regime of percolation~\cites{M86,AB87,DCT16,DCRT19}. If Conjecture~\ref{conj:p=1/2} is verified, then it is natural to conjecture also that exponential decay holds for all $p<1/2$, uniformly in circle packings whose disks have bounded above radii.
\begin{conj}\label{conj:uniform exponential decay}
  There exists $f:(0,1/2)\to(0,\infty)$ such that the following holds. Let $0<p<1/2$. Let $\S$ be a circle packing and $s_0\in \S$. Assume that $D:=\sup_{s\in\S}\diam(s)<\infty$. Then for each~$r>0$,
  \begin{equation}\label{eq:conj bounded above circle packing}
    \P(E_{\S, p}(s_0, r))\le \exp\left(-f(p)\frac{r}{D}\right).
  \end{equation}
\end{conj}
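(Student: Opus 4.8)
The plan is to prove Conjecture~\ref{conj:uniform exponential decay} by re-running the self-improving multiscale induction behind the proof of Theorem~\ref{thm:percolation for general packings}, while replacing its crude base-scale input with a genuinely subcritical crossing estimate valid at every $p<1/2$. As in Section~\ref{sec:percolation on square packings}, I would first dilate so that the diameters lie in $[1,D]$ and then pass to the dyadic classes $\Sigma^k$ with $2^{k-1}<D\le 2^k$; it then suffices to prove $\alpha(k,r)\le \exp(-f(p)\,r/2^{k-1})$ for all $k$ and $r$, since $2^{k-1}<D$ turns this into the claimed rate $f(p)/D$. The inductive skeleton would be left intact: split $\S$ into the small disks $\S^{k-1}$ (diameters in $[1,2^{k-1}]$) and the large disks $\S_{k-1}^k$ (diameters in $(2^{k-1},2^k]$), decompose a long open connection either as one realized among small disks (controlled by the scale-$(k-1)$ hypothesis) or as a relay through some large disk $s$, which by the van~den~Berg--Kesten inequality~\cite{BK85} factorizes into a short connection from $s_0$ to a neighbour of $s$ times a long connection out of $s$, and sum the relays using the packing count~\eqref{eq:vol estimates general packing}.

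The single place where $p<1/2$ must enter quantitatively is the base scale $k=0$, where all disks have diameter $1$ and the tangency degree is therefore bounded; here the existing proof uses only the union bound $p(8p)^{\lceil r\rceil+1}$, which decays exponentially only when $8p<1$. To cover all $p<1/2$ I would instead need, uniformly over unit-diameter packings, an estimate $\P(E_{\S,p}(s_0,r))\le \exp(-g(p)\,r)$ with $g(p)>0$ for every $p<1/2$. Obtaining this is where I expect the real difficulty to lie, and it is the main obstacle. On a fixed transitive lattice one would invoke sharpness of the phase transition (Menshikov; Aizenman--Barsky; Duminil-Copin--Tassion), but here there is neither transitivity nor self-similarity and the critical parameter is not even known to be well defined for an individual packing. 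My preferred route would be to first establish Conjecture~\ref{conj:p=1/2} (that $p_c\ge 1/2$ uniformly across packings) and then run an OSSS/randomized-algorithm or differential-inequality argument directly on the geometric crossing events $E_{\S,p}(s_0,r)$, so as to upgrade the strict inequality $p<1/2$ into a positive rate. The crux is to make this sharp-threshold step \emph{uniform} over the entire family of packings, so that $g(p)$, and hence $f(p)$, depends on $p$ alone and not on the geometry of $\S$.

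A secondary, and I expect routine, task is to propagate constants through the dyadic induction so that the final rate is honestly of the form $f(p)/D$. The relay sum analogous to~\eqref{eq:final bound general packing} must be dominated with room to spare, which forces $f(p)$ to be chosen somewhat smaller than the base rate $g(p)$ and, when $p$ is close to $1/2$, may require a scale gap wider than a single dyadic level before the geometric series in the relay count converges. I would organize this exactly as in the existing proof's ``choose $m_0$ large, then $p$ small'' step, now reading ``choose the scale gap large, then $f(p)$ small,'' both as explicit functions of $g(p)$. Once the uniform unit-scale estimate $\exp(-g(p)\,r)$ is in hand, closing the induction and recovering~\eqref{eq:conj bounded above circle packing} should present no essential difficulty.
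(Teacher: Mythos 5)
The statement you are addressing is Conjecture~\ref{conj:uniform exponential decay}; the paper does not prove it and explicitly records it as open --- Theorem~\ref{thm:percolation for circle packing} verifies it only for one sufficiently small value of $p$, and Section~\ref{sec:discussion and open questions} presents the full range $p<1/2$ as a conjecture whose resolution is expected to require first settling Conjecture~\ref{conj:p=1/2}. Your proposal is honestly structured as a reduction rather than a proof, and the reduction leaves the entire content of the statement unestablished: everything hinges on a base-scale input $\P(E_{\S,p}(s_0,r))\le e^{-g(p)r}$ with $g(p)>0$ for every $p<1/2$, \emph{uniformly} over all unit-diameter packings. No such estimate exists in the literature or in the paper. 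The sharpness machinery you invoke (Menshikov, Aizenman--Barsky, Duminil-Copin--Tassion, OSSS) gives exponential decay below the critical point of a \emph{fixed} graph, typically under transitivity, with constants depending on that graph; here one needs a rate depending on $p$ alone across an uncountable family of non-transitive graphs whose individual critical points are not even known to be at least $1/2$ (that is exactly Conjecture~\ref{conj:p=1/2}, also open). So the crux of the statement is named but not proved.

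There is also a quantitative obstruction in the inductive step itself, for $p$ near $1/2$, which your proposed remedy of ``widening the scale gap'' does not remove. In the relay bound, a large disk $s$ adjacent (or very close) to $s_0$ contributes only $\P(E_s)\le p\cdot\alpha(k,\,r-d(s_0,s)-2^k)$, with no decay coming from $E_s^1$; the packing count gives a bounded but non-small number of such disks (the constant $136$ in the square case), so the relay sum carries a prefactor of order $136\,p\,e^{18}$ that must be pushed below $\tfrac14$ to close the induction. That forces $p$ small. Enlarging the ratio between consecutive diameter scales makes this worse, not better: the number of large disks within $O(1)$ multiples of the large scale from $s_0$ grows like the square of that ratio. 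This is precisely why the paper's argument yields the theorem only for an explicit small $p$ and why the version for all $p<1/2$ remains a conjecture.
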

The fact that $D$ enters~\eqref{eq:conj bounded above circle packing} via the ratio $\frac{r}{D}$ follows by a simple scaling consideration. Theorem~\ref{thm:percolation for circle packing} verifies Conjecture~\ref{conj:uniform exponential decay} for sufficiently small $p$. Again, the conjecture is open and interesting also when the circle packing represents a triangulation, has carrier $\R^2$ and uses disks whose radii are uniformly bounded from zero.

If Conjecture~\ref{conj:uniform exponential decay} is verified then it would follow that site percolation with any $p>1/2$ on transient simple bounded degree plane triangulations has an infinite connected component almost surely (as in part~\eqref{item:transient plane triangulations} of Corollary~\ref{cor:percolation on plane triangulations}). Proving the existence of an infinite connected component at $p=1/2$ and thus verifying Conjecture~\ref{conj:transient triangulations} of Benjamini requires additional tools. One may similarly deduce an analogue of Conjecture~\ref{conj:square tiling} of Benjamini in which the squares are replaced by circles and the probability to color a circle black is strictly greater than $1/2$.

\subsection{Value of $p_c$ and scaling limit}\label{sec:value of p_c} For well-behaved circle packings, it may be that the critical probability for site percolation is exactly $1/2$. We expect this for circle packings representing a triangulation, having carrier $\R^2$ and using disks whose radii are uniformly bounded from infinity. Indeed, this will follow, together with the fact that there is no infinite connected component at $p=1/2$, from Conjecture~\ref{conj:p=1/2} and Conjecture~\ref{conj:uniform exponential decay} (as in Lemma~\ref{lem:circle packings with slowly growing radii}). For such circle packings, it may even be that the scaling limit of the $p=1/2$ percolation is the Conformal Loop Ensemble~\cites{S09, SW12} with parameter $\kappa=6$ as for the triangular lattice~\cites{S01,CN04,CN05,CN06}. Beffara~\cite{B08} discusses the question of finding embeddings of triangulations on which percolation has a conformal invariant scaling limit and makes a related conjecture~\cite[Conjecture 12]{B08}.

\begin{figure}
		\includegraphics[scale=0.3]{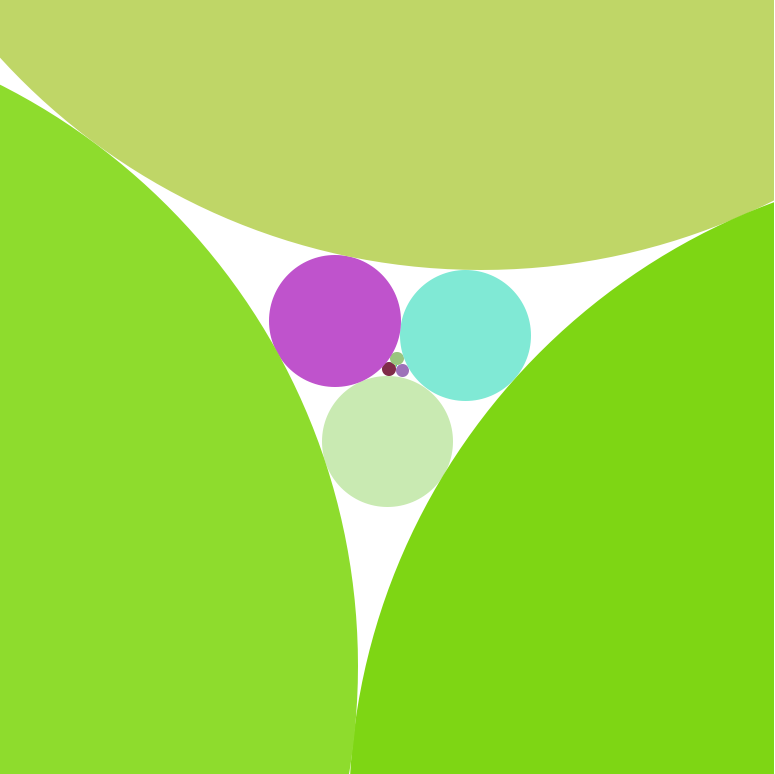}
	\caption{Part of the circle packing of the graph obtained by triangulating the product of a triangle with the natural numbers.}
	\label{fig:circle packing triangle natural numbers}
\end{figure}

We point out that the restriction that the radii are uniformly bounded above cannot be waived without a replacement. Indeed, consider the product of a triangle with the natural numbers, endowed with a natural map structure and triangulated with a diagonal edge added in every face with 4 edges. The circle packing of one such triangulation is depicted in Figure~\ref{fig:circle packing triangle natural numbers}. It is straightforward that is carried by $\R^2$ but uses disks whose radii are unbounded above, growing linearly with their distance to the origin. It is simple to see that the critical probability for site percolation on this graph is one.

\subsection{General shapes and the dependence on the aspect ratio} Theorem~\ref{thm:percolation for general packings} extends Theorem~\ref{thm:percolation for circle packing} from circle packings to $\eps$-regular packings, showing that percolation on the latter class has no infinite connected component at fixed $p$ depending only on $\eps$ and the dimension of the space. Possibly, Conjecture~\ref{conj:p=1/2} and Conjecture~\ref{conj:uniform exponential decay}, apply also for $\eps$-regular packings in $\R^2$, as long as they represent a planar graph (planarity may fail, for instance, for packings of squares in which four squares are allowed to share a corner). In Conjecture~\ref{conj:uniform exponential decay} the rate function $f(p)$ should then be allowed to depend on $\eps$. We state this explicitly for ellipse packings. Define the \emph{aspect ratio} of an elliptical disk as the ratio of lengths of the major axis and the minor axis of its bounding ellipse.
\begin{conj}\label{conj:ellipses}
  There exists $f:(0,1/2)\times[1,\infty)\to(0,\infty)$ such that the following holds. Let $M\ge 1$. Let $\S$ be a packing of closed elliptical disks of aspect ratio at most $M$. Let $s_0\in \S$. Then
  \begin{equation}
    \P(E_{\S, 1/2}(s_0, \infty))=0.
  \end{equation}
  If, in addition, $D:=\sup_{s\in\S}\diam(s)<\infty$, then for each $0<p<1/2$ and each~$r>0$,
  \begin{equation}\label{eq:conj bounded above ellipse packing}
    \P(E_{\S, p}(s_0, r))\le \exp\left(-f(p,M)\frac{r}{D}\right).
  \end{equation}
\end{conj}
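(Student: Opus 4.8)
The plan is to reduce the conjecture to a single scale-invariant input — a uniform Russo–Seymour–Welsh (RSW) box-crossing estimate at the self-dual parameter $p=1/2$ — and then to derive both assertions from it by standard renormalization. Concretely, the target estimate is: there exist $\rho>1$ and $q\in(0,1)$, depending only on the aspect-ratio bound $M$, such that for every ellipse packing $\S$ of aspect ratio at most $M$ and every axis-parallel ambient rectangle of side ratio $\rho$, the $G_\S^{1/2}$-probability of an open crossing in the \emph{long} direction is at least $q$ and in the \emph{short} direction at most $1-q$. Granting this, the conclusions follow as in the existing arguments: open crossings of $\ell_\infty$ annuli $A(2^j,2^{j+1})$ occur with probability bounded by some $q'<1$, and since distinct ellipses are retained independently these events are nearly independent across dyadic scales, so summing over $j$ yields $\P(E_{\S,1/2}(s_0,\infty))=0$. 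For the unbounded-diameter case one truncates to $\diam(s)\ge 1/n$ and passes to the limit exactly as in the proof of Theorem~\ref{thm:percolation for general packings}. For $p<1/2$, the box-crossing input is fed into a sharp-threshold / randomized-algorithm argument (OSSS, in the Duminil-Copin–Raoufi–Tassion form) to upgrade the subcritical estimate into genuine exponential decay, producing the rate $f(p,M)>0$ demanded by~\eqref{eq:conj bounded above ellipse packing}.

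Two structural inputs make $p=1/2$ the correct self-dual value. First, contact graphs of convex bodies with disjoint interiors are planar, so $G_\S$ is planar; I would restrict to the generic (planar) situation, perturbing away the degenerate configurations in which four ellipses meet at a single point — precisely the configurations the discussion flags as breaking planarity. Second, to obtain exact self-duality at $p=1/2$ I would complete the tangency graph to a triangulation of the carrier by adding combinatorial edges across non-triangular faces: since every face of a triangulation is a clique, a triangulation is its own matching graph, so its site percolation is self-dual, and an open crossing of a quadrilateral in one direction exists if and only if no closed crossing exists in the complementary direction. At $p=1/2$ the colour-flip symmetry then equates the two crossing probabilities, forcing at least one crossing direction of a square to occur with probability $\ge 1/2$. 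Care is needed to transfer crossing events between the original graph and its completion; monotonicity in the added edges, together with the matching-graph comparison, lets one sandwich the relevant probabilities.

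The heart of the proof — and the step I expect to be the main obstacle — is establishing the RSW estimate \emph{uniformly over all packings}, with no symmetry and no translation invariance at our disposal. The bounded aspect ratio gives the one piece of regularity we can exploit: an ellipse of aspect ratio $\le M$ and diameter $\delta$ has area $\ge c(M)\,\delta^2$, so $\S$ is $\eps(M)$-regular, which (as in Theorem~\ref{thm:percolation for general packings}) bounds the number of ellipses meeting any region and makes tangency-connectivity comparable to Euclidean connectivity up to the single-ellipse scale. The approach I would pursue is a Tassion-type bootstrap: self-duality supplies the base input (a square is crossed in some direction with probability $\ge 1/2$), the square-root trick and FKG glue crossings of overlapping squares, and $\eps(M)$-regularity provides the quantitative gluing geometry. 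The essential difficulty is that classical RSW (Russo, Seymour–Welsh), and even Tassion's symmetric RSW, rely on exact $\pi/2$-rotation symmetry of the underlying lattice to promote a one-directional crossing into two-sided rectangle crossings; here that symmetry is absent and must be replaced by a purely quantitative, packing-independent gluing argument. Producing such an argument in the fully inhomogeneous regime is exactly what current techniques do not supply, and it is the reason the conjecture remains open: any honest proof of the statement at $p=1/2$ — rather than at a small $p_0(M)$ via Theorem~\ref{thm:percolation for general packings} — must confront this RSW step head-on.
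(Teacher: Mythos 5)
There is a fundamental mismatch here: the statement you were asked to prove is Conjecture~\ref{conj:ellipses}, and the paper contains no proof of it --- it is stated explicitly as open, with the surrounding discussion emphasizing that even the correct dependence of $f(p,M)$ on $M$ is unknown (Harel's rectangle construction, cited in the paper, forces $f(p,M)\le \tilde{f}(p)/M^{\eps(p)}$). Your proposal is honest about this: it reduces the conjecture to a packing-uniform RSW/box-crossing estimate at $p=1/2$ and then concedes that no current technique supplies that estimate in the absence of symmetry and translation invariance. That concession is exactly right, but it means what you have written is a reduction plus a research program, not a proof; the RSW input is not a technical lemma to be filled in later but is essentially equivalent in difficulty to the conjecture itself (it already implies the $p=1/2$ statement, which the paper only knows how to prove at a small $p$ via Theorem~\ref{thm:percolation for general packings}).

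Beyond the openly acknowledged gap, two steps of your reduction would need repair even granting RSW. First, the self-duality setup is shakier than you suggest: the conjecture allows \emph{arbitrary} ellipse packings, including those with accumulation points and disks tangent to infinitely many others, where $G_\S$ need not be (a subgraph of) a plane triangulation and where $G_\S^p$ can contain infinite components for every $p>0$ (the paper notes this after Theorem~\ref{thm:percolation for circle packing}); completing to a triangulation by adding combinatorial edges changes both the open and the closed crossing events simultaneously, so a one-sided monotonicity ``sandwich'' does not transfer the crossing/blocking dichotomy back to $G_\S$, and ``perturbing away'' four-fold tangencies alters the packing rather than just the graph. Second, your claim that annulus-crossing events at dyadic scales are ``nearly independent'' fails precisely in the unbounded-diameter regime relevant to the first assertion: a single large ellipse can traverse many consecutive annuli $A(2^j,2^{j+1})$, correlating all of them through one site. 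This is exactly the difficulty the paper's proof of Theorem~\ref{thm:percolation for general packings} confronts with its two-parameter induction on scale and distance together with the van den Berg--Kesten inequality, and any completion of your program at $p=1/2$ would need an analogous multi-scale mechanism rather than independence across scales; similarly, the OSSS/sharp-threshold step for $p<1/2$ would require a differential inequality uniform over all packings of aspect ratio $\le M$, which is not provided by the cited transitive-case arguments.
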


\begin{figure}
		\includegraphics[scale=0.25, trim = 0 0 10 0, clip]{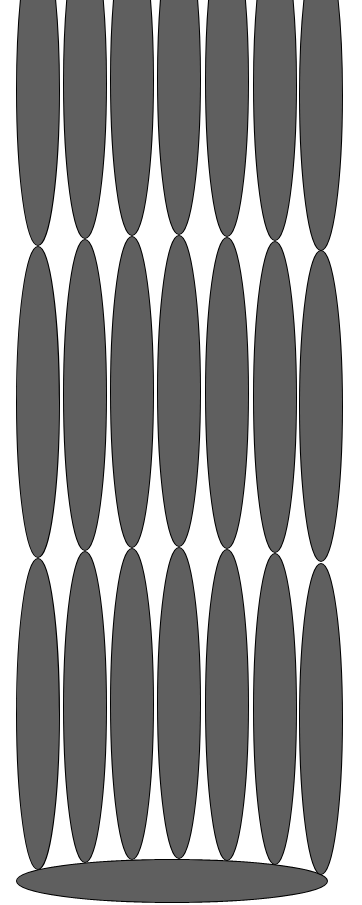}
	\caption{Part of an ellipse packing showing the effect of the aspect ratio on the rate of exponential decay.} 
	\label{fig:ellipse packing}
\end{figure}
The dependence of $f$ on the aspect ratio is unclear, even for small $p$. Consider, for instance, a packing of elliptical disks of fixed diameter $D$ and fixed aspect ratio $M$ formed as follows: Let one ellipse $s_0$ have its major axis on the $y$-axis and at approximately $M$ locations on the top part of $s_0$ start lines of infinitely many ellipses with their major axis parallel to the $x$-axis (see Figure~\ref{fig:ellipse packing}). In this case, it is straightforward that after site percolation with any $p>0$, there is chance approximately $e^{-1} p$ (for $M$ large) that $s_0$ is open and in one of the lines of ellipses above $s_0$, the first $\lfloor\log_{1/p}(M)\rfloor$ ellipses are open. On this event, $E_{\S,p}(s_0, r)$ occurs for $r\approx\lfloor\log_{1/p}(M)\rfloor D$. Thus $f(p,M)\le \frac{\tilde{f}(p)}{\log M}$ for some $\tilde{f}(p)$ and large $M$. Matan Harel~\cite{H19} has found a construction, for rectangle packings of aspect ratio at most $M$, showing that the analogous $f(p,M)$ must satisfy $f(p,M)\le \frac{\tilde{f}(p)}{M^{\eps(p)}}$ for some functions $\tilde{f},\eps$ of $p$ and large $M$. It is interesting to determine whether indeed the $f(p,M)$ depends on $M$ only through an inverse power as in Harel's construction. If proved for more general packings, such a result would be useful in the study of the loop $O(n)$ model~\cite{CGHP20}, leading to quantitative estimates in \emph{finite} volume.

When the ellipse packing represents a recurrent plane triangulation, one may use the He--Schramm theorem, Theorem~\ref{thm:He--Schramm}, to pass to a circle packing representing the same graph and then apply Theorem~\ref{thm:percolation for circle packing} to deduce the absence of infinite connected components after site percolation with the parameter $p$ of that theorem. The obtained value of $p$ is better than what one would obtain by applying Theorem~\ref{thm:percolation for general packings} directly to the ellipse packing, as the latter value depends on the aspect ratio of the elliptical disks. However, even when this route is available, one still requires additional arguments in order to derive an exponential decay estimate such as~\eqref{eq:conj bounded above ellipse packing} (even at the value of $p$ given by Theorem~\ref{thm:percolation for circle packing}), since the mapping of the ellipse packing to the circle packing introduces non-trivial distortions to the underlying metric.

\subsection{Recurrent planar graphs} In part~\eqref{item:recurrent plane triangulations} of corollary~\ref{cor:percolation on plane triangulations} we prove the absence of an infinite connected component for site percolation on simple \emph{recurrent} plane triangulations, at the value of $p$ given by Theorem~\ref{thm:percolation for circle packing}. In this result, unlike its counterpart for transient plane triangulations in part~\eqref{item:transient plane triangulations} of the corollary, the assumption that the map is a triangulation plays a technical role, allowing us to rely on the results of He--Schramm, Theorem~\ref{thm:He--Schramm}. The role of the assumption that the graph is one-ended is also unclear. Is it in fact the case that there is no infinite connected component for site percolation, at some fixed $p>0$, or even at $p=1/2$ along the lines of Conjecture~\ref{conj:p=1/2} and Benjamini's Question~\ref{ques:recurrent triangulations}, for \emph{all planar recurrent graphs}? This follows, with the $p$ of Theorem~\ref{thm:percolation for circle packing}, for subgraphs of simple recurrent \emph{plane triangulations}, by part~\eqref{item:recurrent plane triangulations} of Corollary~\ref{cor:percolation on plane triangulations}.
%
%
%
%
For multiply-ended planar recurrent graphs the following result of Gurel-Gurevich, Nachmias and Souto~\cite{GNS17} is possibly of relevance: Let $\S$ be a circle packing of a bounded degree triangulation $G$. Then $G$ is recurrent if and only if the set of accumulation points of $\S$ is polar (where a set is polar if it is avoided by two-dimensional Brownian motion, almost surely).


\subsection{Other connectivity notions}\label{sec:other connectivity notions}
For a circle packing with accumulation points, one may define notions of connectivity extending the one used here. Specifically, given a circle packing $\S$, parameter $0<p<1$ and points $x,y$ in $\R^2$ one may say that $x$ is connected to $y$ in $G^p_\S$ if $x$ and $y$ belong to the same connected component of the \emph{closure} (in $\R^2$) of the union of open disks in $G^p$. A more restricted possibility is to say that $x$ and $y$ are connected if there is a \emph{path} (in $\R^2$) connecting them in the closure of the union of open disks in $G^p$. Such definitions allow connections between $x$ and $y$ to `pass through' accumulation points of $\S$. Similar definitions may be used to define a connection between $x$ and infinity. The proof of Theorem~\ref{thm:percolation for circle packing} does not apply to these extended notions of connectivity. 

\subsection{Bond percolation} The analogue of Theorem~\ref{thm:percolation for circle packing} fails for \emph{bond} percolation. Precisely, given a circle packing $\S$ and $p\in[0,1]$, the $p$-bond-percolation process is the graph $G_\S^{p,\text{bond}}$ obtained from $G_\S$ by independently retaining each \emph{edge} of the graph with probability $p$ and discarding it with probability $1-p$. For each $p>0$ there exist circle packings $\S$ without accumulation points such that $G_\S^{p,\text{bond}}$ contains an infinite connected component almost surely. Indeed, one may start with a convenient circle packing $\S_0$, such as the periodic circle packing of the square lattice $\Z^2$, and for each integer $M>0$ create a new circle packing $\S_M$ by adding for each pair of tangent circles $s_0, s_1$ in $\S_0$ additional $M$ circles tangent to both $s_0$ and $s_1$. For each $p>0$ one may take $M$ large enough so that the graph $G_{\S_M}^{p,\text{bond}}$ will contain an infinite connected component almost surely.

\subsection{Related conjectures and results} We mention several additional conjectures and results relating to site percolation on planar graphs and the special value $p=1/2$.

\subsubsection{Isoperimetric assumptions}\label{sec:isoperimetric} Benjamini and Schramm~\cite[Conjecture 3]{BS96} conjectured that $p_c(G)\le 1/2$ for (bounded degree) plane triangulations $G$ for which $|\partial A|\ge f(|A|)\log|A|$ for some $f$ growing to infinity and all finite sets of vertices $A$ (see also~\cite[Section 2.1]{B2018}). Moreover, they conjecture that $p_c(G)<1/2$ if $G$ has positive Cheeger constant.

Georgakopoulos and Panagiotis~\cite[Section 11]{GP2018} make progress on this and other conjectures by proving that the \emph{bond} percolation critical probability is at most $1/2$ for the following classes of graphs: (i) locally finite plane triangulations satisfying the above isoperimetric assumption, (ii) bounded degree transient plane triangulations (making progress towards Benjamini's Conjecture~\ref{conj:transient triangulations}), (iii) bounded degree recurrent plane triangulations which can be represented by a circle packing whose disks have radii which are uniformly bounded above. They further improve the upper bound to $1/2-\eps(\Delta(G))$ where $\Delta(G)$ is the maximal degree in $G$ and $\eps(\Delta)\to0$ as $\Delta\to\infty$ (see~\cite[end of Section 11.2]{GP2018}). For the above graphs, they also explain how to prove the bound $p_c(G)\le 1 - \frac{1}{\Delta(G)-1}$ on the \emph{site} percolation critical probability (this bound is related to the fact that $p_c(G)\ge \frac{1}{\Delta(G)-1}$ as mentioned in Section~\ref{sec:introduction}).

\subsubsection{Degree assumptions} Benjamini and Schramm~\cite[Conjecture 7]{BS96} also conjectured that $p_c(G)< 1/2$ for every planar graph $G$ with minimal degree at least $7$, and that there are infinitely many open connected components when $p\in(p_c(G), 1-p_c(G))$.

Angel, Benjamini and Horesh~\cite[Problem 4.2]{ABH18} asked whether $p_c(G)\le 1/2$ for plane triangulations with degrees at least $6$ (and posed related questions on bond percolation and the connective constant).

Haslegrave and Panagiotis~\cite{HP19} resolve the first part of the conjecture of Benjamini and Schramm, proving that $p_c(G)<1/2$ when the minimal degree is at least $7$ (for planar graphs that have proper drawings without accumulation points in a suitable sense). They also make progress towards the conjecture of Angel, Benjamini and Horesh, proving that $p_c(G)\le 2/3$ when the minimal degree is at least $6$. In a third result they prove an upper bound for $p_c(G)$ when the minimal degree is at least $5$ and the minimal face degree is at least $4$.


\subsubsection{Volume growth and non-amenability} Benjamini~\cite[Section 2.1]{B2018} conjectures that $p_c(G)\ge 1/2$ for plane triangulations with polynomial volume growth. It is also conjectured there that $p_C(G)<1/2$ for nonamenable plane triangulations.

\subsubsection{Number of infinite connected components} A conjecture of a different nature of Benjamini and Schramm~\cite[Conjecture 8]{BS96} is that if site percolation with $p=1/2$ on a planar graph has an infinite connected component almost surely, then it has infinitely many infinite connected components almost surely. This is proved~\cite[Theorem 5]{BS96} when the planar graph admits an embedding in $\R^2$ in which the $x$-axis avoids all edges and vertices and every compact set intersects finitely many vertices and edges.

\subsubsection{Translation-invariant and unimodular graphs} Call a planar graph a \emph{planar lattice} if it has an embedding in $\R^2$ which is invariant under a full-rank lattice of translations and satisfying that every compact set intersects only finitely many vertices and edges. On planar lattices there is no infinite connected component for site percolation with $p=1/2$. This follows from the classical results of Aizenman--Kesten--Newman~\cite{AKN87} and Burton--Keane~\cite{BK89} which rule out the existence of more than one infinite connected component and theorems showing that there is no coexistence of unique open and closed infinite connected components~\cite[Theorem 14.3]{HJ06},\cite[Corollary 9.4.6]{S05}, \cite[Theorem 1.5]{DCRT19}.

The notion of \emph{unimodularity}~\cites{H97, BLPS99, BS01, AL07} of a random rooted graph can sometimes serve as a replacement for invariance properties. We do not provide the definition here (see~\cite{AL07}) but mention that Benjamini--Schramm limits (sometimes called \emph{sofic} graphs) are unimodular and it is a major open problem to determine if these are the only examples~\cite[Section 10]{AL07}. A theory of unimodular \emph{planar} maps is developed by Angel, Hutchcroft, Nachmias and Ray~\cites{AHNR16,AHNR18} and extended by Tim\'ar~\cite{T19} and Benjamini and Tim\'ar~\cite{BT19}. In~\cite[Corollary 3.2]{BT19}, ergodic unimodular plane triangulations, with finite expected degree of the root, are considered. Following Benjamini and Schramm~\cite{BS01-2}, it is shown that if such a map $G$ is nonamenable then $p_c(G)<1/2$ (and additional results regarding the number of infinite connected components).

\end{document}